\colorlet{darkblue}{blue!50!black}
\newtheorem{theorem}{Theorem}[section]
\newtheorem{lemma}[theorem]{Lemma}
\newtheorem{proposition}[theorem]{Proposition}
\newtheorem{definition}[theorem]{Definition}
\newtheorem{remark}[theorem]{Remark}
\newtheorem{hypothesis}[theorem]{Hypothesis}
\DeclareMathOperator*{\esssup}{ess\,sup}
\let\originalleft\left
\let\originalright\right
\renewcommand{\left}{\mathopen{}\mathclose\bgroup\originalleft}
\renewcommand{\right}{\aftergroup\egroup\originalright}
\renewcommand{\d}{\/\mathrm{d}\/}
\def\w{\textbf{W}^{\varepsilon}_{{\theta}^{\varepsilon}}}
\def\e{\varepsilon}
\def\L{\mathbb{L}}
\def\A{\mathrm{A}}
\def\F{\mathrm{F}}
\def\C{\mathrm{C}}
\def\f{\boldsymbol{f}}
\def\B{\mathrm{B}}
\def\D{\mathrm{D}}
\def\y{\boldsymbol{y}}
\def\E{\mathbb{E}}
\def\x{\boldsymbol{x}}
\def\k{\boldsymbol{k}}
\def\h{\boldsymbol{h}}
\def\v{\boldsymbol{v}}
\def\V{\mathbb{v}}
\def\w{\boldsymbol{w}}
\def\W{\mathrm{W}}
\def\G{\mathrm{G}}
\def\M{\mathrm{M}}
\def\N{\mathbb{N}}
\def\V{\mathbb{V}}
\def\wi{\widetilde}
\def\u{\mathrm{U}}
\def\u{\boldsymbol{u}}
\def\H{\mathbb{H}}
\newcommand{\R}{\mathbb{R}}
\renewcommand{\d}{\/\mathrm{d}\/}
\numberwithin{equation}{section}
\newcommand{\Addresses}{{% additional braces for segregating \footnotesize
		\footnote{
			%	\footnotesize
			\noindent \textsuperscript{1,2}Department of Mathematics, Indian Institute of Technology Roorkee-IIT Roorkee,
			Haridwar Highway, Roorkee, Uttarakhand 247667, INDIA.\par\nopagebreak
			\noindent  \textit{e-mail:} \texttt{Manil T. Mohan: maniltmohan@ma.iitr.ac.in, maniltmohan@gmail.com.}

			\noindent \textsuperscript{*}Corresponding author.
			
			\textit{Key words:} Stochastic convective Brinkman-Forchheimer extended Darcy equations, Approximations, Gaussian noise,  jump noise. 
			
			Mathematics Subject Classification (2020): Primary 60H15; Secondary 35R60, 35Q30, 76D05.

}}}
\begin{document}	%	\linenumbers
	
	\title[Approximations of 2D and 3D Stochastic CBFeD Equations]{Approximations of 2D and 3D Stochastic Convective Brinkman-Forchheimer Extended Darcy Equations
		\Addresses}

	\author[M. T. Mohan]
	{Manil T. Mohan\textsuperscript{*}}

	\maketitle

		\begin{abstract}
			\noindent  In this article, we consider two- and three- dimensional stochastic convective Brinkman-Forchheimer extended Darcy (CBFeD) equations 
				\begin{equation*}
				\frac{\partial \boldsymbol{u}}{\partial t}-\mu \Delta\boldsymbol{u}+(\boldsymbol{u}\cdot\nabla)\boldsymbol{u}+\alpha|\boldsymbol{u}|^{q-1}\boldsymbol{u}+\beta|\boldsymbol{u}|^{r-1}\boldsymbol{u}+\nabla p=\boldsymbol{f},\ 
				\nabla\cdot\boldsymbol{u}=0, 
			\end{equation*}
		on a torus, where $\mu,\beta>0$, $\alpha\in\mathbb{R}$, $r\in[1,\infty)$ and $q\in[1,r)$. 	The goal  is to show that the solutions of 2D and 3D stochastic CBFeD  equations driven	by Brownian motion can be approximated by 2D and 3D stochastic CBFeD equations forced by pure jump	noise/random kicks on on the state space $\mathrm{D}([0,T];\mathbb{H})$. The results are established  for $d=2,r\in[1,\infty)$  and $d=3,r\in[3,\infty)$ with $2\beta\mu\geq 1$ for $d=r=3,$ and by using less regular assumptions on the noise coefficient.
		\end{abstract}

	\section{Introduction}\label{Sec1}
	A great deal of research is being conducted to mathematically model and analyze nonlinear flows and transport processes through a porous medium. Many models of porous media are based on Darcy's law and Darcy-Forchheimer's law (cf. \cite{PAM}).  Darcy's law is an equation that describes the flow of a fluid through a porous medium, derived for slow linearly viscous flows where momentum transfer dissipation is negligible. Darcy's empirical flow model suggests that there is a  linear relationship  between flow rate and the pressure drop in a porous media,  that is, $\nabla p=-\frac{\nu}{\kappa}\u_f,$ where $\u_f$ is the Darcy velocity, $\kappa$ is the permeability of the porous medium, $\nu$  is the dynamic	viscosity of the fluid, and $p$ is the pressure.  In certain cases, nature does not abide by Darcy's law; instead, it follows Forchheimer law, which states that the relationship between the flow rate and pressure gradient is nonlinear at high velocities, and this nonlinearity increases with the increasing flow rate (cf. \cite{PAM}).  The Darcy-Forchheimer law states that $\nabla p=-\frac{\nu}{\kappa}\v_f-\gamma\rho_f|\v_f|^2\v_f$, where $\gamma>0$ is the  Forchheimer coefficient, $\v_f$ stands for the Forchheimer velocity and $\rho_f$ is the density.  Therefore, Forchheimer's law suggests that Darcy's law is still applicable, but with an additional nonlinear term to account for the increased pressure drop. In this work, we consider an approximation for the  stochastic counterpart of convective Brinkman-Forchheimer extended Darcy (CBFeD) model, which  is based on a Darcy-Forchheimer law.

	\subsection{The model and literature survey}
	Let $L>0$, $d=2,3$ and $\mathbb{T}^d=\big(\R/\mathrm{L}\mathbb{Z}\big)^{d}$, $L>0,$ $d\in\{2,3\}$  be a torus. The authors in \cite{PAM} introduced the following CBFeD  model:
	\begin{align}\label{1a}
		\left\{
		\begin{aligned}
		\frac{\partial \u}{\partial t}-\mu \Delta\u+(\u\cdot\nabla)\u+\alpha|\u|^{q-1}\u+\beta|\u|^{r-1}\u+\nabla p&=\f, \ \text{ in } \ \mathbb{T}^d\times(0,T), \\
		\nabla\cdot\u&=0, \ \text{ in } \ \mathbb{T}^d\times(0,T),
	\end{aligned}
\right.
\end{align}
	with the initial condition 
	\begin{align}\label{1c}
		\u(0)=\h, \ \text{ in } \ \mathbb{T}^d,
	\end{align}
	and $\u$ satisfies the periodic boundary conditions 
	\begin{align}\label{1d}
		\u(t,x+\mathrm{L}e_i)=\u(t,x),\ t\in[0,T],
	\end{align}
	for	every $x\in\R^d$ and $i=1,\ldots,d$, where	$\{e_1,\ldots,e_d\}$ is the canonical basis	of $\R^d$.   Here   $\u(x,t) :\mathbb{T}^d\times[0,T]\to \R^d$ denotes the velocity field  $p(x,t):\mathbb{T}^d\times[0,T]\to\R$ represents the pressure field  and  $\f(t,x) :\mathbb{T}^d\times[0,T]\to \R^d$ stands for an external forcing. For the uniqueness of pressure $p$, one can impose the condition $	\int_{\mathbb{T}^d}p(x,t)\d x=0 \ \text{ in } \ [0,T].$ The constants $\mu,\alpha,\beta$ denote the positive Brinkman  (effective viscosity), Darcy (permeability of porous medium for $q=1$) and Forchheimer (proportional to the porosity of the material) coefficients, respectively. It can be easily seen that for $\alpha=\beta=0$, one can obtain the classical $d$-dimensional Navier-Stokes equations (NSE).  In the system \eqref{1a}, the extra term $\alpha|\u|^{q-1}\u$ is  introduced to model a \emph{pumping},	when $\alpha<0$, by opposition to the \emph{damping} modeled through the term $\beta|\u|^{r-1}\u$  when $\alpha>0$.	For $q=1$ and $\alpha>0$, the system \eqref{1a} is known as convective Brinkman-Forchheimer (CBF)  equations. 
	
%	The  convective Brinkman-Forchheimer (CBF)  equations characterize the motion of incompressible fluid flows in a saturated porous medium. The CBF  model  is recognized to be more accurate when the flow velocity is too large for Darcy's law to be valid alone, and in addition, the porosity is not too small, so that  the term \emph{non-Darcy models} is used in the literature  for such types of fluid flow models (see \cite{PAM} for a discussion).  	

	 The exponent $r\in[1,\infty)$ is referred as the \emph{absorption exponent} and the case $r=3$ is known as \emph{critical exponent} and the case  $r>3$ is called  supercritical or fast growing nonlinearity (cf. \cite{KT2}). It has been established  in Proposition 1.1, \cite{KWH}  that the critical homogeneous CBF  equations have the same scaling as the  NSE only when the permeability coefficient $\alpha=0$ and no scale invariance property for other values of $\alpha$ and $r$. The \emph{tamed Navier-Stokes equations} were proposed by the authors of \cite{MRXZ}, featuring a dissipative term known as a \emph{taming function} to counterbalance the convective term in the classical 3D NSE. It can be inferred that the authors of \cite{MRXZ} have taken $r=3$, and $\mu=\beta=1$  in the first equation in \eqref{1a} with $\alpha=0$, and  they have obtained the existence of strong solution (in the analytic sense) (cf. Theorem 4.1, \cite{KWH} with $4\beta\mu\geq 1$).   For $d=2,r\in[1,\infty)$  and $d=3,r\in[3,\infty)$ ($4\beta\mu\geq 1$ for $d=r=3$), the global solvability results (the existence and uniqueness of weak and strong solutions) of deterministic CBF and CBFeD equations in bounded and periodic domains are available in the works \cite{SNA,CLF,KWH,KT2,PAM,MTM5}, etc. and the references therein. As in the case of 3D NSE, the existence of global  strong solutions for 3D CBF and CBFeD equations is not known for subcritical case $r\in[1,3)$ ($4\beta\mu<1$ for $r=3$).
	 
	 In this work, we consider the following 2D and 3D stochastic CBFeD equations  on a torus: 
	 	\begin{align}\label{1.5}
	 	\left\{
	 	\begin{aligned}
	 		\d\u&+\left[-\mu \Delta\u+(\u\cdot\nabla)\u+\alpha|\u|^{q-1}\u+\beta|\u|^{r-1}\u+\nabla p\right]\d t\\&=\F(\u)\d t+\sum\limits_{i=1}^m\sigma^i(\u)\d\W^i, \ \text{ in } \ \mathbb{T}^d\times(0,T), \\
	 		\nabla\cdot\u&=0, \ \text{ in } \ \mathbb{T}^d\times(0,T),\\
	 			\u(0)&=\h,\ \text{ in } \ \mathbb{T}^d,
	 	\end{aligned}
	 	\right.
	 \end{align}
 with the periodic condition given in \eqref{1d}. In \eqref{1.5}, $\W = \{(\W_1(t),\ldots,\W_m (t))\}_{t\geq 0}$  is an $m$-dimensional standard Brownian motion on a complete probability space $(\Omega,\mathscr{F},\{\mathscr{F}_t\}_{t\geq 0},\mathbb{P})$. The fluid is driven by external force $\F(\u)$ and the random noise $\sum\limits_{i=1}^m\sigma^i(\u(\cdot))\d\W^i(\cdot)$. 	
 
 The works \cite{HBAM,ZBGD,WLMR,MRXZ1}, etc. have established the existence and uniqueness of pathwise strong solution of the stochastic tamed NSE and related models (forced by Gaussian) in the whole space or on a torus.  The authors in  \cite{ZDRZ}  proved the existence and uniqueness of a strong solution to the stochastic 3D tamed NSE driven by multiplicative L\'evy noise, with periodic boundary conditions, through Galerkin's approximation and a type of local monotonicity of the coefficients. 	In \cite{LHGH1}, the existence of martingale solutions for stochastic 3D NSE with nonlinear damping forced by multiplicative Gaussian noise is obtained by employing a classical Faedo-Galerkin approximation and compactness method. By exploiting a monotonicity property of the linear and nonlinear operators as well as a stochastic generalization of the Minty-Browder technique, the author in \cite{MTM4,MTM6} established the existence and uniqueness of \emph{a global  strong solution} \begin{align}\label{15}\u\in\D([0,T];\H)\cap\mathrm{L}^2(0,T;\V)\cap\mathrm{L}^{r+1}(0,T;\wi\L^{r+1}), \ \mathbb{P}\text{-a.s.,}\end{align} satisfying the energy equality (It\^o's formula) for stochastic CBF equations (in bounded and periodic domains) driven by multiplicative Gaussian and pure jump noise, respectively, for $d=2,3$ and $r\in[3,\infty)$ ($2\beta\mu\geq 1$ for $d=r=3$). Under suitable assumptions on the initial data and noise coefficients, they have also showed the regularity result \begin{align}\label{16}\u\in\D([0,T];\V)\cap\mathrm{L}^2(0,T;\D(\A))\cap\mathrm{L}^{r+1}(0,T;\wi\L^{p(r+1)}),\  \mathbb{P}\text{-a.s.,}\end{align} where $p\in[2,\infty)$ for $d=2$ and $p=3$ for $d=3$. The existence and uniqueness of local and global pathwise mild solutions for stochastic CBF equations perturbed by additive L\'evy noise in $\R^d$, $d=2,3$, is established in \cite{MTM9} via the contraction mapping principle. In \cite{MTM10}, the existence of a weak martingale solution for 2D and 3D stochastic CBF equations perturbed by L\'evy noise is proved, employing the classical Faedo-Galerkin approximation, a compactness method, and a version of the Skorokhod embedding theorem for nonmetric spaces (for $d=2,3$ and $r\in[1,\infty)$). One can employ similar methods as in \cite{MTM4,MTM6} to obtain the global solvability results for the 2D and 3D stochastic CBFeD equations \eqref{1.5} with regularity given in \eqref{15} and \eqref{16} also. 
	
	\subsection{Aims, novelties, difficulties and approaches}
The major goal of this article is to study the approximations of stochastic CBFeD equations  in \eqref{1.5} by stochastic CBFeD equations  forced by Poisson random measures. One of the key motivations for studying this problem is to way a path for the numerical simulations of stochastic CBFeD equations driven by pure jump noise. The authors of \cite{GDNT} initiated work in the direction of approximating solutions of stochastic partial differential equations (SPDEs) including stochastic Burgers equation driven by Brownian motion by SPDEs forced by pure jump noise/random kicks. The works \cite{SSTZ,LHCSJX,XPRZ}, etc., extended this results to  stochastic 2D Navier-Stokes equations,  stochastic 3D Navier-Stokes equations with damping  and stochastic 3D tamed Navier-Stokes equations, respectively. 

We point out here that the approximation results obtained in the work \cite{XPRZ} for stochastic 3D tamed Navier-Stokes equations (a special case of 3D critical CBF equations with $\mu=\beta=1$) is in the state space $\D([0,T];\V)$ with an additional $\H^2$ regularity assumption on the noise coefficient (\cite[Hypothesis H5]{XPRZ}).  It should also be noted that the approximation results derived  in \cite{LHCSJX} for  stochastic 3D Navier-Stokes equations with damping  hold true for the system \eqref{1.5} with $\alpha=0$, $3<r<5$ and $4\beta\mu\geq 1$ for $r=3$ in the state space $\D([0,T];\H)$ and $\D([0,T];\V)$ with an additional $\H^2$ regularity assumption on the noise coefficient (\cite[Assumption 2.2]{LHCSJX}). In this work, using  less regular assumptions  (see Hypothesis \ref{hyp4} below) on the noise coefficient compared to \cite{LHCSJX,XPRZ} (cf. \cite[Hypothesis H.5]{SSTZ} for 2D stochastic NSE), we prove the approximation results for stochastic 2D and 3D CBFeD equations in $\D([0,T];\H)$ for $d=2$ with $r\in[1,\infty)$ and $d=3$ with $r\in[3,\infty)$ ($2\beta\mu>1 $ for $d=r=3$).  

We follow the works \cite{GDNT,SSTZ,LHCSJX,XPRZ}, etc. to obtain the main  result of this paper. In order to prove  approximations of stochastic CBFeD equations by pure jump type stochastic CBFeD equations in $\D([0,T];\H)$, we first establish the usual energy estimates (Lemmas \ref{lem3.6} and \ref{lem3.7}) under mild assumptions on the initial data ($\h\in\H$) and noise coefficient (Hypothesis \ref{hyp1} and \ref{hyp2}).   The difficulty lies in establishing the tightness of the approximating equations in the state space $\D([0,T];\H)$. Assuming that the initial data has higher regularity ($\h\in\V$), and that the coefficients of the jump noise take values in a more regular space (Hypothesis \ref{hyp4}), we  overcome this difficulty by deriving a uniform estimate of the stronger norm of the approximating solutions (Lemma \ref{lem3.8}). It should be noted that Hypothesis \ref{hyp4} is weaker than the regularity assumptions on the noise coefficients  made in \cite[Hypothesis H5]{XPRZ} and  \cite[Assumption 2.2]{LHCSJX}.  By using Aldou's criterion, we can prove the tightness of the approximating equations in $\D([0,T];\H)$ with these uniform estimates (Proposition \ref{prop3.9}). We demonstrate, through a martingale characterization and \cite[Lemma 15]{SSS}, that the limit of the solutions of approximating equations is the solution of the 2D and 3D stochastic CBFeD equations driven by Brownian motion \eqref{1.5}  for $d=2,r\in[1,\infty)$  and $d=3,r\in[3,\infty)$ with $2\beta\mu\geq 1$ for $d=r=3,$ (Theorem \ref{thm310}).  Finally, by using finite dimensional approximations and establishing uniform convergence in probability of the approximating solutions, we can remove the regularity restrictions on the coefficients and the initial condition (Theorem \ref{thm3.11}).

\subsection{Organization of the paper}
The rest of the paper is organized as follows. In the following section, we present the functional spaces and operators required to achieve the primary outcomes of this study. The main result of approximations of stochastic CBFeD equations by pure jump type stochastic CBFeD equations in $\D([0,T];\H)$ is provided in section \ref{Sec3} (Theorem \ref{thm3.11}) by using some auxiliary results on suitable energy estimates (Lemmas \ref{lem3.6}-\ref{lem3.8}), tightness property by Aldou's criterion (Proposition \ref{prop3.9}) and weak convergence in a stronger topology (Theorem \ref{thm310}).

\section{Functional Setting}\label{Sec2}\setcounter{equation}{0}
This section is devoted for providing the necessary function spaces needed for further analysis of this work. We consider the problem \eqref{1a}-\eqref{1d}  on a $d$-dimensional torus $\mathbb{T}^d$ with the periodic boundary conditions and zero-mean value constraint for the functions, that is, $\int_{\mathbb{T}^d}\u(x)\d x=\textbf{0}$.

\subsection{Function spaces} Let  $\mathring{\C}_p^{\infty}(\mathbb{T}^d;\R^d)$ denote the space of all infinite times differentiable  functions ($\mathbb{R}^d$-valued) such that $\int_{\mathbb{T}^d}\u(x)\d x=\textbf{0}$ and $\u(x+\mathrm{L}e_i)=\u(x),$ for	every $x\in\R^d$ and $i=1,\ldots,d$, where	$\{e_1,\ldots,e_d\}$ is the canonical basis	of $\R^d$. The Sobolev space  $\mathring{\H}_p^k(\mathbb{T}^d):=\mathring{\mathrm{H}}_p^k(\mathbb{T}^d;\mathbb{R}^d)$ is the completion of $\mathring{\C}_p^{\infty}(\mathbb{T}^d;\R^d)$  with respect to the $\H^s$ norm $\|\u\|_{\mathring{\H}^s_p}:=\left(\sum\limits_{0\leq|\alpha|\leq s}\|\D^{\alpha}\u\|_{\mathbb{L}^2(\mathbb{T}^d)}^2\right)^{\frac{1}{2}}.$ The Sobolev space of periodic functions with zero mean $\mathring{\H}_p^k(\mathbb{T}^d)$ is the same as (Proposition 5.39, \cite{Robinson1}) $$\left\{\u:\u=\sum\limits_{\k\in\mathbb{Z}^d}\u_{\k} e^{2\pi  i\k\cdot\x /  L},\u_0=\mathbf{0},\ \bar{\u}_{\k}=\u_{-\k},\ \|\u\|_{\mathring{\H}^s_f}:=\sum\limits_{k\in\mathbb{Z}^d}|\k|^{2s}|\u_{\k}|^2<\infty\right\}.$$ From Proposition 5.38, \cite{Robinson1}, we infer that the norms $\|\cdot\|_{\mathring{\H}^s_p}$ and $\|\cdot\|_{\mathring{\H}^s_f}$ are equivalent. Let us define 
\begin{align*} 
	\mathcal{V}&:=\{\u\in\mathring{\C}_p^{\infty}(\mathbb{T}^d;\R^d):\nabla\cdot\u=0\},\\
	\mathbb{H}&:=\text{the closure of }\ \mathcal{V} \ \text{ in the Lebesgue space } \L^2(\mathbb{T}^d)=\mathrm{L}^2(\mathbb{T}^d;\R^d),\\
	\mathbb{V}&:=\text{the closure of }\ \mathcal{V} \ \text{ in the Sobolev space } \H^1(\mathbb{T}^d)=\mathrm{H}^1(\mathbb{T}^d;\R^d),\\
	\widetilde{\L}^{p}&:=\text{the closure of }\ \mathcal{V} \ \text{ in the Lebesgue space } \L^p(\mathbb{T}^d)=\mathrm{L}^p(\mathbb{T}^d;\R^d),
\end{align*}
for $p\in(2,\infty]$. The zero mean condition implies the  \emph{Poincar\'{e}-Wirtinger inequality}, \begin{align}\label{poin}
	\lambda_1\|\u\|_{\mathbb{H}}^2\leq\|\u\|^2_{\V},
\end{align} where $\lambda_1=\left(\frac{2\pi}{L}\right)^2$ (Lemma 5.40, \cite{Robinson1}). Then, we characterize the spaces $\H$, $\V$ and $\widetilde{\L}^p$   with the norms  $$\|\u\|_{\H}^2:=\int_{\mathbb{T}^d}|\u(x)|^2\d x,\quad \|\u\|_{\V}^2:=\int_{\mathbb{T}^d}|\nabla\u(x)|^2\d x , \quad \|\u\|_{\widetilde{\L}^p}^p=\int_{\mathbb{T}^d}|\u(x)|^p\d x,\ \text{ for }\ p\in(2,\infty),$$  and $\|\u\|_{\wi\L^{\infty}}=\esssup\limits_{x\in\mathbb{T}^d}|\u(x)|,$ respectively. 
Let us denote $(\cdot,\cdot)$  for the inner product in the Hilbert space $\H$ and $\langle \cdot,\cdot\rangle $ for the induced duality between the spaces $\V$  and its dual $\V'$ as well as $\widetilde{\L}^p$ and its dual $\widetilde{\L}^{p'}$, where $\frac{1}{p}+\frac{1}{p'}=1$. Note that $\H$ can be identified with its own dual $\H'$. The sum space $\V'+\widetilde{\L}^{p'}$ is well defined (see subsection 2.1, \cite{RFHK}). Furthermore, we infer
\begin{align*}
	(\V'+\widetilde{\L}^{p'})'=	\V\cap\widetilde{\L}^p \  \text{and} \ (\V\cap\widetilde{\L}^p)'=\V'+\widetilde{\L}^{p'},
\end{align*} 
where $\|\y\|_{\V\cap\wi\L^{p}}=\max\{\|\y\|_{\V},\|\y\|_{\wi\L^p}\},$ which is equivalent to the norms  $\|\y\|_{\V}+\|\y\|_{\widetilde{\L}^{p}}$  and $\sqrt{\|\y\|_{\V}^2+\|\y\|_{\widetilde{\L}^{p}}^2}$, and  
\begin{align*}
	\|\y\|_{\V'+\widetilde{\L}^{p'}}&=\inf\{\|\y_1\|_{\V'}+\|\y_2\|_{\wi\L^{p'}}:\y=\y_1+\y_2, \y_1\in\V' \ \text{and} \ \y_2\in\wi\L^{p'}\}\nonumber\\&=
	\sup\left\{\frac{|\langle\y_1+\y_2,\f\rangle|}{\|\f\|_{\V\cap\widetilde{\L}^p}}:\boldsymbol{0}\neq\f\in\V\cap\widetilde{\L}^p\right\}.
\end{align*}
Note that $\V\cap\widetilde{\L}^p$ and $\V'+\widetilde{\L}^{p'}$ are Banach spaces. Moreover, we have the continuous embedding $\V\cap\widetilde{\L}^p\hookrightarrow\V\hookrightarrow\H\hookrightarrow\V'\hookrightarrow\V'+\widetilde{\L}^{p'}$.  By Sobolev's embedding, we have  $\V\hookrightarrow\wi\L^p$ for all $p\in[2,\infty)$ in 2D and $p\in[2,6]$ in 3D.  

\iffalse 
We use the following interpolation inequality in the sequel. 
Assume $1\leq s_1\leq s\leq s_2\leq \infty$, $\theta\in(0,1)$ such that $\frac{1}{s}=\frac{\theta}{s_1}+\frac{1-\theta}{s_2}$ and $\u\in\L^{s_1}(\mathbb{T}^d)\cap\L^{s_2}(\mathbb{T}^d)$, then we have 
\begin{align}\label{211}
	\|\u\|_{\L^s(\mathbb{T}^d)}\leq\|\u\|_{\L^{s_1}(\mathbb{T}^d)}^{\theta}\|\u\|_{\L^{s_2}(\mathbb{T}^d)}^{1-\theta}. 
\end{align}
Note that Agmon's inequality yields for $\alpha_1<\frac{s}{2}<s_2$
\begin{align}\label{23}
	\|\u\|_{\L^{\infty}(\mathbb{T}^d)}\leq C\|\u\|_{\H^{\alpha_1}(\mathbb{T}^d)}^{\theta}\|\u\|_{\H^{\alpha_2}(\mathbb{T}^d)}^{1-\theta},
\end{align}
for $\u\in\H^{\alpha_2}(\mathbb{T}^d)$. 
\fi 
\subsection{Linear operator}
Let $\mathcal{P}: \mathring{\L}^2(\mathbb{T}^d) \to\H$ denote the Helmholtz-Hodge (or Leray) projection (section 2.1, \cite{RRS}).  We define the Stokes operator 
\begin{equation*}
	\A\u:=-\mathcal{P}\Delta\u,\;\u\in\D(\A):=\V\cap\mathring{\H}^{2}_p(\mathbb{T}^d).
\end{equation*}
Note that $\D(\A)$ can also be written as $\D(\A)=\big\{\u\in\mathring{\H}^{2}_p(\mathbb{T}^d):\nabla\cdot\u=0\big\}$.  It should be noted that $\mathcal{P}$ and $\Delta$ commutes in periodic domains (Lemma 2.9, \cite{RRS}). For the Fourier expansion $\u=\sum\limits_{\k\in\mathbb{Z}^d} e^{2\pi i \k\cdot\x /  L}\u_{\k},$ one obtains $$-\Delta\u=\left(\frac{2\pi}{L}\right)^2\sum\limits_{\k\in\mathbb{Z}^d} e^{2\pi i \k\cdot\x /  L}|\k|^2\u_{\k}.$$  The operator $\A$ is a non-negative self-adjoint operator in $\H$ with a compact resolvent and   \begin{align}\label{2.7a}\langle \A\u,\u\rangle =\|\u\|_{\V}^2,\ \textrm{ for all }\ \u\in\V, \ \text{ and }\ \|\A\u\|_{\V'}\leq \|\u\|_{\V}.\end{align}
Since $\A^{-1}$ is a compact self-adjoint operator in $\H$, we obtain a complete family of orthonormal eigenfunctions  $\{\boldsymbol{e}_k\}_{k=1}^{\infty}\subset\mathring{\C}_p^{\infty}(\mathbb{T}^d;\R^d)$ such that $\A \boldsymbol{e}_k=\lambda_k\boldsymbol{e}_k$, for $k=1,2,\ldots,$ and  $0<\lambda_1\leq \lambda_2\leq \ldots\to\infty$ are the eigenvalues of $\A$. Note that $\lambda_1=\left(\frac{2\pi}{L}\right)^2$ is the smallest eigenvalue of $\A$ appearing in the Poincar\'e-Wirtinger inequality \eqref{poin}.

In the sequel, we require the fractional powers of $\A$ also.  It is easy to observe that $\D(\A^{\frac{\alpha}{2}})=\big\{\u\in \mathring{\H}^{\alpha}_p(\mathbb{T}^d):\nabla\cdot\u=0\big\}$ and $\|\A^{\frac{\alpha}{2}}\u\|_{\H}=C\|\u\|_{\mathring{\H}^{\alpha}_p},$ for all $\u\in\D(\A^{\frac{\alpha}{2}})$, $\alpha\geq 0$ (cf. \cite{Robinson1}). For $\u\in \H$ and  $\alpha>0,$ one can define
$\A^\alpha \u=\sum\limits_{k=1}^\infty \lambda_k^\alpha \u_k \boldsymbol{e}_k,  \ \u\in\D(\A^\alpha), $ where $\D(\A^\alpha)=\left\{\u\in \H:\sum\limits_{k=1}^\infty \lambda_k^{2\alpha}|\u_k|^2<+\infty\right\}.$ 
Here  $\D(\A^\alpha)$ is equipped with the norm 
$
	\|\A^\alpha \u\|_{\H}=\left(\sum\limits_{k=1}^\infty \lambda_k^{2\alpha}|\u_k|^2\right)^{\frac{1}{2}}.
$
It can be easily seen that $\D(\A^0)=\H$, $\D(\A^\frac{1}{2})=\V$ and $\D(\A^{-\frac{1}{2}})=\V'$. We set $\V_\alpha= \D(\A^{\frac{\alpha}{2}})$ with $\|\u\|_{\V_{\alpha}} =\|\A^{\frac{\alpha}{2}} \u\|_{\H}.$

\subsection{Bilinear operator}
Let us define the \emph{trilinear form} $b(\cdot,\cdot,\cdot):\V\times\V\times\V\to\R$ by $$b(\u,\v,\w)=\int_{\mathbb{T}^d}(\u(x)\cdot\nabla)\v(x)\cdot\w(x)\d x=\sum\limits_{i,j=1}^d\int_{\mathbb{T}^d}\u_i(x)\frac{\partial \v_j(x)}{\partial x_i}\w_j(x)\d x.$$ If $\u, \v$ are such that the linear map $b(\u, \v, \cdot) $ is continuous on $\V$, the corresponding element of $\V'$ is denoted by $\B(\u, \v)$. We also denote  $\B(\u) = \B(\u, \u)=\mathcal{P}[(\u\cdot\nabla)\u]$.
An integration by parts yields  
\begin{equation}\label{b0}
	\left\{
	\begin{aligned}
		b(\u,\v,\v) &= 0,\ \text{ for all }\ \u,\v \in\V,\\
		b(\u,\v,\w) &=  -b(\u,\w,\v),\ \text{ for all }\ \u,\v,\w\in \V.
	\end{aligned}
	\right.\end{equation}
For $r\in[1,3]$,  using H\"older's inequality, we have 
$
\left|\langle \B(\u,\u),\v\rangle \right|=\left|b(\u,\v,\u)\right|\leq\|\u\|_{\wi\L^4}^2\|\v\|_{\V},
$ for all $\v\in\V$ so that $$\|\B(\u)\|_{\V'}\leq\|\u\|_{\wi\L^4}^2, \ \text{ for all }\ \u\in\wi\L^4, $$ and we conclude that $\B(\cdot):\V\cap\widetilde{\L}^{4}\to\V'+\widetilde{\L}^{\frac{4}{3}}$. Furthermore, we have 
\begin{align}\label{2p3}
	\|\B(\u)-\B(\v)\|_{\V'}&\leq \left(\|\u\|_{\widetilde{\L}^{4}}+\|\v\|_{\widetilde{\L}^{4}}\right)\|\u-\v\|_{\widetilde{\L}^{4}},
\end{align}
hence $\B(\cdot):\V\cap\widetilde{\L}^{4}\to\V'+\widetilde{\L}^{\frac{4}{3}}$ is a locally Lipschitz operator. 
An application of H\"older's inequality yields
\begin{align*}
	|b(\u,\v,\w)|=|b(\u,\w,\v)|\leq \|\u\|_{\widetilde{\L}^{r+1}}\|\v\|_{\widetilde{\L}^{\frac{2(r+1)}{r-1}}}\|\w\|_{\V},
\end{align*}
for all $\u\in\widetilde{\L}^{r+1}$, $\v\in\widetilde{\L}^{\frac{2(r+1)}{r-1}}$ and $\w\in\V$, so that we obtain  
\begin{align}\label{2p9}
	\|\B(\u,\v)\|_{\V'}\leq \|\u\|_{\widetilde{\L}^{r+1}}\|\v\|_{\widetilde{\L}^{\frac{2(r+1)}{r-1}}}\leq\|\u\|_{\wi\L^{r+1}}\|\v\|_{\wi\L^{r+1}}^{\frac{2}{r-1}}\|\v\|_{\H}^{\frac{r-3}{r-1}},
\end{align}
using the  interpolation inequality. Therefore, we deduce 
\begin{align}\label{212}
	\left|\langle \B(\u,\u),\v\rangle \right|=\left|b(\u,\v,\u)\right|\leq \|\u\|_{\widetilde{\L}^{r+1}}\|\u\|_{\widetilde{\L}^{\frac{2(r+1)}{r-1}}}\|\v\|_{\V}\leq\|\u\|_{\widetilde{\L}^{r+1}}^{\frac{r+1}{r-1}}\|\u\|_{\H}^{\frac{r-3}{r-1}}\|\v\|_{\V},
\end{align}
{for $r> 3$} and  all $\v\in\V$, which results to 
\begin{align}\label{2.9a}
	\|\B(\u)\|_{\V'}\leq\|\u\|_{\widetilde{\L}^{r+1}}^{\frac{r+1}{r-1}}\|\u\|_{\H}^{\frac{r-3}{r-1}}.
\end{align}
Using \eqref{2p9}, for $\u,\v\in\widetilde{\L}^{r+1}$, we also obtain 
\begin{align}\label{lip}
	\|\B(\u)-\B(\v)\|_{\V'}&\leq \left(\|\u\|_{\H}^{\frac{r-3}{r-1}}\|\u\|_{\widetilde{\L}^{r+1}}^{\frac{2}{r-1}}+\|\v\|_{\H}^{\frac{r-3}{r-1}}\|\v\|_{\widetilde{\L}^{r+1}}^{\frac{2}{r-1}}\right)\|\u-\v\|_{\widetilde{\L}^{r+1}},
\end{align}
{for $r> 3$}, by using the  interpolation inequality.  Therefore, the map $\B(\cdot):\V\cap\wi\L^{r+1}\to\V'+\wi\L^{\frac{r+1}{r}}$ is locally Lipschitz. 
\subsection{Nonlinear operator}
Let us now consider the nonlinear  operator $\mathcal{C}(\u):=\mathcal{P}(|\u|^{r-1}\u)$, for all $\u\in\wi\L^{r+1}$. It can be easily verified that $\langle\mathcal{C}(\u),\u\rangle =\|\u\|_{\widetilde{\L}^{r+1}}^{r+1}$. % Furthermore, for all $\u\in\wi\L^{r+1}$, the map is Gateaux differentiable with the Gateaux derivative 
%	\begin{align}\label{29}
	%	\mathcal{C}'(\u)\v&=\left\{\begin{array}{cl}\mathcal{P}(\v),&\text{ for }r=1,\\ \left\{\begin{array}{cc}\mathcal{P}(|\u|^{r-1}\v)+(r-1)\mathcal{P}\left(\frac{\u}{|\u|^{3-r}}(\u\cdot\v)\right),&\text{ if }\u\neq \mathbf{0},\\\mathbf{0},&\text{ if }\u=\mathbf{0},\end{array}\right.&\text{ for } 1<r<3,\\ \mathcal{P}(|\u|^{r-1}\v)+(r-1)\mathcal{P}(\u|\u|^{r-3}(\u\cdot\v)), &\text{ for }r\geq 3,\end{array}\right.
	%	\end{align} for $\v\in\wi\L^{r+1}$. 
For $0<\theta<1$, an application of Taylor's formula yields (\cite{MTM5,MTM4})
\begin{align}\label{213}
	|	\langle \mathcal{C}(\u)-\mathcal{C}(\v),\w\rangle|&%\leq \|(|\u|^{r-1}\u)-(|\v|^{r-1}\v)\|_{\widetilde{\L}^{\frac{r+1}{r}}}\|\w\|_{\widetilde{\L}^{r+1}}\nonumber\\&\leq \sup_{0<\theta<1}\|\mathcal{C}'(\theta\u+(1-\theta)\v)(\u-\v)|\|_{\widetilde{\L}^{{\frac{r+1}{r}}}}\|\w\|_{\widetilde{\L}^{r+1}}\nonumber\\&
	\leq r\left(\|\u\|_{\widetilde{\L}^{r+1}}+\|\v\|_{\widetilde{\L}^{r+1}}\right)^{r-1}\|\u-\v\|_{\widetilde{\L}^{r+1}}\|\w\|_{\widetilde{\L}^{r+1}},
\end{align}
for all $\u,\v,\w\in\widetilde{\L}^{r+1}$. 
Thus the operator $\mathcal{C}(\cdot):\widetilde{\L}^{r+1}\to\widetilde{\L}^{\frac{r+1}{r}}$ is locally Lipschitz. Furthermore, 	for any $r\in[1,\infty)$, we have (see \cite{MTM4})
\begin{align}\label{2.23}
	\langle\mathcal{C}(\u)-\mathcal{C}(\v),\u-\v\rangle&\geq \frac{1}{2}\||\u|^{\frac{r-1}{2}}(\u-\v)\|_{\H}^2+\frac{1}{2}\||\v|^{\frac{r-1}{2}}(\u-\v)\|_{\H}^2\nonumber\\&\geq \frac{1}{2^{r-1}}\|\u-\v\|_{\wi\L^{r+1}}^{r+1}\geq 0,
\end{align}
for $r\geq 1$ 	and all $\u,\v\in\wi\L^{r+1}$.  We also define the nonlinear  operator $\widetilde{\mathcal{C}}(\u):=\mathcal{P}(|\u|^{q-1}\u)$, for $q\in[1,r)$ and for all $\u\in\wi\L^{q+1}$. The operator $\widetilde{\mathcal{C}}(\cdot)$ also has the same properties as $\mathcal{C}(\cdot)$.

	\subsection{Solvability results} 
Let $(\Omega,\mathscr{F},\mathbb{P})$ be a probability space equipped with a filtration $\{\mathscr{F}_t\}_{t\geq 0},$ satisfying the usual conditions and $\W = \{(\W_1(t),\ldots,\W_m (t))\}_{t\geq 0}$  is an $m$-dimensional standard Brownian motion defined on it.	The system \eqref{1.5} can be reformulated as follows:
	\begin{equation}\label{2p11}
			\left\{
		\begin{aligned}
			\d\u(t)&+\left[\mu\A\u(t)+\B(\u(t))+\alpha\widetilde{\mathcal{C}}(\u(t))+\beta\mathcal{C}(\u(t))\right]\d t\\&=\F(\u(t))\d t+\sum\limits_{i=1}^m\sigma^i(\u(t))\d\W^i(t),\\
			\u(0)&=\h,
		\end{aligned}
		\right.
	\end{equation}
where $\mathcal{P}\F$ and $\mathcal{P}\sigma^i$ are denoted by $\F$ and $\sigma^i$, respectively for simplicity of notations. Let $\F,\sigma_i,i=1,\ldots,m$ be measurable mappings from $\H$ into $\H$ and satisfy the following assumption: 
\begin{hypothesis}\label{hyp1}
	(H.1) The mappings $\F,\sigma_i:\H\to\H$ are globally Lipschitz maps, that is, there exists a positive constant $L_1<\infty$	such that
	\begin{align}
		\|\F(\u_1)-\F(\u_2)\|_{\H}^2+\sum\limits_{i=1}^m\|\sigma^i(\u_1)-\sigma^i(\u_2)\|_{\H}^2\leq L_1\|\u_1-\u_2\|_{\H}^2,\ \text{ for all }\ \u_1,\u_2\in\H. 
	\end{align}
\end{hypothesis}
From Hypothesis \ref{hyp1}, it is clear that 
\begin{align}
	\|\F(\u)\|_{\H}^2+\sum\limits_{i=1}^m\|\sigma^i(\u)\|_{\H}^2\leq K\left(1+\|\u\|_{\H}^2\right),
\end{align}
where $K=2\max\left\{(n+1)L,\sum\limits_{i=1}^m\|\sigma^i(\boldsymbol{0})\|_{\H}^2+\|\F(\boldsymbol{0})\|_{\H}^2\right\}$.  Let us now provide the definition of strong solution in the probabilistic sense (weak solution in the analytic sense) to the system \eqref{2p11}. 	
\begin{definition}
	A continuous $\H$-continuous $\mathscr{F}_t$ adapted process $\u=\{\u(t)\}_{t\geq 0}$ is said to be a \emph{strong solution} to the system \eqref{2p11} if for any $T > 0$, $\u\in\mathrm{L}^2(\Omega;\mathrm{L}^{\infty}(0,T;\H)\cap\mathrm{L}^2(0,T;\V))\cap\mathrm{L}^{r+1}(\Omega;\mathrm{L}^{r+1}(0,T;\wi\L^{r+1}))$ and for any $t\geq 0$, the
	following equation holds $\mathbb{P}$-a.s.:
	\begin{align}\label{2p13}
		(\u(t),\v)&=(\boldsymbol{h},\v)-\mu\int_0^t\langle\A\u(s),\v\rangle\d s-\int_0^t\langle\B(\u(s)),\v\rangle\d s-\alpha\int_0^t\langle\widetilde{\mathcal{C}}(\u(s)),\v\rangle\d s\nonumber\\&\quad-\beta\int_0^t\langle\mathcal{C}(\u(s)),\v\rangle\d s+\int_0^t(\F(\u(s)),\v)\d s+\sum\limits_{i=1}^m\int_0^t(\boldsymbol{\sigma}^{i}(\u(s)),\v)\d\W^i(s),
	\end{align}
for all $\v\in\V\cap\wi\L^{r+1}$. A strong solution $\u(\cdot)$ to \eqref{2p11} is called a \emph{pathwise  unique strong solution} if $\widetilde{\u}(\cdot)$ is an another strong solution, then $$\mathbb{P}\Big\{\omega\in\Omega:  \u(t)=\widetilde{\u}(t),\  \text{ for all }\ t\in[0,T]\Big\}=1.$$ 
\end{definition}
Under Hypothesis \ref{hyp1} and $\h\in\H$, from \cite[Theorem 3.7]{MTM4},  it is known that the system \eqref{2p11}  admits a unique strong solution for $d=2$, $r\in[1,\infty)$ and $d=3$, $r\in[3,\infty)$. Moreover, the following It\^o's formula holds true:
\begin{align}
		&\|\u(t)\|_{\H}^2+2\mu\int_0^t\|\u(s)\|_{\V}^2\d s+2\beta\int_0^t\|\u(s)\|_{\wi\L^{r+1}}^{r+1}\d s\nonumber\\&=\|\h\|^2-2\alpha\int_0^t\|\u(s)\|_{\wi\L^{q+1}}^{q+1}\d s+2\int_0^t(\F(\u(s)),\u(s))\d s+\sum\limits_{i=1}^m\int_0^t\|\sigma^i(\u(s))\|_{\H}^2\d s\nonumber\\&\quad+2\sum\limits_{i=1}^m\int_0^t(\boldsymbol{\sigma}^{i}(\u(s)),\u(s))\d\W^i(s),\ \mathbb{P}\text{-a.s.},
\end{align}
 for all $t\geq 0$. These solutions are weak in the analytical sense (derivatives exists only in the sense of distributions) and strong in the stochastic sense (the underlying probability space is a priori given). 
	
	\section{Approximations of stochastic CBFeD equations by pure jump type stochastic CBFeD equations}\label{Sec3}\setcounter{equation}{0}
Let  $\lambda^i(\d z)$,  $i=1,\ldots,m$ denote $\sigma$-finite measures on the measurable space $(\R_0,\mathcal{B}(\R_0))$, where $\R_0 :=\R\backslash\{0\}$. Let $\pi^i,\ i=1,\ldots,m$ be mutually independent $\mathscr{F}_t$-adapted Poisson random measures on $[0,T]\times\R_0$ with intensity measure $\d t\times\lambda^i(\d z)$. For $U\in\mathcal{B}(\R_0)$ with $\lambda^i(U)<\infty$, we define $\widetilde{\pi}^i((0,t]\times U):=\widetilde{\pi}^i((0,t]\times U)-t\lambda^i(U),\ t>0,$ for the corresponding compensated Poisson random measures on $[0, T ] \times\Omega\times\R_0$. 	 For more details on Poisson random measures, we refer the interested readers to \cite{Ap,IW}, etc. 

For $\e>0$, let $\boldsymbol{\sigma}^{i,\e}:\H\times\mathbb{R}_0\to\mathbb{R}$, $i=1,\ldots,m$ be given measurable maps. Let us consider the following stochastic convective Brinkman-Forchheimer-extended
	Darcy (CBFeD) equations perturbed by pure jump noise: 
	\begin{align}\label{3p1}
		\u_{\e}(t)&=\boldsymbol{h}-\mu\int_0^t\A\u_{\e}(s)\d s-\int_0^t\B(\u_{\e}(s))\d s-\alpha\int_0^t\widetilde{\mathcal{C}}(\u_{\e}(s))\d s-\beta\int_0^t\mathcal{C}(\u_{\e}(s))\d s\nonumber\\&\quad+\int_0^t\F(\u_{\e}(s))\d s+\sum\limits_{i=1}^m\int_0^t\int_{\R_0}\boldsymbol{\sigma}^{i,\e}(\u_{\e}(s-),z)\wi{\pi}^{i}(\d s,\d z),
	\end{align}
in $\V'+\wi\L^{\frac{r+1}{r}}$. In order to obtain the global solvability results of the system \eqref{3p1}, we impose the following conditions on $\boldsymbol{\sigma}^{i,\e}$: 
\begin{hypothesis}\label{hyp2}
	(H.2) There exist constants $K_1,K_2,L_2>0$ and $\e_0>0$ such that 
		\begin{align}
			&\|\F(\u)\|_{\H}^2+\sup_{0<\e\leq \e_0}\sum\limits_{i=1}^m\int_{\R_0}\|\boldsymbol{\sigma}^{i,\e}(\u,z)\|_{\H}^2\lambda^{i}(\d z)\leq K_1(1+\|\u\|_{\H}^2), \label{32}\\
			&\sup_{0<\e\leq \e_0}\sum\limits_{i=1}^m\int_{\R_0}\|\boldsymbol{\sigma}^{i,\e}(\u,z)\|_{\H}^{2p}\lambda^{i}(\d z)\leq K_2(1+\|\u\|_{\H}^{2p}), \label{33}\\
			&\|\F(\u_1)-\F(\u_2)\|_{\H}^2+\sup_{0<\e\leq \e_0}\sum\limits_{i=1}^m\int_{\R_0}\|\boldsymbol{\sigma}^{i,\e}(\u_1,z)-\boldsymbol{\sigma}^{i,\e}(\u_2,z)\|_{\H}^2\lambda^{i}(\d z)\nonumber\\&\leq L_2\|\u_1-\u_2\|_{\H}^2,\label{34}
		\end{align}
	where $p=\frac{1+\e}{2}\max\{3,r+1\}$. 
\end{hypothesis}
Let us	denote  $\D([0,T];\H)$ for the space of all c\`adl\`ag paths from $[0,T]$ into $\H$  equipped with the	Skorokhod topology.
\begin{definition}\label{def3.2}
	An $\H$-valued $\{\mathscr{F}_t\}$-adapted process $\u_{\e}=\{\u_{\e}(t)\}_{t\geq 0}$ is said to be a  strong solution to \eqref{3p1} if 
	\begin{itemize}
		\item [(i)] for any $T>0$, $\u_{\e}\in\mathrm{L}^2(\Omega;\mathrm{L}^{\infty}(0,T;\H)\cap\mathrm{L}^2(0,T;\H))\cap\mathrm{L}^{r+1}(\Omega;\mathrm{L}^{r+1}(0,T;\wi\L^{r+1}))$ having a  modification with paths in $\D([0,T];\H)$, $\mathbb{P}$-a.s.,
		\item [(ii)] for every $t\geq 0$, 
		\begin{align}\label{3p5}
			(\u_{\e}(t),\v)&=(\boldsymbol{h},\v)-\mu\int_0^t\langle\A\u_{\e}(s),\v\rangle\d s-\int_0^t\langle \B(\u_{\e}(s)),\v\rangle\d s-\alpha\int_0^t\langle\widetilde{\mathcal{C}}(\u_{\e}(s)),\v\rangle\d s\nonumber\\&\quad-\beta\int_0^t\langle\mathcal{C}(\u_{\e}(s)),\v\rangle\d s+\int_0^t(\F(\u_{\e}(s)),\v)\d s\nonumber\\&\quad+\sum\limits_{i=1}^m\int_0^t\int_{\R_0}(\boldsymbol{\sigma}^{i,\e}(\u_{\e}(s-),z),\v)\wi{\pi}^{i}(\d s,\d z),\ \mathbb{P}\text{-a.s.},
		\end{align}
	for $\v\in\V\cap\wi\L^{r+1}$, 
	\end{itemize}
A strong solution $\u_{\e}(\cdot)$ to the system (\ref{3p1}) is called a 	\emph{pathwise  unique strong solution} if	$\widetilde{\u}_{\e}(\cdot)$ is an another strong	solution, then $$\mathbb{P}\Big\{\omega\in\Omega:\u_{\e}(t)=\widetilde{\u}_{\e}(t),\ \text{ for all }\ t\in[0,T]\Big\}=1.$$ 
\end{definition}
Under Hypothesis  \ref{hyp2} (H.2) and $\h\in\H$, it is well-known that for $0<\e\leq\e_0$, the system \eqref{3p1} admits a unique strong solution solution in the sense of Definition \ref{def3.2} (see Theorem 3.6, \cite{MTM6}). Moreover, the following It\^o formula holds true for all $t\geq 0$, $\mathbb{P}$\text{-a.s.}
\begin{align}\label{3p6}
	&	\|\u_{\e}(t)\|_{\H}^2+2\mu\int_0^t\|\u_{\e}(s)\|_{\V}^2\d s+2\beta\int_0^t\|\u_{\e}(s)\|_{\wi\L^{r+1}}^{r+1}\d s\nonumber\\&=\|\h\|^2-2\alpha\int_0^t\|\u_{\e}(s)\|_{\wi\L^{q+1}}^{q+1}\d s+2\int_0^t(\F(\u_{\e}(s)),\u_{\e}(s))\d s\nonumber\\&\quad+\sum\limits_{i=1}^m\int_0^t\int_{\R_0}\|\boldsymbol{\sigma}^{i,\e}(\u_{\e}(s),z)\|_{\H}^2\pi^i(\d s,\d z)+2\sum\limits_{i=1}^m\int_0^t\int_{\R_0}(\boldsymbol{\sigma}^{i,\e}(\u_{\e}(s-),z),\u_{\e}(s-))\wi{\pi}^{i}(\d s,\d z).
\end{align}

In order to achieve the goal of this work, we consider the following conditions:
\begin{hypothesis}\label{hyp3}
(H.3) \begin{enumerate}
		\item [(i)] For each $i\in\{1,\ldots,m\}$, for all $M>0$, 
		\begin{align}\label{3p7}
			\sup_{\|\u\|_{\H}\leq M}\sup_{z\in\mathbb{R}_0}\|\sigma^{i,\e}(\u,z)\|_{\H}\xrightarrow{\e\to 0}0 . 
		\end{align}
	\item [(ii)] For each $i\in\{1,\ldots,m\}$ and each $k,j\in\mathbb{N}$, $\u\in\H$, 
	\begin{align}
		\int_{\mathbb{R}_0}(\sigma^{i,\e}(\u,z),\boldsymbol{e}_k)(\sigma^{i,\e}(\u,z),\boldsymbol{e}_j)\lambda^{i}(\d z)\xrightarrow{\e\to 0}(\sigma^i(\u),\boldsymbol{e}_k)(\sigma^i(\u),\boldsymbol{e}_j). 
	\end{align}
	\end{enumerate}
(H.4) For each $i\in\{1,\ldots,m\}$ and every $\u\in\H$, 
\begin{align}
	\int_{\mathbb{R}_0}\|\sigma^{i,\e}(\u,z)\|_{\H}^2\lambda^i(\d z)\xrightarrow{\e\to 0}\|\sigma^{i}(\u)\|_{\H}^2. 
\end{align}
\end{hypothesis}
Motivation for considering  Hypothesis \ref{hyp3} (H.3)-(H.4) is discussed in Remark 3.2, \cite{SSTZ}.  Condition (i) of (H.3) is introduced in response to the intuition of approximating Brownian motion through pure jump noise by requiring that the jump heights of all jumps should converge to zero. Applying Ito's formula to $\|\u_{\e}(\cdot)\|_{\H}^2$, we introduce (H.4) in order to provide an approximation of the $\H$-norm of the solution of \eqref{2p13} in some sense. Condition (ii) of (H.3) is introduced to justify the limit of the solutions of \eqref{3p1} is a probabilistic weak solution of \eqref{2p13} through the associated martingale problem. The infinite volume of jump measures $\lambda^i,\ i=1,\ldots,m$ is necessary, as (H.3)'s condition (i) and (H.4) contradict each other by the dominated convergence theorem, if they have finite volume.

We need the following assumption on $\sigma^{i,\e}(\cdot,\cdot)$ to obtain the regularity results of the system \eqref{3p1}. 
\begin{hypothesis}\label{hyp4}
	\begin{enumerate}
		\item [(H.5)] The map $\sigma^{i,\e}$ takes the space $\V$ into itself and there exist constants $C > 0$ and $\e_0 > 0$	such that
	\begin{align}
\sup_{0<\e\leq \e_0}\sum\limits_{i=1}^m\int_{\mathbb{R}_0}\|\sigma^{i,\e}(\u,z)\|_{\V}^2\lambda^i(\d z)\leq C\left(1+\|\u\|_{\V}^2\right). 
	\end{align}
	\end{enumerate}
\end{hypothesis}

\subsection{Energy estimates and tightness}
In the rest of the paper, we take $m = 1$ for simplicity  and omit the superscript $i$ of $\sigma^{i},\wi\pi^{i},\lambda^i$. The case of $m > 1$ does not cause any extra difficulties.

\begin{lemma}\label{lem3.6}
	Assume that  Hypothesis \ref{hyp2} (H.2) holds and $\h\in\H$. Let $\u_{\e}(\cdot)$ be the  unique strong solution of \eqref{3p1}. Then we have 
	\begin{align}\label{3p12}
		\sup_{0<\e\leq\e_0}\left\{\E\left[\sup_{0\leq t\leq T}\|\u_{\e}(t)\|_{\H}^{2p}\right]+\E\left[\left(\int_0^T\|\u_{\e}(t)\|_{\V}^2\d t\right)^p\right]+\E\left[\left(\int_0^T\|\u_{\e}(t)\|_{\wi\L^{r+1}}^{r+1}\d t\right)^p\right]\right\}<\infty,
	\end{align}
for  $p=\frac{1+\e}{2}\max\{3,r+1\}$. 
\end{lemma}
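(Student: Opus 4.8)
The plan is to apply the Itô formula \eqref{3p6} for the $\H$-norm, raise it to the power $p$, and close a Gronwall-type estimate uniformly in $\e$. First I would start from the energy identity \eqref{3p6}. Writing $\xi_{\e}(t):=\|\u_{\e}(t)\|_{\H}^2+2\mu\int_0^t\|\u_{\e}(s)\|_{\V}^2\d s+2\beta\int_0^t\|\u_{\e}(s)\|_{\wi\L^{r+1}}^{r+1}\d s$, I would first dispose of the pumping term $-2\alpha\int_0^t\|\u_{\e}(s)\|_{\wi\L^{q+1}}^{q+1}\d s$: since $q<r$, the interpolation/Young inequality lets me absorb $2|\alpha|\|\u_{\e}\|_{\wi\L^{q+1}}^{q+1}$ into a small fraction of the $\beta\|\u_{\e}\|_{\wi\L^{r+1}}^{r+1}$ dissipation plus a multiple of $\|\u_{\e}\|_{\H}^2$, so that this term is harmless. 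The forcing term is controlled by \eqref{32} via Cauchy–Schwarz and Young as $2(\F(\u_{\e}),\u_{\e})\le \|\F(\u_{\e})\|_{\H}^2+\|\u_{\e}\|_{\H}^2\le K_1(1+\|\u_{\e}\|_{\H}^2)+\|\u_{\e}\|_{\H}^2$. This yields, after taking the $p$-th power and using $(a+b+c)^p\le C_p(a^p+b^p+c^p)$,
\begin{align*}
	\E\Big[\sup_{0\le s\le t}\|\u_{\e}(s)\|_{\H}^{2p}\Big]+\E\Big[\Big(\int_0^t\|\u_{\e}\|_{\V}^2\Big)^p\Big]+\E\Big[\Big(\int_0^t\|\u_{\e}\|_{\wi\L^{r+1}}^{r+1}\Big)^p\Big]
	\le C\Big(1+\|\h\|_{\H}^{2p}\Big)+C\int_0^t\E\Big[\sup_{0\le\tau\le s}\|\u_{\e}(\tau)\|_{\H}^{2p}\Big]\d s+(\text{noise}).
\end{align*}

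The substantive work is the noise estimate, and this is the step I expect to be the main obstacle. There are two stochastic contributions from \eqref{3p6}: the compensated martingale $2\int_0^t\int_{\R_0}(\boldsymbol{\sigma}^{\e}(\u_{\e}(s-),z),\u_{\e}(s-))\wi{\pi}(\d s,\d z)$ and the genuine (uncompensated) jump term $\int_0^t\int_{\R_0}\|\boldsymbol{\sigma}^{\e}(\u_{\e}(s),z)\|_{\H}^2\pi(\d s,\d z)$. For the martingale part raised to the $p$-th power I would invoke the Burkholder–Davis–Gundy inequality for stochastic integrals against compensated Poisson random measures, bounding the $p$-th moment of the supremum by the $p/2$-th moment of the quadratic variation $\int_0^t\int_{\R_0}|(\boldsymbol{\sigma}^{\e}(\u_{\e},z),\u_{\e})|^2\lambda(\d z)\d s$, which by Cauchy–Schwarz is dominated by $\int_0^t\|\u_{\e}\|_{\H}^2\big(\int_{\R_0}\|\boldsymbol{\sigma}^{\e}(\u_{\e},z)\|_{\H}^2\lambda(\d z)\big)\d s$; then \eqref{32} and Young's inequality split off a small multiple of $\E[\sup\|\u_{\e}\|_{\H}^{2p}]$ (absorbed into the left side) against a time integral of the same quantity. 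The uncompensated term is the more delicate one: after compensating it, the drift $\int_0^t\int_{\R_0}\|\boldsymbol{\sigma}^{\e}(\u_{\e},z)\|_{\H}^2\lambda(\d z)\d s$ raised to power $p$ is controlled by \eqref{32}, while its compensated martingale part requires the higher-moment bound \eqref{33}. This is precisely why the exponent in Hypothesis \ref{hyp2} is taken as $p=\tfrac{1+\e}{2}\max\{3,r+1\}$: BDG applied to $\int\!\int\|\boldsymbol{\sigma}^{\e}\|_{\H}^2\wi\pi$ raised to the $p$-th power produces a quadratic-variation term of the form $\big(\int_0^t\int_{\R_0}\|\boldsymbol{\sigma}^{\e}(\u_{\e},z)\|_{\H}^4\lambda(\d z)\d s\big)^{p/2}$, and bounding $\|\boldsymbol{\sigma}^{\e}\|_{\H}^{2p}$ needs exactly the $2p$-th moment integrability guaranteed by \eqref{33}.

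Once every noise contribution has been majorized by $C(1+\|\h\|_{\H}^{2p})$ plus $C\int_0^t\E[\sup_{0\le\tau\le s}\|\u_{\e}(\tau)\|_{\H}^{2p}]\d s$, with all constants depending only on $K_1,K_2,\mu,\beta,\alpha,q,r,T$ and \emph{not} on $\e\in(0,\e_0]$ (this uniformity being the point of the $\sup_{0<\e\le\e_0}$ in \eqref{32}--\eqref{33}), I would close the argument with Grönwall's inequality applied to $g(t):=\E[\sup_{0\le s\le t}\|\u_{\e}(s)\|_{\H}^{2p}]$. A standard subtlety is that a priori $g(t)$ could be infinite, so the clean way is to first run the whole estimate with the stopping times $\tau_N=\inf\{t:\|\u_{\e}(t)\|_{\H}\ge N\}$ inserted, obtain an $N$-uniform bound from Grönwall, and then let $N\to\infty$ by monotone convergence. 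The resulting bound on $\E[\sup_{0\le t\le T}\|\u_{\e}(t)\|_{\H}^{2p}]$ feeds directly back into the dissipation terms to give the uniform-in-$\e$ control of the $\V$ and $\wi\L^{r+1}$ integrals, which is the conclusion \eqref{3p12}.
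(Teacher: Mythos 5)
Your proposal is correct and follows essentially the same route as the paper: Itô's formula \eqref{3p6}, absorption of the pumping term into the $\beta$-dissipation via Young's inequality, Burkholder--Davis--Gundy for the compensated martingale, the higher-moment bound \eqref{33} to control the $p$-th power of the uncompensated term $\int_0^t\int_{\R_0}\|\boldsymbol{\sigma}^{\e}\|_{\H}^2\pi(\d s,\d z)$ (the paper invokes Remark 2.5 of \cite{JZZB} for exactly the splitting you describe via compensation), and Gronwall. The only cosmetic difference is that the paper first runs the argument at $p=1$ to get \eqref{3p17} and then repeats it for general $p$, whereas you go directly to the $p$-th power; both are fine.
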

\begin{proof} By  an application of the infinite dimensional It\^o formula \eqref{3p6}, we have 
		\begin{align}\label{3p13}
		&	\|\u_{\e}(t)\|_{\H}^2+2\mu\int_0^t\|\u_{\e}(s)\|_{\V}^2\d s+2\beta\int_0^t\|\u_{\e}(s)\|_{\wi\L^{r+1}}^{r+1}\d s\nonumber\\&=\|\h\|^2-2\alpha\int_0^t\|\u_{\e}(s)\|_{\wi\L^{q+1}}^{q+1}\d s+2\int_0^t(\F(\u_{\e}(s)),\u_{\e}(s))\d s+\int_0^t\int_{\R_0}\|\boldsymbol{\sigma}^{\e}(\u_{\e}(s),z)\|_{\H}^2\pi(\d s,\d z)\nonumber\\&\quad+2\int_0^t\int_{\R_0}(\boldsymbol{\sigma}^{\e}(\u_{\e}(s-),z),\u_{\e}(s-))\wi{\pi}(\d s,\d z),\ \mathbb{P}\text{-a.s.}, 
	\end{align}
for all $t\in[0,T]$. Taking supremum over time $t\in[0,T]$ and then taking expectation in \eqref{3p13}, we get 
\begin{align}\label{3p14}
&\E\left[\sup_{t\in[0,T]}	\|\u_{\e}(t)\|_{\H}^2\right]+2\mu\E\left[\int_0^T\|\u_{\e}(t)\|_{\V}^2\d t\right]+\beta\E\left[\int_0^T\|\u_{\e}(t)\|_{\wi\L^{r+1}}^{r+1}\d t\right]\nonumber\\&\leq\|\h\|_{\H}^2+C_{\alpha,\beta} +\E\left[\int_0^T\|\F(\u_{\e}(t))\|_{\H}^2\d t\right]+\E\left[\int_0^T\|\u_{\e}(t)\|_{\H}^2\d t\right]\nonumber\\&\quad+\E\left[\int_0^T\int_{\R_0}\|\boldsymbol{\sigma}^{\e}(\u_{\e}(t),z)\|_{\H}^2\lambda(\d z)\d t\right]\nonumber\\&\quad+2\E\left[\sup_{t\in[0,T]}\left|\int_0^t\int_{\R_0}(\boldsymbol{\sigma}^{\e}(\u_{\e}(s-),z),\u_{\e}(s-))\wi{\pi}(\d s,\d z)\right|\right],
\end{align}
where $C_{\alpha,\beta}=(2\alpha)^{\frac{r+1}{r-q}}\left(\frac{r-q}{r+1}\right)\left(\frac{q+1}{\beta(r+1)}\right)^{\frac{q+1}{r-q}}|\mathbb{T}^d|$, for $1\leq q<r$ and $|\mathbb{T}^d|$ is the volume of $\mathbb{T}^d$. Applying Burkholder-Davis-Gundy's and Young's inequalities, we estimate the final term from the right hand side of the inequality \eqref{3p14} as 
\begin{align}\label{3p15}
	I&\leq 2\sqrt{3} \E\left[\left(\int_0^T\int_{\R_0}\|\boldsymbol{\sigma}^{\e}(\u_{\e}(t),z)\|_{\H}^2\|\u_{\e}(t)\|_{\H}^2\pi(\d t,\d z)\right)^{\frac{1}{2}}\right]\nonumber\\&\leq 2\sqrt{3}\E\left[\sup_{t\in[0,T]}\|\u_{\e}(t)\|_{\H}\left(\int_0^T\int_{\R_0}\|\boldsymbol{\sigma}^{\e}(\u_{\e}(t),z)\|_{\H}^2\pi(\d t,\d z)\right)^{\frac{1}{2}}\right]\nonumber\\&\leq\frac{1}{2}\E\left[\sup_{t\in[0,T]}\|\u_{\e}(t)\|_{\H}^2\right]+6\E\left[\int_0^T\int_{\R_0}\|\boldsymbol{\sigma}^{\e}(\u_{\e}(t),z)\|_{\H}^2\lambda(\d z)\d t\right]. 
\end{align}
Using \eqref{3p15} in \eqref{3p14} and then applying Hypothesis \ref{hyp2},  we deduce 
\begin{align}\label{3p16}
	&\frac{1}{2}\E\left[\sup_{t\in[0,T]}	\|\u_{\e}(t)\|_{\H}^2\right]+2\mu\E\left[\int_0^T\|\u_{\e}(t)\|_{\V}^2\d t\right]+\beta\E\left[\int_0^T\|\u_{\e}(t)\|_{\wi\L^{r+1}}^{r+1}\d t\right]\nonumber\\&\leq\|\h\|_{\H}^2+C_{\alpha,\beta} +C\E\left[\int_0^T\left(\|\F(\u_{\e}(t))\|_{\H}^2+\int_{\R_0}\|\boldsymbol{\sigma}^{\e}(\u_{\e}(t),z)\|_{\H}^2\lambda(\d z)\right)\d t\right]\nonumber\\&\quad+\E\left[\int_0^T\|\u_{\e}(t)\|_{\H}^2\d t\right]\nonumber\\&\leq\|\h\|_{\H}^2+C_{\alpha,\beta}+(CK_1+1)\E\left[\int_0^T\left(1+\|\u_{\e}(t)\|_{\H}^2\right)\d t\right]. 
\end{align}
An application of Gronwall's inequality in \eqref{3p16} yields 
\begin{align}\label{3p17}
	&\E\left[\sup_{t\in[0,T]}	\|\u_{\e}(t)\|_{\H}^2\right]+4\mu\E\left[\int_0^T\|\u_{\e}(t)\|_{\V}^2\d t\right]+2\beta\E\left[\int_0^T\|\u_{\e}(t)\|_{\wi\L^{r+1}}^{r+1}\d t\right]\nonumber\\&\leq \left(\|\h\|_{\H}^2+(C_{\alpha,\beta}+CK_1+1)T\right)e^{(K_1+1)T}\leq C(\alpha,\beta,K_1,T)(1+\|\h\|_{\H}^2). 
\end{align}
Since each term on the left hand side of \eqref{3p13} is non-negative, one can easily deduce that 
\begin{align}\label{3p18}
		&	\E\left[\sup_{t\in[0,T]}\|\u_{\e}(t)\|_{\H}^{2p}+\left(\int_0^T\|\u_{\e}(t)\|_{\V}^2\d t\right)^{p}+\left(\int_0^T\|\u_{\e}(t)\|_{\wi\L^{r+1}}^{r+1}\d t\right)^{p}\right]\nonumber\\&\leq C\|\h\|_{\H}^{2p}+CC_{\alpha,\beta}^{2p}+C\E\left[\left(\int_0^T\|\F(\u_{\e}(t))\|_{\H}^2\d t\right)^{\frac{p}{2}}\left(\int_0^T\|\u_{\e}(t)\|_{\H}^2\d t\right)^{\frac{p}{2}}\right]\nonumber\\&\quad+C\E\left[\left(\int_0^T\int_{\R_0}\|\boldsymbol{\sigma}^{\e}(\u_{\e}(t),z)\|_{\H}^2\pi(\d z,\d t)\right)^p\right]\nonumber\\&\quad+C\E\left[\sup_{t\in[0,T]}\left|\int_0^t\int_{\R_0}(\boldsymbol{\sigma}^{\e}(\u_{\e}(s-),z),\u_{\e}(s-))\wi{\pi}(\d s,\d z)\right|^p\right]. 
\end{align}
Using Remark 2.5, \cite{JZZB} and Hypothesis \ref{hyp2}, we estimate the penultimate term from the right hand side of the inequality \eqref{3p18} as 
\begin{align}\label{3p19}
&	C\E\left[\left(\int_0^T\int_{\R_0}\|\boldsymbol{\sigma}^{\e}(\u_{\e}(t),z)\|_{\H}^2\pi(\d z,\d t)\right)^p\right]\nonumber\\&\leq C\E\left[\int_0^T\int_{\R_0}\|\boldsymbol{\sigma}^{\e}(\u_{\e}(t),z)\|_{\H}^{2p}
\lambda(\d z)\d t\right]+C\E\left[\left(\int_0^T\int_{\R_0}\|\boldsymbol{\sigma}^{\e}(\u_{\e}(t),z)\|_{\H}^2\lambda(\d z)\d t\right)^p\right]\nonumber\\&\leq CK_2\E\left[\int_0^T\left(1+\|\u_{\e}(t)\|_{\H}^{2p}\right)\d t\right]+CK_1^p\E\left[\left(\int_0^T\left(1+\|\u_{\e}(t)\|_{\H}^2\right)\d t\right)^p\right]\nonumber\\&\leq C(K_2+K_1^p)\left(T^p+(1+T^{p-1})\E\left[\int_0^T\|\u_{\e}(t)\|_{\H}^{2p}\d t\right]\right). 
	\end{align}
Making use of Burkholder-Davis-Gundy's and Young's inequalities, and \eqref{3p19}, we estimate the final term from the right hand side of the inequality \eqref{3p18} as 
\begin{align}\label{3p20}
	&C\E\left[\sup_{t\in[0,T]}\left|\int_0^t\int_{\R_0}(\boldsymbol{\sigma}^{\e}(\u_{\e}(s-),z),\u_{\e}(s-))\wi{\pi}(\d s,\d z)\right|^p\right]\nonumber\\&\leq C\E\left[\sup_{t\in[0,T]}\|\u_{\e}(t)\|_{\H}^{p}\left(\int_0^T\int_{\R_0}\|\boldsymbol{\sigma}^{\e}(\u_{\e}(t),z)\|_{\H}^2\pi(\d t,\d z)\right)^{\frac{p}{2}}\right]\nonumber\\&\leq\frac{1}{2}\E\left[\sup_{t\in[0,T]}\|\u_{\e}(t)\|_{\H}^{2p}\right]+\E\left[\left(\int_0^T\int_{\R_0}\|\boldsymbol{\sigma}^{\e}(\u_{\e}(t),z)\|_{\H}^2\pi(\d t,\d z)\right)^p\right]\nonumber\\&\leq\frac{1}{2}\E\left[\sup_{t\in[0,T]}\|\u_{\e}(t)\|_{\H}^{2p}\right]+C(K_2+K_1^p)\left(T^{p}+(1+T^{p-1})\E\left[\int_0^T\|\u_{\e}(t)\|_{\H}^{2p}\d t\right]\right). 
\end{align}
Moreover, using Hypothesis \ref{hyp2}, we have 
\begin{align}\label{3p21}
&	C\E\left[\left(\int_0^T\|\F(\u_{\e}(t))\|_{\H}^2\d t\right)^{\frac{p}{2}}\left(\int_0^T\|\u_{\e}(t)\|_{\H}^2\d t\right)^{\frac{p}{2}}\right]\nonumber\\&\leq C\E\left[\left(\int_0^T\|\F(\u_{\e}(t))\|_{\H}^2\d t\right)^{p}\right]+C\E\left[\left(\int_0^T\|\u_{\e}(t)\|_{\H}^2\d t\right)^{p}\right]\nonumber\\&\leq CK_1^pT^p+C(K_1+1)T^{p-1}\E\left[\int_0^T\|\u(t)\|_{\H}^{2p}\d t\right]. 
\end{align}
Combining \eqref{3p19}-\eqref{3p21} and substituting it in \eqref{3p18}, we get 
\begin{align}
	&	\E\left[\sup_{t\in[0,T]}\|\u_{\e}(t)\|_{\H}^{2p}+\left(\int_0^T\|\u_{\e}(t)\|_{\V}^2\d t\right)^{p}+\left(\int_0^T\|\u_{\e}(t)\|_{\wi\L^{r+1}}^{r+1}\d t\right)^{p}\right]\nonumber\\&\leq C\left[\|\h\|_{\H}^{2p}+C_{\alpha,\beta}^{2p}+(K_2+K_1^p)\left(T^p+(1+T^{p-1})\E\left[\int_0^T\|\u_{\e}(t)\|_{\H}^{2p}\d t\right]\right)\right]. 
\end{align}
An application of Gronwall's inequality yields 
\begin{align}
	&	\E\left[\sup_{t\in[0,T]}\|\u_{\e}(t)\|_{\H}^{2p}+\left(\int_0^T\|\u_{\e}(t)\|_{\V}^2\d t\right)^p+\left(\int_0^T\|\u_{\e}(t)\|_{\wi\L^{r+1}}^{r+1}\d t\right)^p\right]\nonumber\\&\leq C\left\{\|\h\|_{\H}^{2p}+C_{\alpha,\beta}^{2p}+(K_2+K_1^p)T\right\}e^{\left\{CT(K_2+K_1^p)(1+T)\right\}}\nonumber\\&\leq C\left(1+\|\h\|_{\H}^{2p}\right), 
\end{align}
and \eqref{3p12} follows. 
	\end{proof}

Using similar methods, one can show the following result for the solutions of the system \eqref{2p11}:
\begin{lemma}\label{lem3.7}
	Under Hypothesis \ref{hyp1} (H.1) and $\h\in\H$,  the following norm estimate holds for the solution $\u(\cdot)$ of the system \eqref{2p11}:
	\begin{align}
		\left\{\E\left[\sup_{0\leq t\leq T}\|\u(t)\|_{\H}^{2p}\right]+\E\left[\left(\int_0^T\|\u(t)\|_{\V}^2\d t\right)^p\right]+\E\left[\left(\int_0^T\|\u(t)\|_{\wi\L^{r+1}}^{r+1}\d t\right)^p\right]\right\}<\infty,
	\end{align}
	for any $p\geq 2$. 
\end{lemma}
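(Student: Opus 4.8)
The plan is to mirror the proof of Lemma~\ref{lem3.6}, since the statement for the Brownian-driven system \eqref{2p11} is the Gaussian analogue of the pure-jump estimate just established. First I would invoke the It\^o formula for $\|\u(t)\|_{\H}^2$ associated with \eqref{2p11}, which reads
\begin{align*}
	\|\u(t)\|_{\H}^2+2\mu\int_0^t\|\u(s)\|_{\V}^2\d s+2\beta\int_0^t\|\u(s)\|_{\wi\L^{r+1}}^{r+1}\d s
	&=\|\h\|^2-2\alpha\int_0^t\|\u(s)\|_{\wi\L^{q+1}}^{q+1}\d s\\
	&\quad+2\int_0^t(\F(\u(s)),\u(s))\d s+\sum_{i=1}^m\int_0^t\|\sigma^i(\u(s))\|_{\H}^2\d s\\
	&\quad+2\sum_{i=1}^m\int_0^t(\sigma^i(\u(s)),\u(s))\d\W^i(s),
\end{align*}
$\mathbb{P}$-a.s. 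The key point is that the drift terms are handled exactly as before: the sign-indefinite pumping term $-2\alpha\int_0^t\|\u(s)\|_{\wi\L^{q+1}}^{q+1}\d s$ is absorbed using Young's inequality against the Forchheimer dissipation $2\beta\int_0^t\|\u(s)\|_{\wi\L^{r+1}}^{r+1}\d s$ (since $q<r$), producing the same constant $C_{\alpha,\beta}$, while the forcing and noise are controlled via the linear-growth bound coming from Hypothesis~\ref{hyp1}.

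Next I would take the supremum over $t\in[0,T]$, then the $p$-th moment, and estimate the stochastic integral. The main technical step is the Burkholder-Davis-Gundy inequality applied to the martingale $\sum_i\int_0^t(\sigma^i(\u(s)),\u(s))\d\W^i(s)$; its quadratic variation is $\sum_i\int_0^t|(\sigma^i(\u(s)),\u(s))|^2\d s\leq\int_0^t\|\u(s)\|_{\H}^2\sum_i\|\sigma^i(\u(s))\|_{\H}^2\d s$, so by Cauchy-Schwarz and BDG one obtains a bound by
\[
	C\,\E\left[\sup_{t\in[0,T]}\|\u(t)\|_{\H}^{p}\left(\int_0^T\sum_{i=1}^m\|\sigma^i(\u(t))\|_{\H}^2\d t\right)^{\frac{p}{2}}\right],
\]
which after Young's inequality splits into $\tfrac12\E[\sup_t\|\u(t)\|_{\H}^{2p}]$ (absorbed on the left) plus a term dominated using the linear growth of $\sigma^i$ by $C(1+\E\int_0^T\|\u(t)\|_{\H}^{2p}\d t)$. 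The $L^2$-a.s. estimate $\E[\sup_t\|\u\|_{\H}^2]<\infty$ first follows from a preliminary Gronwall argument (as in \eqref{3p16}--\eqref{3p17}), and is needed to justify that the $\sup_t\|\u\|_{\H}^{2p}$ term is finite before absorbing it.

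Closing the argument requires a Gronwall inequality applied to $\phi(T):=\E[\sup_{t\in[0,T]}\|\u(t)\|_{\H}^{2p}]$, yielding $\phi(T)\leq C(1+\|\h\|_{\H}^{2p})$ with a constant depending on $\alpha,\beta,L_1,T,p$; the bounds on the $\V$ and $\wi\L^{r+1}$ integrals then follow by moving the dissipation terms back to the left-hand side of the It\^o formula, exactly as in the passage from \eqref{3p18} to \eqref{3p12}. I expect no genuine obstacle here, as the computation is strictly simpler than in Lemma~\ref{lem3.6}: the Brownian stochastic integral is continuous so there is no jump term requiring the Kunita-type estimate (Remark 2.5 of \cite{JZZB}), and the moment bound $\int_{\R_0}\|\sigma^{\e}\|_{\H}^{2p}\lambda(\d z)$ of Hypothesis~\ref{hyp2} has no Gaussian counterpart to verify. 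The only point demanding mild care is ensuring the linear-growth constant $K$ from Hypothesis~\ref{hyp1} enters uniformly across all the $\F$ and $\sigma^i$ estimates, which is immediate. This is why the result is asserted to hold for \emph{any} $p\geq2$ rather than the restricted range of Lemma~\ref{lem3.6}.
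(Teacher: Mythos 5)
Your proposal is correct and is precisely the argument the paper intends: the paper gives no separate proof of Lemma \ref{lem3.7}, stating only that it follows ``using similar methods'' to Lemma \ref{lem3.6}, and your adaptation (It\^o formula for $\|\u(t)\|_{\H}^2$, absorbing the $\alpha$-term via Young's inequality against the Forchheimer dissipation, Burkholder--Davis--Gundy for the continuous martingale, and Gronwall) is that adaptation. Your closing observation is also accurate: the restriction on $p$ in Lemma \ref{lem3.6} comes from the $2p$-moment condition \eqref{33} in Hypothesis \ref{hyp2}, whereas the linear-growth bound of Hypothesis \ref{hyp1} yields all polynomial moments, which is why the Gaussian estimate holds for every $p\geq 2$.
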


\begin{lemma}\label{lem3.8}
Under Hypotheses \ref{hyp2} and \ref{hyp4}, for $\h\in\V$, we have 
	\begin{align}\label{323}
		&\sup_{0<\e\leq\e_0}\left\{\E\left[\sup_{0\leq t\leq T}\|\u_{\e}(t)\|_{\V}^2\right]+\E\left[\int_0^T\|\A\u_{\e}(t)\|_{\H}^2\d t\right]+\E\left[\int_0^T\|\u_{\e}(t)\|_{\wi\L^{p(r+1)}}^{r+1}\d t\right]\right\}\nonumber\\&\leq C(1+\|\h\|_{\V}^2)<\infty,
	\end{align}
where $p\in[2,\infty)$ for $d=2$ and $p=3$ for $d=3$.
\end{lemma}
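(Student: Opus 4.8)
The plan is to run the $\V$-level energy argument: apply the infinite-dimensional It\^o formula behind \eqref{3p6} to the functional $\u\mapsto\|\u\|_\V^2=\langle\A\u,\u\rangle$ along the solution $\u_\e$ of \eqref{3p1}. Since \eqref{3p1} holds only in $\V'+\wi\L^{\frac{r+1}{r}}$ and testing with $\A\u_\e$ presupposes $\u_\e\in\D(\A)$, I would first carry out the computation on the finite-dimensional Galerkin projections, derive the bounds uniformly in the projection dimension \emph{and} in $\e$, and then recover \eqref{323} by weak/weak-$*$ lower semicontinuity. Formally the identity reads
\begin{align*}
&\|\u_\e(t)\|_\V^2 + 2\mu\int_0^t\|\A\u_\e(s)\|_\H^2\d s + 2\beta\int_0^t(\mathcal{C}(\u_\e(s)),\A\u_\e(s))\d s \\
&= \|\h\|_\V^2 - 2\int_0^t b(\u_\e(s),\u_\e(s),\A\u_\e(s))\d s - 2\alpha\int_0^t(\widetilde{\mathcal{C}}(\u_\e(s)),\A\u_\e(s))\d s \\
&\quad + 2\int_0^t(\F(\u_\e(s)),\A\u_\e(s))\d s + \int_0^t\int_{\R_0}\|\boldsymbol{\sigma}^\e(\u_\e(s),z)\|_\V^2\lambda(\d z)\d s + \mathrm{M}_\e(t),
\end{align*}
where $\mathrm{M}_\e(t)=\int_0^t\int_{\R_0}\big(\|\u_\e(s-)+\boldsymbol{\sigma}^\e(\u_\e(s-),z)\|_\V^2-\|\u_\e(s-)\|_\V^2\big)\wi{\pi}(\d s,\d z)$ is a mean-zero martingale and the $\lambda(\d z)\d s$-integral is the compensator correction.

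Next I would dispose of the benign terms. The Forchheimer term is turned into a gain: by the well-known positivity identities for $(\mathcal{C}(\u),\A\u)$ on the torus (cf.\ \cite{MTM4,MTM5}) one has both $(\mathcal{C}(\u_\e),\A\u_\e)\geq\||\u_\e|^{\frac{r-1}{2}}\nabla\u_\e\|_{\mathrm{L}^2}^2$ and $(\mathcal{C}(\u_\e),\A\u_\e)\geq c_r\|\nabla|\u_\e|^{\frac{r+1}{2}}\|_{\mathrm{L}^2}^2$, and Sobolev's embedding ($\mathrm{H}^1(\mathbb{T}^2)\hookrightarrow\mathrm{L}^{2p}(\mathbb{T}^2)$ for every $p<\infty$ and $\mathrm{H}^1(\mathbb{T}^3)\hookrightarrow\mathrm{L}^6(\mathbb{T}^3)$, i.e.\ $p=3$) converts the second gain into control of $\int_0^t\|\u_\e\|_{\wi\L^{p(r+1)}}^{r+1}\d s$, producing the third quantity in \eqref{323}. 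The forcing term is handled by Young's inequality, $2(\F(\u_\e),\A\u_\e)\leq\frac{\mu}{2}\|\A\u_\e\|_\H^2+C\|\F(\u_\e)\|_\H^2$, whose second factor is bounded by $C(1+\|\u_\e\|_\H^2)\leq C(1+\lambda_1^{-1}\|\u_\e\|_\V^2)$ via Hypothesis \ref{hyp2} and \eqref{poin}; the lower-order term $(\widetilde{\mathcal{C}}(\u_\e),\A\u_\e)$ is absorbed analogously since $q<r$ (a fraction of $\mu\|\A\u_\e\|_\H^2$ plus a power of $\|\u_\e\|_{\wi\L^{r+1}}$ dominated by interpolation). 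The jump compensator is precisely where the new Hypothesis \ref{hyp4} enters: it gives $\int_{\R_0}\|\boldsymbol{\sigma}^\e(\u_\e,z)\|_\V^2\lambda(\d z)\leq C(1+\|\u_\e\|_\V^2)$ uniformly in $\e\in(0,\e_0]$, feeding directly into the Gronwall term.

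The main obstacle is the convective term $b(\u_\e,\u_\e,\A\u_\e)$, which must be absorbed into the viscous and Forchheimer gains. In the two-dimensional periodic setting this is harmless, because $b(\u,\u,\A\u)=0$ for divergence-free $\u$ on $\mathbb{T}^2$, so the whole nonlinearity disappears and every $r\in[1,\infty)$ is admissible. In three dimensions I would use, for $r>3$, the estimate $|b(\u_\e,\u_\e,\A\u_\e)|\leq\frac{\mu}{2}\|\A\u_\e\|_\H^2+\frac{\beta}{2}\||\u_\e|^{\frac{r-1}{2}}\nabla\u_\e\|_{\mathrm{L}^2}^2+C\|\u_\e\|_\V^2$ (cf.\ \cite{MTM4,MTM5}), the first two summands being absorbed by the left-hand side while the last enters Gronwall; for the critical case $r=3$ one only has $|b(\u_\e,\u_\e,\A\u_\e)|\leq\||\u_\e|\nabla\u_\e\|_{\mathrm{L}^2}\|\A\u_\e\|_\H$, and Young's inequality together with the standing assumption $2\beta\mu\geq1$ is exactly what permits absorbing this product into $\mu\|\A\u_\e\|_\H^2+\beta\||\u_\e|\nabla\u_\e\|_{\mathrm{L}^2}^2$.

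Finally I would collect the estimates, take the supremum over $t\in[0,T]$, and then expectation. The martingale $\mathrm{M}_\e$ is controlled by the Burkholder--Davis--Gundy inequality: its jump increments are bounded by $2\|\u_\e\|_\V\|\boldsymbol{\sigma}^\e\|_\V+\|\boldsymbol{\sigma}^\e\|_\V^2$, so BDG and Young produce a term $\tfrac12\E[\sup_{s\leq T}\|\u_\e(s)\|_\V^2]$ (absorbed to the left) plus integrals of $\int_{\R_0}\|\boldsymbol{\sigma}^\e\|_\V^2\lambda(\d z)$ controlled by Hypothesis \ref{hyp4} together with the moment bounds of Lemma \ref{lem3.6}. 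After absorption, all remaining constants are independent of $\e$, so Gronwall's inequality yields $\E[\sup_{t\leq T}\|\u_\e\|_\V^2]+\E\int_0^T\|\A\u_\e\|_\H^2\d t+\E\int_0^T\|\u_\e\|_{\wi\L^{p(r+1)}}^{r+1}\d t\leq C(1+\|\h\|_\V^2)$ uniformly in $\e$, which is \eqref{323}. I expect the convective-term absorption in the critical case $r=3$ (hence the role of $2\beta\mu\geq1$) and the uniform-in-$\e$ treatment of the jump and compensator terms via Hypothesis \ref{hyp4} to be the delicate points.
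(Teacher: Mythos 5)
Your proposal follows essentially the same route as the paper's proof: It\^o's formula for $\|\u_{\e}(\cdot)\|_{\V}^2$ justified through Galerkin approximations, the torus identities relating $(\mathcal{C}(\u_{\e}),\A\u_{\e})$ to $\||\u_{\e}|^{\frac{r-1}{2}}\nabla\u_{\e}\|_{\H}^2$ and $\|\nabla|\u_{\e}|^{\frac{r+1}{2}}\|_{\H}^2$ (the latter giving the $\wi\L^{p(r+1)}$ bound via Sobolev embedding), absorption of the convective and Darcy terms into the viscous and Forchheimer gains with $(\B(\u_{\e}),\A\u_{\e})=0$ in 2D, Hypothesis \ref{hyp4} for the compensator, and Burkholder--Davis--Gundy plus Gronwall. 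The one point to double-check is your claim that $2\beta\mu\geq 1$ suffices in the critical case $d=r=3$: the paper's $\theta$-splitting in \eqref{3p34} needs $2\theta\beta-\frac{1}{\theta\mu}>0$ for some $\theta<1$, i.e.\ the strict inequality $2\beta\mu>1$, and at equality the Forchheimer gain is entirely consumed by the convective term, leaving nothing to absorb the lower-order term $\alpha\widetilde{\mathcal{C}}(\u_{\e})$, which the paper also feeds into $\||\u_{\e}|\nabla\u_{\e}\|_{\H}^2$.
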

\begin{proof}
	Through Galerkin approximations, one can  show that for $0<\e\leq\e_0$, the strong  solution $\u_{\e}$ of \eqref{3p1} has the regularity  $$\u_{\e}\in\mathrm{L}^2(\Omega;\mathrm{L}^{\infty}(0,T;\V)\cap\mathrm{L}^{2}(0,T;\D(\A)))\cap\mathrm{L}^{r+1}(\Omega;\mathrm{L}^{r+1}(0,T;\wi\L^{p(r+1)}))$$ ($p\in[2,\infty)$ for $d=2$ and $p=3$ for $d=3$) having a modification with paths in $\D([0,T];\V)$, $\mathbb{P}$-a.s. (cf. Theorem 3.11, \cite{MTM6}). Therefore, we derive the energy estimate \eqref{323} only. Applying It\^o's formula to the process $\|\u_{\e}(\cdot)\|_{\V}^2$, we find 
	\begin{align}\label{3p24}
	&	\|\u_{\e}(t)\|_{\V}^2+2\mu\int_0^t\|\A\u_{\e}(s)\|_{\H}^2\d s\nonumber\\&=\|\h\|_{\V}^2-2\int_0^t(\B(\u_{\e}(s)),\A\u_{\e}(s))\d s-2\alpha\int_0^t(\widetilde{\mathcal{C}}(\u_{\e}(s)),\A\u_{\e}(s))\d s\nonumber\\&\quad-2\beta\int_0^t(\mathcal{C}(\u_{\e}(s)),\A\u_{\e}(s))\d s+2\int_0^t(\F(\u_{\e}(s)),\A\u_{\e}(s))\d s\nonumber\\&\quad+\int_0^t\int_{\R_0}\|\boldsymbol{\sigma}^{\e}(\u_{\e}(s),z)\|_{\V}^2\pi(\d s,\d z)+2\int_0^t\int_{\R_0}(\nabla\boldsymbol{\sigma}^{\e}(\u_{\e}(s-),z),\nabla\u_{\e}(s-))\wi{\pi}(\d s,\d z),\ \mathbb{P}\text{-a.s.}, 
	\end{align}
Note that 
\begin{align}\label{3p25}
	&\int_{\mathbb{T}^d}(-\Delta\u_{\e}(x))\cdot|\u_{\e}(x)|^{r-1}\u_{\e}(x)\d x\nonumber\\&=\int_{\mathbb{T}^d}|\nabla\u_{\e}(x)|^2|\u_{\e}(x)|^{r-1}\d x+4\left[\frac{r-1}{(r+1)^2}\right]\int_{\mathbb{T}^d}|\nabla|\u_{\e}(x)|^{\frac{r+1}{2}}|^2\d x\nonumber\\&=\int_{\mathbb{T}^d}|\nabla\u_{\e}(x)|^2|\u_{\e}(x)|^{r-1}\d x+\frac{r-1}{4}\int_{\mathbb{T}^d}|\u_{\e}(x)|^{r-3}|\nabla|\u_{\e}(x)|^2|^2\d x.
\end{align}
On a torus, the operators $\mathcal{P}$ and $-\Delta$ commutes, and we have the following result (see Lemma 2.1, \cite{KWH}): 
\begin{align}\label{370}
	0&\leq\int_{\mathbb{T}^d}|\nabla\u_{\e}(x)|^2|\u_{\e}(x)|^{r-1}\d x\leq\int_{\mathbb{T}^d}|\u_{\e}(x)|^{r-1}\u_{\e}(x)\cdot\A\u_{\e}(x)\d x\nonumber\\&\leq r\int_{\mathbb{T}^d}|\nabla\u_{\e}(x)|^2|\u_{\e}(x)|^{r-1}\d x.
\end{align} 
By using \eqref{370}, the term $2\alpha\int_0^t(\widetilde{\mathcal{C}}(\u_{\e}(s)),\A\u_{\e}(s))\d s$ can be estimated as 
\begin{align}
	2\alpha\left|\int_0^t(\widetilde{\mathcal{C}}(\u_{\e}(s)),\A\u_{\e}(s))\d s\right|&\leq 2\alpha q \int_{\mathbb{T}^d}|\nabla\u_{\e}(x)|^2|\u_{\e}(x)|^{q-1}\d x\nonumber\\&=2\alpha q \int_{\mathbb{T}^d}|\nabla\u_{\e}(x)|^{2\left(\frac{r-1}{q-1}\right)}|\u_{\e}(x)|^{q-1}|\nabla\u_{\e}(x)|^{2\left(\frac{q-r}{q-1}\right)}\d x\nonumber\\&\leq \frac{\beta}{2}\int_{\mathbb{T}^d}|\nabla\u_{\e}(x)|^2|\u_{\e}(x)|^{r-1}\d x+\zeta\int_{\mathbb{T}^d}|\nabla\u_{\e}(x)|^2\d x,
	\end{align}
where $\zeta=(2\alpha q)^{\frac{r-1}{r-q}}\left(\frac{r-q}{r-1}\right)\left(\frac{2(q-1)}{\beta(r-1)}\right)^{\frac{q-1}{r-q}}$.  For $d=3$, we estimate $|(\B(\u_{\e}),\A\u_{\e})|$ using H\"older's and Young's inequalities as 
\begin{align}\label{275}
	|(\B(\u_{\e}),\A\u_{\e})|&\leq\||\u_{\e}||\nabla\u_{\e}|\|_{\H}\|\A\u_{\e}\|_{\H}\leq\frac{\mu }{4}\|\A\u_{\e}\|_{\H}^2+\frac{1}{\mu }\||\u_{\e}||\nabla\u_{\e}|\|_{\H}^2. 
\end{align}
For $r>3$, we  estimate the final term from \eqref{275} using H\"older's and Young's inequalities as 
\begin{align*}
	&	\int_{\mathbb{T}^d}|\u_{\e}(x)|^2|\nabla\u_{\e}(x)|^2\d x\nonumber\\&=\int_{\mathbb{T}^d}|\u_{\e}(x)|^2|\nabla\u_{\e}(x)|^{\frac{4}{r-1}}|\nabla\u_{\e}(x)|^{\frac{2(r-3)}{r-1}}\d x\nonumber\\&\leq\left(\int_{\mathbb{T}^d}|\u_{\e}(x)|^{r-1}|\nabla\u_{\e}(x)|^2\d x\right)^{\frac{2}{r-1}}\left(\int_{\mathbb{T}^d}|\nabla\u_{\e}(x)|^2\d x\right)^{\frac{r-3}{r-1}}\nonumber\\&\leq\frac{\beta\mu }{4}\left(\int_{\mathbb{T}^d}|\u_{\e}(x)|^{r-1}|\nabla\u_{\e}(x)|^2\d x\right)+\vartheta\left(\int_{\mathbb{T}^d}|\nabla\u_{\e}(x)|^2\d x\right),
\end{align*}
where $\vartheta=\frac{r-3}{r-1}\left(\frac{8}{\beta\mu (r-1)}\right)^{\frac{2}{r-3}}$. Therefore, from \eqref{275}, we have 
\begin{align}
	|(\B(\u_{\e}),\A\u_{\e})|&\leq\frac{\mu }{4}\|\A\u_{\e}\|_{\H}^2+\frac{\beta}{4}\||\u_{\e}|^{\frac{r-1}{2}}\nabla\u_{\e}\|_{\H}^2+\frac{\vartheta}{\mu}\|\nabla\u_{\e}\|_{\H}^2. 
\end{align}
We estimate $|(\F(\u_{\e}),\A\u_{\e})|$ using H\"older's and Young's inequalities, and Hypothesis \ref{hyp2} as 
\begin{align}\label{3p29}
	|(\F(\u_{\e}),\A\u_{\e})|&\leq\|\F(\u_{\e})\|_{\H}\|\A\u_{\e}\|_{\H}\leq\frac{\mu}{4}\|\A\u_{\e}\|_{\H}^2+\frac{1}{\mu}\|\F(\u_{\e})\|_{\H}^2\nonumber\\&\leq \frac{\mu}{4}\|\A\u_{\e}\|_{\H}^2+\frac{K_1}{\mu}(1+\|\u_{\e}\|_{\H}^2). 
\end{align}
Combining \eqref{3p25}-\eqref{3p29} and then substituting it in \eqref{3p24}, we deduce $\mathbb{P}$-a.s.,
\begin{align}
		&	\|\u_{\e}(t)\|_{\V}^2+\mu\int_0^t\|\A\u_{\e}(s)\|_{\H}^2\d s+\beta\int_0^t\||\u_{\e}(s)|^{\frac{r-1}{2}}\nabla\u_{\e}(s)\|_{\H}^2\d s\nonumber\\&\quad+8\beta\left[\frac{r-1}{(r+1)^2}\right]\int_0^t\|\nabla|\u_{\e}(s)|^{\frac{r+1}{2}}\|_{\H}^2\d s\nonumber\\&\leq\|\h\|_{\V}^2+2\left(\frac{\vartheta}{\mu}+\zeta\right)\int_0^t\|\nabla\u_{\e}(s)\|_{\H}^2\d s+\frac{2K_1}{\mu}\int_0^t\left(1+\|\u_{\e}(s)\|_{\H}^2\right)\d s\nonumber\\&\quad+\int_0^t\int_{\R_0}\|\boldsymbol{\sigma}^{\e}(\u_{\e}(s),z)\|_{\V}^2\pi(\d s,\d z)+2\int_0^t\int_{\R_0}(\nabla\boldsymbol{\sigma}^{\e}(\u_{\e}(s-),z),\nabla\u_{\e}(s-))\wi{\pi}(\d s,\d z).
\end{align}
Taking supremum over time $t\in[0,T]$ and expectation, and then  using calculations similar to \eqref{3p15}, we obtain 
\begin{align}\label{3p31}
&\E\left[\sup_{t\in[0,T]}	\|\u_{\e}(t)\|_{\V}^2+2\mu\int_0^T\|\A\u_{\e}(t)\|_{\H}^2\d t+2\beta\int_0^T\||\u_{\e}(t)|^{\frac{r-1}{2}}\nabla\u_{\e}(t)\|_{\H}^2\d t\right]	\nonumber\\&\leq\|\h\|_{\V}^2+4\left(\frac{\vartheta}{\mu}+\zeta\right)\E\left[\int_0^T\|\u_{\e}(t)\|_{\V}^2\d t\right]+\frac{4K_1}{\mu}\E\left[\int_0^T\left(1+\|\u_{\e}(t)\|_{\H}^2\right)\d t\right]\nonumber\\&\quad+C\E\left[\int_0^T\int_{\R_0}\|\boldsymbol{\sigma}^{\e}(\u_{\e}(t),z)\|_{\V}^2\lambda(\d z)\d t\right]\nonumber\\&\leq \|\h\|_{\V}^2+4\left(\frac{\vartheta}{\mu}+\zeta+C\right)\E\left[\int_0^T\|\u_{\e}(t)\|_{\V}^2\d t\right]+\frac{4K_1}{\mu}\E\left[\int_0^T\left(1+\|\u_{\e}(t)\|_{\H}^2\right)\d t\right]\nonumber\\&\leq C(1+\|\h\|_{\V}^2),
\end{align}
where we have used Hypothesis \ref{hyp3} and \eqref{3p17}. For $d=2$, we know that $(\B(\u_{\e}),\A\u_{\e})=0$ (Lemma 3.1, \cite{Te1}), so that the estimate \eqref{3p31} holds true for all $r\in[1,\infty)$. 

For $d=r=3$, we estimate $	|(\B(\u_{\e}),\A\u_{\e})|$, $	|(\widetilde{\mathcal{C}}(\u_{\e}),\A\u_{\e})|$ and $	|(\F(\u_{\e}),\A\u_{\e})|$ as 
\begin{align}
		|(\B(\u_{\e}),\A\u_{\e})|&\leq\||\u_{\e}||\nabla\u_{\e}|\|_{\H}\|\A\u_{\e}\|_{\H}\leq\frac{\theta\mu}{2}\|\A\u_{\e}\|_{\H}^2+\frac{1}{2\theta\mu }\||\u_{\e}||\nabla\u_{\e}|\|_{\H}^2, \\
		|(\widetilde{\mathcal{C}}(\u_{\e}),\A\u_{\e})|&\leq\frac{(1-\theta)\beta}{\alpha}\||\nabla\u_{\e}||\u_{\e}|\|_{\H}+\widetilde{\zeta}\|\nabla\u_{\e}\|_{\H}^2,\\
		|(\F(\u_{\e}),\A\u_{\e})|&\leq\|\F(\u_{\e})\|_{\H}\|\A\u_{\e}\|_{\H}\leq\frac{\theta\mu}{2}\|\A\u_{\e}\|_{\H}^2+\frac{1}{2\theta\mu}\|\F(\u_{\e})\|_{\H}^2,
\end{align}
for $0<\theta< 1$, where $\widetilde{\zeta}=\left(\frac{r-q}{r-1}\right)\left(\frac{\alpha(q-1)}{(1-\theta)\beta(r-1)}\right)^{\frac{q-1}{r-q}}$. Therefore, a calculation similar to \eqref{3p31} yields 
\begin{align}\label{3p34}
	&\E\left[\sup_{t\in[0,T]}	\|\u_{\e}(t)\|_{\V}^2+2(1-\theta)\mu\int_0^T\|\A\u_{\e}(t)\|_{\H}^2\d t+\left(2\theta\beta-\frac{1}{\theta\mu}\right)\int_0^T\||\u_{\e}(t)|^{\frac{r-1}{2}}\nabla\u_{\e}(t)\|_{\H}^2\d t\right]\nonumber\\&\leq C(\mu,\beta,T)(1+\|\h\|_{\V}^2),
\end{align}
and for $2\beta\mu>1$, the required result follows. 
\end{proof}

\begin{proposition}\label{prop3.9}
	For $\h\in\V$, under Hypotheses \ref{hyp2} and \ref{hyp3}, the family $\{\u_{\e}:0<\e\leq\e_0\}$ is tight in the space $\D([0,T];\V)$. 
\end{proposition}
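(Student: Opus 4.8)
The plan is to apply Aldous's tightness criterion in the Skorokhod space $\D([0,T];\V)$, which reduces the problem to two ingredients, both measured in the $\V$-norm: a compact containment condition for the time marginals $\{\u_\e(t)\}_{0<\e\le\e_0}$, and an Aldous-type bound on the increments of $\u_\e$ over small (random) time intervals. The decisive inputs are the uniform estimates of Lemma \ref{lem3.8}, which provide $\sup_{0<\e\le\e_0}\{\E[\sup_{t}\|\u_\e(t)\|_\V^2]+\E[\int_0^T\|\A\u_\e(t)\|_\H^2\d t]\}<\infty$, together with the $\V$-valued noise assumption of Hypothesis \ref{hyp4}. These are exactly what makes the $\V$-level (rather than merely $\H$-level) analysis possible, and this is where the stronger hypotheses $\h\in\V$ and $\boldsymbol{\sigma}^{\e}(\cdot,z)\in\V$ are consumed.

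For the compact containment I would exploit the compact embedding $\D(\A)=\V\cap\mathring{\H}^2_p(\mathbb{T}^d)\hookrightarrow\hookrightarrow\V$, valid on the torus by the Rellich theorem. By Lemma \ref{lem3.8}, $\sup_\e\E[\int_0^T\|\A\u_\e(t)\|_\H^2\d t]<\infty$, so that for a dense set of times $t$ the balls $K_R=\{\boldsymbol{v}:\|\A\boldsymbol{v}\|_\H\le R\}$, which are relatively compact in $\V$, satisfy $\sup_\e\P(\u_\e(t)\notin K_R)\le R^{-2}\sup_\e\E[\|\A\u_\e(t)\|_\H^2]\to0$ as $R\to\infty$ by Chebyshev's inequality; together with the uniform bound $\sup_\e\E[\sup_t\|\u_\e(t)\|_\V^2]<\infty$ this gives tightness of the laws of $\u_\e(t)$ in $\V$ for each such $t$ and at $t=T$.

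The heart of the matter is the Aldous condition. Using the integral form \eqref{3p1} I would split $\u_\e=\boldsymbol{\Phi}_\e+\boldsymbol{\Psi}_\e$, where $\boldsymbol{\Phi}_\e(t)=\h-\int_0^t[\mu\A\u_\e+\B(\u_\e)+\alpha\widetilde{\mathcal{C}}(\u_\e)+\beta\mathcal{C}(\u_\e)-\F(\u_\e)]\d s$ is the continuous finite-variation part and $\boldsymbol{\Psi}_\e(t)=\int_0^t\int_{\R_0}\boldsymbol{\sigma}^{\e}(\u_\e(s-),z)\wi\pi(\d s,\d z)$ is the purely discontinuous martingale carrying all the jumps. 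For a stopping time $\tau\le T$ and $\theta\le\theta_0$, the jump part is controlled directly at the $\V$-level by the isometry for the compensated integral and Hypothesis \ref{hyp4}: $\E[\|\boldsymbol{\Psi}_\e((\tau+\theta)\wedge T)-\boldsymbol{\Psi}_\e(\tau)\|_\V^2]=\E[\int_\tau^{\tau+\theta}\int_{\R_0}\|\boldsymbol{\sigma}^{\e}(\u_\e(s),z)\|_\V^2\lambda(\d z)\d s]\le C\theta(1+\sup_\e\E[\sup_t\|\u_\e(t)\|_\V^2])\le C\theta$, which is precisely the place where the requirement that $\boldsymbol{\sigma}^{\e}$ take values in $\V$ (Hypothesis \ref{hyp4}) is indispensable and weaker $\H$-assumptions would not suffice.

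The finite-variation part is the main obstacle, since its dominant constituent $\mu\int_\tau^{\tau+\theta}\A\u_\e\,\d s$ cannot be estimated directly in the $\V$-norm (that would demand $\D(\A^{3/2})$-control, which is unavailable from Lemma \ref{lem3.8}). Here I would first get smallness only in $\H$ via Cauchy--Schwarz, $\|\int_\tau^{\tau+\theta}\A\u_\e\,\d s\|_\H\le\theta^{1/2}(\int_\tau^{\tau+\theta}\|\A\u_\e\|_\H^2\d s)^{1/2}$, using the $\mathrm{L}^2(0,T;\D(\A))$ bound, and treat the nonlinear drift terms analogously through \eqref{212} and \eqref{2.9a} together with Lemma \ref{lem3.8}; then interpolate this $\H$-smallness against the uniform $\V$-bound of $\boldsymbol{\Phi}_\e$ via $\|\boldsymbol{v}\|_{\V_s}\le\|\boldsymbol{v}\|_\H^{1-s}\|\boldsymbol{v}\|_\V^{s}$ to obtain smallness of the drift increment in every intermediate space $\V_s$, $s<1$. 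Combining the $\V$-controlled jump part with the drift increment and invoking the compact containment in $\V$ through the compact chain $\V\hookrightarrow\hookrightarrow\V_s$ allows one to upgrade to the increment control required by the Aldous criterion and thereby conclude tightness of $\{\u_\e:0<\e\le\e_0\}$ in $\D([0,T];\V)$. The delicate point throughout is this passage from the $\H$-topology, in which the drift is naturally estimated, to the $\V$-topology demanded by the statement; it is feasible only because Hypothesis \ref{hyp4} delivers the jump martingale directly in $\V$ while Lemma \ref{lem3.8} furnishes the $\D(\A)$-regularity that both closes the compact containment and drives the interpolation.
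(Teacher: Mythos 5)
Your proposal takes the proposition's statement at face value and tries to run Aldous's criterion entirely at the $\V$-level, but this overshoots what the estimates can deliver, and it is not what the paper does. The paper's own proof (consistent with how Proposition \ref{prop3.9} is invoked at the start of Theorem \ref{thm310}) establishes tightness in $\D([0,T];\H)$: it uses the compact embedding $\V\subset\H$, verifies compact containment through $\sup_{0<\e\leq\e_0}\mathbb{P}\{\sup_{0\leq t\leq T}\|\u_{\e}(t)\|_{\V}>L\}\leq CL^{-2}(1+\|\h\|_{\V}^2)$ from \eqref{323}, and verifies the Aldous increment condition with increments measured in the $\H$-norm, estimating the Stokes term by exactly your Cauchy--Schwarz-in-time device and the stochastic term by the $\H$-level It\^o isometry together with \eqref{32}; Hypothesis \ref{hyp4} is consumed only through Lemma \ref{lem3.8}, not in the martingale increment bound. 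Your two genuinely $\V$-level steps both contain gaps. First, the compact containment: from $\sup_{0<\e\leq\e_0}\E\big[\int_0^T\|\A\u_{\e}(t)\|_{\H}^2\d t\big]<\infty$ you cannot extract $\sup_{0<\e\leq\e_0}\E\big[\|\A\u_{\e}(t)\|_{\H}^2\big]<\infty$ at any fixed time $t$ --- the a.e.-$t$ finiteness holds with an exceptional null set depending on $\e$, so your ``dense set of times'' claim is unjustified, and no uniform-in-$\e$ fixed-time tightness of $\{\u_{\e}(t)\}$ in $\V$ follows from a time-integrated $\D(\A)$ bound.

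Second, and more fundamentally, the final ``upgrade'' is invalid: interpolating $\H$-smallness of the drift increment against the uniform $\V$-bound gives smallness only in $\V_s$ for $s<1$, and combining this with $\V$-boundedness and the compact embedding $\V\hookrightarrow\hookrightarrow\V_s$ yields tightness in $\D([0,T];\V_s)$ --- with $s=0$ this is precisely the paper's $\D([0,T];\H)$ conclusion --- but no compactness argument converts increment-smallness in a weaker norm into the $\V$-norm increment control that Aldous's criterion in $\D([0,T];\V)$ would require. As you yourself observe, genuine $\V$-increment control of $\mu\int_{\tau}^{\tau+\theta}\A\u_{\e}\d s$ would demand $\D(\A^{3/2})$-type bounds that Lemma \ref{lem3.8} does not provide; the honest conclusion of your interpolation scheme is the weaker tightness statement, which is what the paper proves and uses. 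A further, more minor mismatch: for the nonlinear drift you cite \eqref{212} and \eqref{2.9a}, which are $\V'$-estimates, whereas the increments must be bounded in $\H$; the paper instead bounds $\|\B(\u_{\e})\|_{\H}$ using $|x|\leq 1+|x|^{\frac{r-1}{2}}$ and the $\||\u_{\e}|^{\frac{r-1}{2}}\nabla\u_{\e}\|_{\H}$ control (with Agmon's inequality for $d=2$, $r\in[1,3]$), and bounds $\|\mathcal{C}(\u_{\e})\|_{\H}$, $\|\widetilde{\mathcal{C}}(\u_{\e})\|_{\H}$ by interpolation against the $\wi\L^{3(r+1)}$-bound of Lemma \ref{lem3.8}; these $\H$-norm estimates are an essential ingredient your sketch omits.
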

	\begin{proof}
	By	Rellich-Kondrachov theorem, we know that the embedding of $\V\subset\H,$ is compact. Therefore, by Aldou's tightness criterion (see Theorem 1, \cite{DAl}), it suffices to show that:
	\begin{enumerate}
		\item [(i)] for any $0<\eta<1$, there exists an $L_{\eta}>0$ such that 
		\begin{align}
			\sup_{0<\e\leq\e_0}\mathbb{P}\left\{\sup_{0\leq t\leq T}\|\u_{\e}(t)\|_{\V}>L_{\eta}\right\}<\eta;
		\end{align}
	\item [(ii)] for any stopping time $0\leq \tau^{\e}\leq T$ with respect to the natural filtration generated by $\{\u_{\e}(s):s\leq t\}$, and any $\eta>0$, 
	\begin{align}\label{336}
		\lim_{\delta\to 0}\sup_{0<\e\leq\e_0}\mathbb{P}\left\{\|\u_{\e}(\tau^{\e}+\delta)-\u_{\e}(\tau^{\e})\|_{\H}>\eta\right\}=0,
	\end{align}
where we have set $\tau^{\e}+\delta:=T\wedge(\tau^{\e}+\delta)$.
	\end{enumerate}

For any $L>0$, note  that \eqref{323} implies 
\begin{align}\label{3p37}
		\sup_{0<\e\leq\e_0}\mathbb{P}\left\{\sup_{0\leq t\leq T}\|\u_{\e}(t)\|_{\V}>L\right\}&\leq \frac{1}{L^2}\sup_{0<\e\leq\e_0}\E\left[\sup_{0\leq t\leq T}\|\u_{\e}(t)\|_{\V}^2\right]\leq\frac{C}{L^2}(1+\|\h\|_{\V}^2). 
\end{align}
Therefore given any $0<\eta<1$, we can find an $L$ large enough such that the right hand side of \eqref{3p37} can be made less than $\eta$, so that (i) is satisfied.

Let us now prove (ii). By an application of Markov's inequality, it is enough to show that 
\begin{align}\label{338}
	\E\left[\|\u_{\e}(\tau^{\e}+\delta)-\u_{\e}(\tau^{\e})\|_{\H}^{\xi}\right]\leq C\delta^{\theta},
\end{align}
for some $\xi,\theta>0$ and a $C>0$. From \eqref{3p1}, we have 
	\begin{align}
	\u_{\e}(t)&=\boldsymbol{h}-\mu\int_0^t\A\u_{\e}(s)\d s-\int_0^t\B(\u_{\e}(s))\d s-\alpha\int_0^t\widetilde{\mathcal{C}}(\u_{\e}(s))\d s-\beta\int_0^t\mathcal{C}(\u_{\e}(s))\d s\nonumber\\&\quad+\int_0^t\F(\u_{\e}(s))\d s+\int_0^t\int_{\R_0}\boldsymbol{\sigma}^{i,\e}(\u_{\e}(s-),z)\wi{\pi}(\d s,\d z)\nonumber\\&=\h+\sum\limits_{i=1}^6J_i^{\e}(t),
\end{align}
for all $t\in[0,T]$ in $\V'+\wi\L^{\frac{r+1}{r}}$. For $J_1^{\e}$, we use \eqref{323} to estimate as 
\begin{align}\label{340}
\E\left[\|J_1^{\e}(\tau^{\e}+\delta)-J_1^{\e}(\tau^{\e})\|_{\H}\right]&\leq \mu\E\left[\int_{\tau^{\e}}^{\tau^{\e}+\delta}\|\A\u_{\e}(s)\|_{\H}\d s\right]\leq \mu\delta^{\frac{1}{2}}\E\left[\left(\int_{\tau^{\e}}^{\tau^{\e}+\delta}\|\A\u_{\e}(s)\|_{\H}^2\d s\right)^{\frac{1}{2}}\right]\nonumber\\&\leq C\delta^{\frac{1}{2}}(1+\|\h\|_{\V}),
\end{align}
so that \eqref{338} is satisfied for $\xi=1$ and $\theta=\frac{1}{2}$. For $r\geq 3$, using the fact that $|x|\leq 1+|x|^{\frac{r-1}{2}}$, for all $x\in\R$,  we get 
\begin{align}
\E\left[\|J_2^{\e}(\tau^{\e}+\delta)-J_2^{\e}(\tau^{\e})\|_{\H}\right]&\leq\E\left[\int_{\tau^{\e}}^{\tau^{\e}+\delta}\|\B(\u_{\e}(s))\|_{\H}\d s\right]\leq\delta^{\frac{1}{2}}\E\left[\left(\int_{\tau^{\e}}^{\tau^{\e}+\delta}\|\B(\u_{\e}(s))\|_{\H}^2\d s\right)^{\frac{1}{2}}\right]\nonumber\\&\leq \delta^{\frac{1}{2}}\left\{\E\left[\int_0^T\||\u_{\e}(t)||\nabla\u_{\e}(t)|\|_{\H}^2\d t\right]\right\}^{\frac{1}{2}}\nonumber\\&\leq C\delta^{\frac{1}{2}}\left\{\E\left[\int_0^T\left(\|\nabla\u_{\e}(t)\|_{\H}^2+\||\u_{\e}(t)|^{\frac{r-1}{2}}|\nabla\u_{\e}(t)|\|_{\H}^2\right)\d t\right]\right\}^{\frac{1}{2}}\nonumber\\&\leq C\delta^{\frac{1}{2}}\left(1+\|\h\|_{\V}\right),
\end{align}
therefore \eqref{338} is satisfied for $\xi=1$ and $\theta=\frac{1}{2}$.  For $d=2$ and $r\in[1,3]$, we use H\"older's and Agmon's inequalities  to obtain 
\begin{align}
	\E\left[\|J_2^{\e}(\tau^{\e}+\delta)-J_2^{\e}(\tau^{\e})\|_{\H}\right]&\leq\E\left[\int_{\tau^{\e}}^{\tau^{\e}+\delta}\|\u_{\e}(s)\|_{\wi\L^{\infty}}\|\nabla\u_{\e}(s)\|_{\H}\d s\right]\nonumber\\&\leq \E\left[\int_{\tau^{\e}}^{\tau^{\e}+\delta}\|\u_{\e}(s)\|_{\H}^{\frac{1}{2}}\|\A\u_{\e}(s)\|_{\H}^{\frac{1}{2}}\|\nabla\u_{\e}(s)\|_{\H}\d s\right]\nonumber\\&\leq \delta^{\frac{3}{4}}\left\{\E\left[\sup_{s\in[\tau^{\e},\tau^{\e}+\delta]}\|\u_{\e}(s)\|_{\H}^2\right]\right\}^{\frac{1}{4}} \left\{\E\left[\sup_{s\in[\tau^{\e},\tau^{\e}+\delta]}\|\u_{\e}(s)\|_{\V}^2\right]\right\}^{\frac{1}{2}}\nonumber\\&\quad\times\left\{\E\left[\int_{\tau^{\e}}^{\tau^{\e}+\delta}\|\A\u_{\e}(s)\|_{\H}^2\d s\right]\right\}^{\frac{1}{4}}\nonumber\\&\leq C\delta^{\frac{3}{4}}(1+\|\h\|_{\V}),
\end{align}
and the condition \eqref{338} follows with  $\xi=1$ and $\theta=\frac{3}{4}$. Let us now consider 
\begin{align}\label{346}
	&\E\left[\|J_3^{\e}(\tau^{\e}+\delta)-J_3^{\e}(\tau^{\e})\|_{\H}\right]\nonumber\\&=\E\left[\int_{\tau^{\e}}^{\tau^{\e}+\delta}\|\mathcal{C}(\u_{\e}(s))\|_{\H}\d s\right]\leq\E\left[\int_{\tau^{\e}}^{\tau^{\e}+\delta}\|\u_{\e}(s)\|_{\wi\L^{2r}}^r\d s\right]\nonumber\\&\leq\E\left[\int_{\tau^{\e}}^{\tau^{\e}+\delta}\|\u_{\e}(s)\|_{\wi\L^{r+1}}^{\frac{r+3}{4}}\|\u_{\e}(s)\|_{\wi\L^{3(r+1)}}^{\frac{3(r-1)}{4}}\d s\right]\nonumber\\&\leq \delta^{\frac{1}{r+1}}\left\{\E\left[\int_{\tau^{\e}}^{\tau^{\e}+\delta}\|\u_{\e}(s)\|_{\wi\L^{r+1}}^{r+1}\d s\right]\right\}^{\frac{r+3}{4(r+1)}}\left\{\E\left[\int_{\tau^{\e}}^{\tau^{\e}+\delta}\|\u_{\e}(s)\|_{\wi\L^{3(r+1)}}^{r+1}\d s\right]\right\}^{\frac{3(r-1)}{4(r+1)}}\nonumber\\&\leq C\delta^{\frac{1}{r+1}}\left(1+\|\h\|_{\V}\right),
\end{align}
where we have used the interpolation inequality. Therefore, $J_3^{\e}$ satisfies condition \eqref{338} with $\xi=1$ and $\theta=\frac{1}{r+1}$. Using the interpolation and H\"older's inequalities and \eqref{346}, we find 
\begin{align}
	&\E\left[\|J_4^{\e}(\tau^{\e}+\delta)-J_4^{\e}(\tau^{\e})\|_{\H}\right]\nonumber\\&=\E\left[\int_{\tau^{\e}}^{\tau^{\e}+\delta}\|\widetilde{\mathcal{C}}(\u_{\e}(s))\|_{\H}\d s\right]\leq\E\left[\int_{\tau^{\e}}^{\tau^{\e}+\delta}\|\u_{\e}(s)\|_{\wi\L^{2q}}^q\d s\right]\nonumber\\&\leq\E\left[\sup_{t\in[0,T]}\|\u_{\e}(t)\|_{\H}^{\frac{r-q}{r-1}}\left(\int_{\tau^{\e}}^{\tau^{\e}+\delta}\|\u_{\e}(s)\|_{\wi\L^{2r}}^{\frac{r(q-1)}{r-1}}\d s\right)\right]\nonumber\\&\leq\delta^{\frac{r-q}{r-1}}\left\{\E\left[\sup_{t\in[0,T]}\|\u(t)\|_{\H}\right]\right\}^{\frac{r-q}{r-1}}\left\{\E\left[\int_0^T\|\u_{\e}(t)\|_{\wi\L^{2r}}^r\d t\right]\right\}^{\frac{q-1}{r-1}}\nonumber\\&\leq C\delta^{\frac{r(r+1)-(qr+1)}{(r-1)(r+1)}}\left(1+\|\h\|_{\V}\right),
	\end{align}
so that $J_4^{\e}$ satisfies condition \eqref{338} with $\xi=1$ and $\theta={\frac{r(r+1)-(qr+1)}{(r-1)(r+1)}}$. Using Hypothesis \ref{hyp2}, we infer 
\begin{align}
	&\E\left[\|J_5^{\e}(\tau^{\e}+\delta)-J_5^{\e}(\tau^{\e})\|_{\H}\right]\nonumber\\&=\E\left[\int_{\tau^{\e}}^{\tau^{\e}+\delta}\|\F(\u_{\e}(s))\|_{\H}\d s\right]\leq \delta\left\{\E\left[\sup_{s\in[\tau^{\e},\tau^{\e}+\delta]}\|\F(\u_{\e}(s))\|_{\H}^2\right]\right\}^{\frac{1}{2}}\nonumber\\&\leq K_1\delta \left\{\E\left[\sup_{s\in[\tau^{\e},\tau^{\e}+\delta]}(1+\|\u_{\e}(s)\|_{\H}^2)\right]\right\}^{\frac{1}{2}}\leq K_1\delta\left(1+\|\h\|_{\V}\right),
\end{align}
so that the condition \eqref{338} is satisfied with $\xi=\theta=1$. Using It\^o's isometry and Hypothesis \ref{hyp2}, we have 
\begin{align}\label{345}
	&\E\left[\|J_6^{\e}(\tau^{\e}+\delta)-J_6^{\e}(\tau^{\e})\|_{\H}^2\right]\nonumber\\&=\E\left[\left\|\int_{\tau^{\e}}^{\tau^{\e}+\delta}\int_{\R_0}\boldsymbol{\sigma}^{\e}(\u_{\e}(s-),z)\wi{\pi}(\d s,\d z)\right\|_{\H}^2\right]\nonumber\\&=\E\left[\int_{\tau^{\e}}^{\tau^{\e}+\delta}\int_{\R_0}\|\boldsymbol{\sigma}^{\e}(\u_{\e}(s),z)\|_{\H}^2\lambda(\d z)\d s\right]\nonumber\\&\leq K_1\delta\E\left[\sup_{s\in[\tau^{\e},\tau^{\e}+\delta]}(1+\|\u_{\e}(s)\|_{\H}^2)\right]\leq C\delta\left(1+\|\h\|_{\V}^2\right). 
\end{align}
Thus the condition \eqref{338} is satisfied with $\xi=2$ and $\theta=1$. Therefore combining \eqref{340}-\eqref{345} and using Markov's inequality, one can get 
\begin{align}
&	\sup_{0<\e\leq\e_0}\mathbb{P}\left\{\|\u_{\e}(\tau^{\e}+\delta)-\u_{\e}(\tau^{\e})\|_{\H}>\eta\right\}\nonumber\\&\leq\frac{1}{\eta^{\theta}}\E\left[\|\u_{\e}(\tau^{\e}+\delta)-\u_{\e}(\tau^{\e})\|_{\H}^{\theta}\right]\leq\frac{C\delta^{\theta}}{\eta^{\theta}}\to 0\ \text{ as }\ \delta\to 0
\end{align}
for any $\eta>0$. Hence (ii) is verified and the proof is completed. 
	\end{proof}

\subsection{Weak convergence} Let  $$\nu_{\e},\nu\ \text{ denote the laws of }\  \u_{\e}\ \text{ and }\ \u$$ on the spaces  $\D([0,T];\H)$ and $\C([0,T];\H)$, respectively. We prove  the weak convergence by two steps. We first establish  the weak convergence in Theorem \ref{thm310} under stronger conditions, and then we remove the extra assumptions and obtain  the general convergence result in Theorem \ref{thm3.11}.

\begin{theorem}\label{thm310}
	Let $\h\in\V$. Under Hypotheses \ref{hyp2}, \ref{hyp3} (H.3) and \ref{hyp4}, for any $T>0$, $\nu_{\e}$ converges weakly to $\nu$, as $\e\to 0$, on the space $\D([0,T];\H)$  equipped with the Skorokhod topology.
\end{theorem}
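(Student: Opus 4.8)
The plan is to combine tightness with an identification of every subsequential limit through a martingale problem, and then to invoke uniqueness in law. First I would observe that Proposition \ref{prop3.9} gives tightness of $\{\u_{\e}:0<\e\le\e_0\}$ in $\D([0,T];\V)$; since $\V\hookrightarrow\H$ is continuous, the identity map $\D([0,T];\V)\to\D([0,T];\H)$ is continuous, so the pushforward family $\{\nu_{\e}\}$ is tight on $\D([0,T];\H)$ too. By Prokhorov's theorem $\{\nu_{\e}\}$ is relatively compact, so along any subsequence there is a further subsequence with $\nu_{\e_n}\rightharpoonup\bar\nu$ on $\D([0,T];\H)$. It then suffices to prove $\bar\nu=\nu$, because then every subsequence admits a sub-subsequence converging to $\nu$, forcing $\nu_{\e}\rightharpoonup\nu$.

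For the identification I would set up the martingale problem attached to \eqref{2p11}. Taking $\v$ a finite linear combination of the eigenfunctions $\{\boldsymbol{e}_k\}$ of $\A$ and $\psi\in\C_b^2(\R)$, I define on $\D([0,T];\H)$, with canonical process $\x$,
\[
\mathcal{M}^{\psi,\v}(\x,t)=\psi(\langle\x(t),\v\rangle)-\psi(\langle\x(0),\v\rangle)-\int_0^t\psi'(\langle\x(s),\v\rangle)\langle\boldsymbol{G}(\x(s)),\v\rangle\d s-\frac12\int_0^t\psi''(\langle\x(s),\v\rangle)(\sigma(\x(s)),\v)^2\d s,
\]
where $\boldsymbol{G}(\u)=-\mu\A\u-\B(\u)-\alpha\widetilde{\mathcal{C}}(\u)-\beta\mathcal{C}(\u)+\F(\u)$ is the drift of \eqref{2p11}. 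The goal is to show $\mathcal{M}^{\psi,\v}(\cdot,t)$ is a $\bar\nu$-martingale; once established, pathwise uniqueness for \eqref{2p11} (\cite[Theorem 3.7]{MTM4}), Yamada--Watanabe, and \cite[Lemma 15]{SSS} imply the martingale problem has a unique solution of law $\nu$, so $\bar\nu=\nu$.

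To pass to the limit I would use the jump It\^o formula \eqref{3p6}: for each fixed $\e$ the functional $\mathcal{M}^{\psi,\v}_{\e}(\u_{\e},t)$, obtained by replacing the Gaussian term above with the jump generator $\int_{\R_0}[\psi(\langle\u_{\e},\v\rangle+(\boldsymbol{\sigma}^{\e}(\u_{\e},z),\v))-\psi(\langle\u_{\e},\v\rangle)-\psi'(\langle\u_{\e},\v\rangle)(\boldsymbol{\sigma}^{\e}(\u_{\e},z),\v)]\lambda(\d z)$, is a martingale. A second-order Taylor expansion rewrites this integrand as $\tfrac12\psi''(\langle\u_{\e},\v\rangle)(\boldsymbol{\sigma}^{\e},\v)^2$ plus a remainder bounded by $\tfrac12\,\omega_{\psi''}(\sup_z|(\boldsymbol{\sigma}^{\e},\v)|)\int_{\R_0}(\boldsymbol{\sigma}^{\e},\v)^2\lambda(\d z)$, with $\omega_{\psi''}$ the modulus of continuity of $\psi''$. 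Hypothesis \ref{hyp3}(H.3)(i) forces $\sup_z|(\boldsymbol{\sigma}^{\e},\v)|\le\|\v\|_{\H}\sup_z\|\boldsymbol{\sigma}^{\e}\|_{\H}\to0$ on bounded sets, while \eqref{32} keeps the $L^2$-integral bounded, so the remainder vanishes; expanding $(\boldsymbol{\sigma}^{\e},\v)^2=\sum_{k,j}(\boldsymbol{\sigma}^{\e},\boldsymbol{e}_k)(\boldsymbol{\sigma}^{\e},\boldsymbol{e}_j)(\v,\boldsymbol{e}_k)(\v,\boldsymbol{e}_j)$ and invoking Hypothesis \ref{hyp3}(H.3)(ii) shows $\int_{\R_0}(\boldsymbol{\sigma}^{\e},\v)^2\lambda(\d z)\to(\sigma(\x),\v)^2$, reproducing the exact Gaussian covariance. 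Combining the martingale identity $\E[(\mathcal{M}^{\psi,\v}_{\e}(t)-\mathcal{M}^{\psi,\v}_{\e}(s))\Phi]=0$ for bounded continuous $\Phi$ of the path up to time $s$, the weak convergence $\nu_{\e_n}\rightharpoonup\bar\nu$, and the uniform moment bounds of Lemmas \ref{lem3.6} and \ref{lem3.8} (which give uniform integrability and justify passing the limit through the time integrals of the unbounded drift $\boldsymbol{G}$) would yield the martingale property under $\bar\nu$; the $\D([0,T];\V)$ tightness is precisely what renders $\langle\B(\x),\v\rangle$, $\langle\mathcal{C}(\x),\v\rangle$ and $\langle\widetilde{\mathcal{C}}(\x),\v\rangle$ continuous along the converging paths, and extension from $\v\in\mathrm{span}\{\boldsymbol{e}_k\}$ to general test functions follows by density.

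The main obstacle I expect is the \emph{simultaneous} control, during the passage to the limit, of two competing effects: on one hand the jump-to-diffusion generator convergence under the weaker Hypothesis \ref{hyp3}, where one must extract the correct Gaussian covariance from (H.3)(ii) while annihilating the Taylor remainder using \emph{only} the uniform smallness (H.3)(i) and the $\mathrm{L}^2$-bound \eqref{32}, without any stronger integrability of $\boldsymbol{\sigma}^{\e}$; and on the other hand the unbounded nonlinear terms $\B,\mathcal{C},\widetilde{\mathcal{C}}$ in $\boldsymbol{G}$, which are not continuous on $\H$ and whose limits must be taken through the stronger $\V$-tightness and the $\D(\A)$-regularity estimate of Lemma \ref{lem3.8}. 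A secondary technical point is ensuring the evaluation functionals $\x\mapsto\langle\x(t),\v\rangle$ converge $\bar\nu$-a.s. in the Skorokhod topology; this is guaranteed by the continuity of the limiting paths, which itself follows either from the vanishing of jump heights in (H.3)(i) or a posteriori from the identification $\bar\nu=\nu$.
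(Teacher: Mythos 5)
Your overall architecture -- tightness, identification of every subsequential limit via a martingale problem, convergence of the jump generator to the diffusion generator through Hypothesis \ref{hyp3}, and then uniqueness in law -- is the same as the paper's, and the core computation (extracting the Gaussian covariance from (H.3)(ii) while killing the remainder with (H.3)(i) and \eqref{32}) is the right one. The paper avoids your Taylor-remainder step altogether by testing with the exact quadratics $f(\x)=(\x,\boldsymbol{e}_k)(\x,\boldsymbol{e}_j)$ and the linear functionals $(\x,\boldsymbol{e}_k)$, for which the jump generator \emph{equals} the second-order term, so the only limit to control is the covariance functional $\G_{\e}$; your $\psi\in\C_b^2$ cylindrical functions work too but cost you the modulus-of-continuity argument (which, as you note, must be combined with a Chebyshev cut on $\{\sup_t\|\u_{\e}(t)\|_{\H}>M\}$ since (H.3)(i) is only uniform on bounded sets).

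There are, however, three concrete gaps. First, the continuity of the limit paths cannot be deferred or obtained ``a posteriori from the identification $\bar\nu=\nu$'': the evaluation maps $\x\mapsto\x(t)$ are not continuous on $\D([0,T];\H)$, so you cannot pass to the limit in $\E^{\nu_{\e_n}}[(\mathcal{M}^{\psi,\v}_{\e_n}(t)-\mathcal{M}^{\psi,\v}_{\e_n}(s))\Phi]$ without first knowing that $\bar\nu$ charges $\C([0,T];\H)$ (whence the finite-dimensional distributions converge). The a-posteriori route is circular; this is exactly why the paper's Step S1 (maximal jump $\to0$ in probability via \eqref{3p7}, then Theorem 13.4 of \cite{PBi}) comes first. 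Second, your mechanism for the drift terms is not available: the tightness actually established in Proposition \ref{prop3.9} is in $\D([0,T];\H)$ (the Aldous increment condition \eqref{336} is verified only in the $\H$-norm), so you do not have convergence of paths in $\V$, and $\langle\mathcal{C}(\x),\v\rangle$, $\langle\widetilde{\mathcal{C}}(\x),\v\rangle$ are not continuous on $\H$. The paper instead passes to the limit in $\int_s^t\mathscr{L}f(X_r)\d r$ via \cite[Lemma 15]{SSS}, i.e.\ finite-dimensional convergence plus the uniform $(1+\epsilon)$-moment bounds \eqref{384} built from the estimates \eqref{385}--\eqref{388} and \eqref{3p12}; your argument needs to be rerouted through that device. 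Third, before Yamada--Watanabe can be invoked you must upgrade ``$\bar\nu$ solves the martingale problem'' to ``$\bar\nu$ is the law of a weak solution of \eqref{2p11}''; this is the content of the paper's Step S3, where the sharp bracket $\langle\M_k,\M_j\rangle$ is computed and the martingale representation result \cite[Lemma A.1]{SSTZ} produces the driving Brownian motion. That step is standard but not free, and your proposal omits it.
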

\begin{proof}
	From Proposition \ref{prop3.9}, we infer that the family $\{\nu_{\e}:0<\e\leq\e_0\}$ is tight in $\D([0,T];\H)$. Let $\nu_0$  be the weak limit of any convergent subsequence $\{\nu_{\e_n}\}_{n\geq 1}$. Our aim is to  show that $\nu_0=\nu$. The  proof is divided into the following three steps:  \begin{enumerate} \item  [{\bf S1.}]  $\nu_0$ is supported on the space $\C ([0, T ];\H)$, \item  [{\bf S2.}]  $\nu_0$ is a solution of a martingale problem, \item  [{\bf S3.}] $\nu_0$ is the law of a weak solution of stochastic CBFeD equations \eqref{2p11},
		\end{enumerate} 
	and one can complete the proof. 
	\vskip 0.2 cm 
	{\bf S1.} For $\eta>0,M>0$, we find 
	\begin{align}\label{3p47}
	&	\mathbb{P}\left\{\sup_{0\leq t\leq T}\|\u_{\e}(t)-\u_{\e}(t-)\|_{\H}\geq\eta \right\}\nonumber\\&\leq \mathbb{P}\left\{\sup_{0\leq t\leq T}\sup_{z\in\R_0}\|\sigma^{\e}(\u_{\e}(t-),z)\|_{\H}\geq\eta\right\}\nonumber\\&\leq \mathbb{P}\left\{\sup_{0\leq t\leq T}\sup_{z\in\R_0}\|\sigma^{\e}(\u_{\e}(t-),z)\|_{\H}\geq\eta,\sup_{0\leq t\leq T}\|\u_{\e}(t)\|_{\H}\leq M\right\}+\mathbb{P}\left\{\sup_{0\leq t\leq T}\|\u_{\e}(t)\|_{\H}>M\right\}\nonumber\\&\leq \mathbb{P}\left\{\sup_{\|\u\|_{\H}\leq M}\sup_{z\in\R_0}\|\sigma^{\e}(\u,z)\|_{\H}>\eta\right\}+\frac{1}{M^2}\sup_{0<\e\leq\e_0}\E\left[\sup_{0\leq t\leq T}\|\u_{\e}(t)\|_{\H}^2\right].
	\end{align}
Making use of  \eqref{3p7} and \eqref{3p17}, we first let $\e\to 0$ and then $M\to\infty$ in \eqref{3p47}, we obtain 
\begin{align}
	\sup_{0\leq t\leq T}\|\u_{\e}(t)-\u_{\e}(t-)\|_{\H} \xrightarrow{p} 0\ \text{ as }\ \e\to 0. 
\end{align}
Therefore, it follows from Theorem 13.4, \cite{PBi} that $\nu_0$ is supported on the space $\C ([0, T ];\H)$. As a consequence, the finite-dimensional distributions of $\nu_{\e_n}$ converge to that of $\nu_0$.

\vskip 0.2 cm 
	{\bf S2.} For $j,k\in\mathbb{N}$, let us take $f(\x)=(\x,\boldsymbol{e}_k)(\x,\boldsymbol{e}_j), \ \x\in\H$. The gradient of $f$ is denoted by $\nabla f$  and the operator associated with the second derivatives of $f$ is represented by $f''$. Then, $\nabla f$ and $f''$ are given by 
	\begin{align}
		(\nabla f(\x),\boldsymbol{h})&=(\boldsymbol{h},(\x,\boldsymbol{e}_j)\boldsymbol{e}_k+(\x,\boldsymbol{e}_k)\boldsymbol{e}_j),\\
		[f''(\x)(\boldsymbol{h}\otimes\boldsymbol{k})]&=(\boldsymbol{k},\boldsymbol{e}_j)(\boldsymbol{h},\boldsymbol{e}_k)
		+(\boldsymbol{k},\boldsymbol{e}_k)(\boldsymbol{h},\boldsymbol{e}_j),
	\end{align}
for all $\boldsymbol{h},\boldsymbol{k}\in\H$. 
Let us set 
\begin{align}
	\mathscr{L}^{\e}f(\x):&=-(\mu\A\x+\B(\x)+\beta\mathcal{C}(\x)+\F(\x),\nabla f(\x))\nonumber\\&\quad+\int_{\R_0}[f(\x+\boldsymbol{\sigma}^{\e}(\x,z))-f(\x)-(\boldsymbol{\sigma}^{\e}(\x,z),\nabla f(\x))]\lambda(\d z),\label{365}\\
	\mathscr{L}f(\x):&=-(\mu\A\x+\B(\x)+\beta\mathcal{C}(\x)+\F(\x),\nabla f(\x))+\frac{1}{2}[f''(\x)(\sigma(\x)\otimes\sigma(\x))].\label{366}
\end{align}
An application of  It\^o's formula yields 
\begin{align}
&	f(\u_{\e}(t))-f(\h)-\int_0^t\mathscr{L}^{\e}f(\u_{\e}(s))\d s\nonumber\\&=\int_0^t\int_{\R_0}\left[f(\u_{\e}(s-)+\sigma^{\e}(\u_{\e}(s-),z))-f(\u_{\e}(s-))\right]\wi{\pi}(\d s, \d z)
\end{align}
is a martingale. Let us denote by ${X}_t(\omega):=\omega(t), \ \omega\in\D([0,T];\H),$ the coordinate process on $\D([0,T];\H)$. By the above martingale property, for any $m\in\N$, $0 \leq s_0 < s_1 <\cdots<
s_m \leq  s < t$  and $ f_0, f_1, \ldots, f_m \in\C_b(\H)$ (the collection of bounded continuous functions on
$\H$), it holds that
\begin{align}\label{368}
	\E^{\nu_{\e}}\left[\left(f(X_t)-f(X_s)-\int_s^t\mathscr{L}^{\e}f(X_r)\d r\right)f_0(X_{s_0})\cdots f_m(X_{s_m})\right]=0. 
\end{align}
Let 
\begin{align}\label{369}
	\G_{\e}(\x):=\left|\int_{\R_0}(\sigma^{\e}(\x,z),\boldsymbol{e}_k)(\sigma^{\e}(\x,z),\boldsymbol{e}_j)\lambda(\d z)-(\sigma(\x),\boldsymbol{e}_k)(\sigma(\x),\boldsymbol{e}_j)\right|,
\end{align}
for $\x\in\H$. From \eqref{365} and \eqref{366}, we infer that 
\begin{align}
	\left|\mathscr{L}^{\e}f(X_r)-\mathscr{L}f(X_r)\right|=\G_{\e}(X_r). 
\end{align}
We claim that 
\begin{align}\label{371}
	\lim\limits_{n\to\infty}\E^{\mu_{\e_n}}\left[\int_s^t\left|\mathscr{L}^{\e_n}f(X_r)-\mathscr{L}f(X_r)\right|\d r\right]=0.
\end{align}
It should be noted that 
\begin{align}
	\E^{\mu_{\e_n}}\left[\int_s^t\left|\mathscr{L}^{\e_n}f(X_r)-\mathscr{L}f(X_r)\right|\d r\right]&=\E^{\mu_{\e_n}}\left[\int_s^t\G_{\e_n}(X_r)\d r\right]=\int_s^t\E\left[\G_{\e_n}(\u_{\e_n}(r))\right]\d r,\label{372}\\
	\sup_{0<\e\leq\e_0}\G_{\e}(\x)&\leq C\left(1+\|\x\|_{\H}^2\right). \label{373}
\end{align}
By the dominated convergence theorem and \eqref{3p12}, in order to show \eqref{371}, it suffices to prove that for every $r\in[0,T]$, 
\begin{align}\label{374}
	\lim_{n\to\infty}\E\left[\G_{\e_n}(\u_{\e_n}(r))\right]=0. 
\end{align}
Let us now take any $r\in[0, T ]$ and fix it. Since the finite-dimensional distributions of $\nu^{\e_n}$ converge weakly to that of $\nu_0$, by Skorohod's representation theorem (along a subsequence), we can assume that $\u_{\e_n}(r)$ converges almost surely to an $\H$-valued random variable $\u_0$. As $\{\|\u_{\e_n}(r)\|_{\H}^2\}_{n\geq 1}$ is uniformly integrable (see \eqref{3p12}), we can deduce the existence of a $\u_0\in\mathrm{L}^2(\Omega;\H)$ such that (Theorem 13.7, \cite{DW})
\begin{align}\label{375}
	\lim_{n\to\infty}\E\left[\|\u_{\e_n}(r)-\u_0\|_{\H}^2\right]=0.
\end{align}
By the dominated convergence theorem, it follows from \eqref{372} and \eqref{373} that
\begin{align}
	\lim\limits_{n\to\infty}\E\left[\G_{\e_n}(\u_0)\right]=0. 
\end{align}
Therefore, in order to show \eqref{374}, it is suffices to show that 
\begin{align}\label{377}
	\lim\limits_{n\to\infty}\E\left[\left|\G_{\e_n}(\u_{\e_n}(r))-\G_{\e_n}(\u_0)\right|\right]=0. 
\end{align}
By the definition of $\G_{\e}(\cdot)$ in \eqref{369}, we have 
\begin{align}
&\E\left[\left|\G_{\e_n}(\u_{\e_n}(r))-\G_{\e_n}(\u_0)\right|\right]\nonumber\\&\leq \E\left[\left|\int_{\R_0}(\sigma^{\e_n}(\u_{\e_n}(r),z),\boldsymbol{e}_k)(\sigma^{\e_n}(\u_{\e_n}(r),z),\boldsymbol{e}_j)\lambda(\d z)\right.\right.\nonumber\\&\qquad\left.\left.-\int_{\R_0}(\sigma^{\e_n}(\u_0,z),\boldsymbol{e}_k)(\sigma^{\e_n}(\u_0,z),\boldsymbol{e}_j)\lambda(\d z)\right|\right]\nonumber\\&\quad+\E\left[\left|(\sigma(\u_{\e_n}),\boldsymbol{e}_k)(\sigma(\u_{\e_n}),\boldsymbol{e}_j)-(\sigma(\u_0),\boldsymbol{e}_k)(\sigma(\u_0),\boldsymbol{e}_j)\right|\right]\nonumber\\&:=I_1^n+I_2^n. 
\end{align}
In view of \eqref{32} and \eqref{34}, we obtain 
\begin{align}
	I_1^n&\leq  \E\left[\int_{\R_0}\left|(\sigma^{\e_n}(\u_{\e_n}(r),z),\boldsymbol{e}_k)(\sigma^{\e_n}(\u_{\e_n}(r),z)-\sigma^{\e_n}(\u_0,z),\boldsymbol{e}_j)\right|\lambda(\d z)\right]\nonumber\\&\quad+\E\left[\int_{\R_0}\left|(\sigma^{\e_n}(\u_{\e_n}(r),z)-\sigma^{\e_n}(\u_0,z),\boldsymbol{e}_k)(\sigma^{\e_n}(\u_0,z),\boldsymbol{e}_j)\right|\lambda(\d z)\right]\nonumber\\&\leq\left\{\E\left[\int_{\R_0}\|\sigma^{\e_n}(\u_{\e_n}(r),z)\|_{\H}^2\lambda(\d z)\right]\right\}^{\frac{1}{2}}\left\{\E\left[\int_{\R_0}\|\sigma^{\e_n}(\u_{\e_n}(r),z)-\sigma^{\e_n}(\u_0,z)\|_{\H}^2\lambda(\d z)\right]\right\}^{\frac{1}{2}}\nonumber\\&\quad+\left\{\E\left[\int_{\R_0}\|\sigma^{\e_n}(\u_{\e_n}(r),z)-\sigma^{\e_n}(\u_0,z)\|_{\H}^2\lambda(\d z)\right]\right\}^{\frac{1}{2}}\left\{\E\left[\int_{\R_0}\|\sigma^{\e_n}(\u_0,z)\|_{\H}^2\lambda(\d z)\right]\right\}^{\frac{1}{2}}\nonumber\\&\leq C\left\{\left(1+\E\left[\|\u_0\|_{\H}^2\right]\right)^{\frac{1}{2}}+\left(1+\sup\limits_{\e_n}\E\left[\|\u_{\e_n}(r)\|_{\H}^2\right]\right)^{\frac{1}{2}}\right\}\left\{\E\left[\|\u_{\e_n}(r)-\u^0\|_{\H}^2\right]\right\}^{\frac{1}{2}}.
\end{align}
Taking in account of \eqref{3p12} and \eqref{375}, we infer that $I_1^n\to 0$  as $n\to\infty$. In a similar way, one can show that $I_2^n\to 0$ as $n\to\infty$, so that \eqref{377} follows. Therefore, equation \eqref{371} is validated.

Let us now show that 
\begin{align}\label{380}
	\M_{k,j}(t):=f(X_t)-f(\h)-\int_0^t\mathscr{L}f(X_r)\d r
\end{align}
is a martingale under $\nu_0$. This is equivalent to proving that 
\begin{align}
		\E^{\nu_0}\left[\left(f(X_t)-f(X_s)-\int_s^t\mathscr{L}f(X_r)\d r\right)f_0(X_{s_0})\cdots f_m(X_{s_m})\right]=0. 
\end{align}
Since the finite-dimensional distributions of $\nu^{\e_n}$ converge to that of $\nu_0$, using the fact that  $\|f(\x)\|_{\H}\leq\|\x\|_{\H}^2$ and the uniform energy estimate \eqref{3p12}, it follows from \cite[Theorem 1.6.8]{RD} (or \cite[Lemma 15]{SSS}) that 
\begin{align}\label{382}
	\E^{\nu_0}\left[f(X_t)f_0(X_{s_0})\cdots f_m(X_{s_m})\right]=\lim\limits_{n\to\infty}\E^{\nu_n}\left[f(X_t)f_0(X_{s_0})\cdots f_m(X_{s_m})\right]. 
\end{align}
Let us now show that 
\begin{align}\label{383}
&	\E^{\nu_0}\left[\left(\int_s^t\mathscr{L}f(X_r)\d r \right)f_0(X_{s_0})\cdots f_m(X_{s_m})\right]\nonumber\\&=\lim\limits_{n\to\infty}\E^{\nu_n}\left[\left(\int_s^t\mathscr{L}f(X_r)\d r \right)f_0(X_{s_0})\cdots f_m(X_{s_m})\right]. 
\end{align}
We use \cite[Lemma 15]{SSS} to obtain the above result. One needs to show that 
\begin{align}\label{384}
\sup\limits_{n\geq 1}	\E^{\nu_n}\left[\left|\int_s^t\mathscr{L}f(X_r)\d r \right|^{1+\epsilon}\right]\leq C,
\end{align}
for some $\epsilon>0$. It can be easily seen that 
\begin{align}\label{385}
|(\A\x,\nabla f(\x))|&=|(\x,\boldsymbol{e}_j)(\A\x,\boldsymbol{e}_k)+(\x,\boldsymbol{e}_k)(\A\x,\boldsymbol{e}_j)|\nonumber\\&\leq |(\x,\boldsymbol{e}_j)||(\x,\lambda_k\boldsymbol{e}_k)|+|(\x,\boldsymbol{e}_k)||(\x,\lambda_j\boldsymbol{e}_j)|\nonumber\\&\leq(\lambda_k+\lambda_j)\|\x\|_{\H}^2. 
\end{align}
Using H\"older's and Sobolev's inequalities, we estimate $|\langle\B(\x),\nabla f(\x)\rangle|$ for $s>\frac{d}{2}+1$ as 
\begin{align}\label{386}
|\langle\B(\x),\nabla f(\x)\rangle|&=|(\x,\boldsymbol{e}_j)\langle\B(\x),\boldsymbol{e}_k\rangle+(\x,\boldsymbol{e}_k)\langle\B(\x),\boldsymbol{e}_j\rangle|\nonumber\\&\leq |(\x,\boldsymbol{e}_j)||\langle\B(\x,\boldsymbol{e}_k),\x\rangle|+|(\x,\boldsymbol{e}_k)||\langle\B(\x,\boldsymbol{e}_j),\x\rangle|\nonumber\\&\leq\left( \|\nabla\boldsymbol{e}_k\|_{\wi\L^{\infty}}+\|\nabla\boldsymbol{e}_j\|_{\wi\L^{\infty}}\right)\|\x\|_{\H}^3\leq\left(\|\A^{\frac{s}{2}}\boldsymbol{e}_k\|_{\H}^2+\|\A^{\frac{s}{2}}\boldsymbol{e}_j\|_{\H}^2\right)\|\x\|_{\H}^3\nonumber\\&\leq \left(\lambda_k^{\frac{s}{2}}+\lambda_j^{\frac{s}{2}}\right)\|\x\|_{\H}^3. 
\end{align}
Using H\"older's, Sobolev's and  interpolation inequalities, we estimate $|\langle\mathcal{C}(\x),\nabla f(\x)\rangle|$ for $s>\frac{d}{2}+1$  for $s-1>\frac{d}{2}$ as 
\begin{align}\label{387}
	|\langle\mathcal{C}(\x),\nabla f(\x)\rangle|&=|(\x,\boldsymbol{e}_j)\langle\mathcal{C}(\x),\boldsymbol{e}_k\rangle+(\x,\boldsymbol{e}_k)\langle\mathcal{C}(\x),\boldsymbol{e}_j\rangle|\nonumber\\&\leq\left(\|\boldsymbol{e}_k\|_{\wi\L^{\infty}}+\|\boldsymbol{e}_j\|_{\wi\L^{\infty}}\right)\|\x\|_{\H}\|\x\|_{\wi\L^r}^r\nonumber\\&\leq \left(\|\A^{\frac{s-1}{2}}\boldsymbol{e}_k\|_{\H}+\|\A^{\frac{s-1}{2}}\boldsymbol{e}_j\|_{\H}\right)\|\x\|_{\H}^{\frac{r+1}{r-1}}\|\x\|_{\wi\L^{r+1}}^{\frac{(r-2)(r+1)}{r-1}}\nonumber\\&\leq \left(\lambda_k^{\frac{s-1}{2}}+\lambda_j^{\frac{s-1}{2}}\right)\|\x\|_{\H}^{\frac{r+1}{r-1}}\|\x\|_{\wi\L^{r+1}}^{\frac{(r-2)(r+1)}{r-1}},
\end{align} 
for all $\x\in\wi\L^{r+1}$ and $r\in(2,\infty)$. A calculation similar to \eqref{387} yields  for all $\x\in\wi\L^{r+1}$ and $q\in[2,\infty)$
\begin{align}
	|\langle\widetilde{\mathcal{C}}(\x),\nabla f(\x)\rangle|&\leq\left(\|\boldsymbol{e}_k\|_{\wi\L^{\infty}}+\|\boldsymbol{e}_j\|_{\wi\L^{\infty}}\right)\|\x\|_{\H}\|\x\|_{\wi\L^q}^q\nonumber\\&\leq\left(\lambda_k^{\frac{s-1}{2}}+\lambda_j^{\frac{s-1}{2}}\right)\|\x\|_{\H}^{\frac{3r+1-2q}{r-1}}\|\x\|_{\wi\L^{r+1}}^{\frac{(r+1)(q-2)}{r-1}}.
\end{align}
For $q<r\in[1,2]$, one can estimate for all $\x\in\H$  $$|\langle\mathcal{C}(\x),\nabla f(\x)\rangle|\leq C\left(\lambda_k^{\frac{s-1}{2}}+\lambda_j^{\frac{s-1}{2}}\right)\|\x\|_{\H}^{r+1}.$$ Finally, we estimate $|[f''(\x)(\sigma(\x)\otimes\sigma(\x))]|$ as 
\begin{align}\label{388}
	|[f''(\x)(\sigma(\x)\otimes\sigma(\x))]|&=2|(\sigma(\x),\boldsymbol{e}_j)(\sigma(\x),\boldsymbol{e}_k)|\leq 2\|\sigma(\x)\|_{\H}^2\leq 2C(1+\|\x\|_{\H}^2).
\end{align}
For $q<r\in(2,\infty)$, combining \eqref{385}-\eqref{388}, one can deduce 
\begin{align}
	&\E^{\nu_n}\left[\left|\int_s^t\mathscr{L}f(X_r)\d r \right|^{1+\epsilon}\right]\nonumber\\&=\E\left[\left|\int_s^t\mathscr{L}f(\u_{\e_n}(r))\d r \right|^{1+\epsilon}\right]\nonumber\\&\leq C\left\{\E\left[\left|\int_s^t\langle\A\u_{\e_n}(r),\nabla f(\u_{\e_n}(r))\rangle\d r \right|^{1+\epsilon}\right]+\E\left[\left|\int_s^t\langle\B(\u_{\e_n}(r)),\nabla f(\u_{\e_n}(r))\rangle\d r \right|^{1+\epsilon}\right]\right.\nonumber\\&\quad+\left.\E\left[\left|\int_s^t\langle\mathcal{C}(\u_{\e_n}(r)),\nabla f(\u_{\e_n}(r))\rangle\d r \right|^{1+\epsilon}\right]+\E\left[\left|\int_s^t\langle\widetilde{\mathcal{C}}(\u_{\e_n}(r)),\nabla f(\u_{\e_n}(r))\rangle\d r \right|^{1+\epsilon}\right]\right.\nonumber\\&\left.\quad+\E\left[\left|\int_s^t[f''(\u_{\e_n}(r))(\sigma(\u_{\e_n}(r))\otimes\sigma(\u_{\e_n}(r)))]\d r \right|^{1+\epsilon}\right]\right\}\nonumber\\&\leq C\left\{T^{1+\epsilon}\E\left[\sup_{t\in[0,T]}\|\u_{\e_n}(t)\|_{\H}^{2(1+\epsilon)}\right]+T^{1+\epsilon}\E\left[\sup_{t\in[0,T]}\|\u_{\e_n}(t)\|_{\H}^{3(1+\epsilon)}\right]\right.\nonumber\\&\quad\left.+T^{\frac{1+\epsilon}{r-1}}\left(\E\left[\left(\int_0^T\|\u_{\e_n}(t)\|_{\wi\L^{r+1}}^{r+1}\d t\right)^{1+\epsilon}\right]\right)^{\frac{r-2}{r-1}}\left(\E\left[\sup\limits_{t\in[0,T]}\|\u_{\e_n}(t)\|_{\H}^{(r+1)(1+\epsilon)}\right]\right)^{\frac{1}{r-1}}\right.\nonumber\\&\quad\left.+T^{\frac{(r-q+1)(1+\epsilon)}{r-1}}\left(\E\left[\left(\int_0^T\|\u_{\e_n}(t)\|_{\wi\L^{r+1}}^{r+1}\d t\right)^{1+\epsilon}\right]\right)^{\frac{q-2}{r-1}}\left(\E\left[\sup\limits_{t\in[0,T]}\|\u_{\e_n}(t)\|_{\H}^{\frac{(3r+1-2q)(1+\epsilon)}{r-q+1}}\right]\right)^{\frac{r-q+1}{r-1}}\right.\nonumber\\&\quad\left.+T^{1+\epsilon}\left(1+\E\left[\sup_{t\in[0,T]}\|\u_{\e_n}(t)\|_{\H}^{2(1+\epsilon)}\right]\right)\right\}\leq C,
\end{align}
by using \eqref{3p12}, so that \eqref{384} follows. Since $1\leq q<r$, one can easily see that $\frac{3r+1-2q}{r-q+1}\leq r+1$. An application of \cite[Lemma 15]{SSS} yields \eqref{383}. Using \eqref{382}, \eqref{383}, \eqref{368} and \eqref{371}, we have 
\begin{align}
&	\E^{\nu_0}\left[\left(f(X_t)-f(X_s)-\int_s^t\mathscr{L}f(X_r)\d r\right)f_0(X_{s_0})\cdots f_m(X_{s_m})\right]\nonumber\\&=\lim\limits_{n\to\infty}\E^{\nu_{\e_n}}\left[\left(f(X_t)-f(X_s)-\int_s^t\mathscr{L}f(X_r)\d r\right)f_0(X_{s_0})\cdots f_m(X_{s_m})\right]\nonumber\\&=\lim\limits_{n\to\infty}\E^{\nu_{\e_n}}\left[\left(f(X_t)-f(X_s)-\int_s^t\mathscr{L}^{\e_n}f(X_r)\d r\right)f_0(X_{s_0})\cdots f_m(X_{s_m})\right]\nonumber\\&=0. 
\end{align}
Therefore, $\M_{k,j}(t)$ defined in \eqref{380} is a martingale under $\nu_0$. The case of $q<r\in[1,2]$  can be established in a similar way. 

For $k\in\mathbb{N}$, let $g(\x)=(\x,\boldsymbol{e}_k)$, $\x\in\H$. By a similar argument, one can show that 
\begin{align}\label{391}
	\M_k(t):&=g(X_t)-g(\h)-\int_0^t\mathscr{L}g(X_r)\d r\nonumber\\&=(X_t,\boldsymbol{e}_k)-(\h,\boldsymbol{e}_k)-\mu\int_0^t(X_s,\A\boldsymbol{e}_k)\d s-\int_0^t\langle\B(X_s),\boldsymbol{e}_k\rangle\d s-\alpha\int_0^t\langle\widetilde{\mathcal{C}}(X_s),\boldsymbol{e}_k\rangle\d s\nonumber\\&\quad-\beta\int_0^t\langle\mathcal{C}(X_s),\boldsymbol{e}_k\rangle\d s-\int_0^t(\F(X_s),\boldsymbol{e}_k)\d s
\end{align}
is a martingale under $\nu_0$. 

\vskip 0.2 cm
{\bf S3.} An It\^o's formula together with \eqref{380} and \eqref{391} yield 
\begin{align}
	\langle\M_k,\M_j\rangle(t)=\int_0^t(\sigma(X_s),\boldsymbol{e}_k)(\sigma(X_s),\boldsymbol{e}_j)\d s,
\end{align}
where $\langle\M_k,\M_j\rangle$  represents  the sharp bracket of the two martingales. According to \cite[Lemma A.1]{SSTZ}, there exists a probability space $(\Omega',\mathscr{F}',\mathbb{P}')$ with a filtration $\mathscr{F}'_t$ such that on the standard extension $$(\Omega\times\Omega',\mathscr{F}\times\mathscr{F}',\mathscr{F}_t\times\mathscr{F}_t',\mathbb{P}\times\mathbb{P}')$$ of $(\Omega,\mathscr{F},\mathscr{F}_t,\mathbb{P})$, there exists a one-dimensional Brownian motion $\{\W_t\}_{t\geq 0}$ such that
\begin{align}
	\M_k(t)=\int_0^t(\sigma(X_s),\boldsymbol{e}_k)\d\W(s),
	\end{align}
which means 
\begin{align}
	(X_t,\boldsymbol{e}_k)-(\h,\boldsymbol{e}_k)&=-\mu\int_0^t(X_s,\A\boldsymbol{e}_k)\d s-\int_0^t\langle\B(X_s),\boldsymbol{e}_k\rangle\d s-\alpha\int_0^t\langle\widetilde{\mathcal{C}}(X_s),\boldsymbol{e}_k\rangle\d s\nonumber\\&\quad-\beta\int_0^t\langle\mathcal{C}(X_s),\boldsymbol{e}_k\rangle\d s+\int_0^t(\F(X_s),\boldsymbol{e}_k)\d s+\int_0^t(\sigma(X_s),\boldsymbol{e}_k)\d\W(s),
\end{align}
for every $k\geq 1$. Therefore, under $\nu_0$, $\{X_t\}_{t\geq 0}$ is a solution to the stochastic CBFeD equations \eqref{2p11}. By the uniqueness of the stochastic CBFeD equations, we conclude that $\nu_0=\nu$ completing the proof of the theorem.
\end{proof}
In the next theorem, we remove the restrictions on the noise coefficients and the initial value $\h$.

\begin{theorem}\label{thm3.11}
	Let Hypothesis \ref{hyp1}, \ref{hyp2} and \ref{hyp3} (that is, $(H.1)-(H.4)$) hold, and $\h\in\H$ be given. Then, for any $T > 0$, $\nu_{\e}$ converges	weakly to $\nu$, as $\e\to 0$, on the space $\D([0, T ];\H)$ equipped with the Skorohod topology. 
\end{theorem}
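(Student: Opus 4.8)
The plan is to reduce the general case to Theorem \ref{thm310} by a spectral approximation of the data, combined with a standard three-epsilon argument for weak convergence. Let $P_n\x=\sum_{k=1}^n(\x,\boldsymbol{e}_k)\boldsymbol{e}_k$ denote the projection onto $\mathrm{span}\{\boldsymbol{e}_1,\ldots,\boldsymbol{e}_n\}$, and set $\h_n:=P_n\h\in\V$, $\sigma^{\e}_n:=P_n\sigma^{\e}$ and $\sigma_n:=P_n\sigma$. Since $\|P_n\y\|_{\V}^2\le\lambda_n\|\y\|_{\H}^2$, the regularized coefficients take values in $\V$ and satisfy Hypothesis \ref{hyp4} (with a constant depending on $\lambda_n$); moreover they inherit Hypotheses \ref{hyp2} and \ref{hyp3} from $\sigma^{\e}$ (condition \eqref{34} is preserved because $P_n$ is a contraction, and (H.4) for $\sigma^{\e}_n$ follows from condition (ii) of (H.3) by summing the first $n$ coordinates). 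Let $\u^n_{\e}$ and $\u^n$ be the solutions of \eqref{3p1} and \eqref{2p11} with data $(\h_n,\sigma^{\e}_n)$ and $(\h_n,\sigma_n)$, and write $\nu^n_{\e}$, $\nu^n$ for their laws. Because $\h_n\in\V$ and Hypothesis \ref{hyp4} now holds, Theorem \ref{thm310} applies for each fixed $n$ and yields $\nu^n_{\e}\Rightarrow\nu^n$ on $\D([0,T];\H)$ as $\e\to0$.

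Fix a bounded-Lipschitz metric $d$ metrizing weak convergence of probability measures on $\D([0,T];\H)$. By the triangle inequality $d(\nu_{\e},\nu)\le d(\nu_{\e},\nu^n_{\e})+d(\nu^n_{\e},\nu^n)+d(\nu^n,\nu)$, and since the middle term tends to $0$ as $\e\to0$ for each fixed $n$, we get $\limsup_{\e\to0}d(\nu_{\e},\nu)\le\limsup_{\e\to0}d(\nu_{\e},\nu^n_{\e})+d(\nu^n,\nu)$. Hence it suffices to establish (i) $d(\nu^n,\nu)\to0$ as $n\to\infty$, and (ii) $\lim_{n\to\infty}\limsup_{\e\to0}d(\nu_{\e},\nu^n_{\e})=0$; both will be obtained from $\mathrm{L}^2$ estimates on the pathwise differences, using that the uniform (Skorokhod) distance dominates $d$.

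For (i) I would apply It\^o's formula to $\|\u^n(t)-\u(t)\|_{\H}^2$: the monotonicity \eqref{2.23} of $\mathcal{C}$ and $\widetilde{\mathcal{C}}$ makes their contributions nonpositive, the local Lipschitz bounds \eqref{2p3}, \eqref{lip} on $\B$ together with the uniform estimates of Lemma \ref{lem3.7} and Young's inequality absorb the convective term, and Hypothesis \ref{hyp1} controls $\F$ and the noise. After a Gr\"onwall argument this gives $\E\big[\sup_{0\le t\le T}\|\u^n(t)-\u(t)\|_{\H}^2\big]\le C\big(\|(I-P_n)\h\|_{\H}^2+\E\int_0^T\|(I-P_n)\sigma(\u(s))\|_{\H}^2\,\d s\big)$, which vanishes as $n\to\infty$ by dominated convergence, since $\|(I-P_n)\sigma(\u(s))\|_{\H}\to0$ pointwise and is dominated by $\|\sigma(\u(s))\|_{\H}$.

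Step (ii) is where the main difficulty lies. Running the same energy computation for $\|\u^n_{\e}(t)-\u_{\e}(t)\|_{\H}^2$—again exploiting the monotonicity/Lipschitz structure above and estimating the compensated jump integral by its predictable quadratic variation—reduces the problem to the initial error $\|(I-P_n)\h\|_{\H}^2$ (harmless) and to the \emph{noise-coefficient tail} $R_n^{\e}:=\E\int_0^T\!\!\int_{\R_0}\|(I-P_n)\sigma^{\e}(\u_{\e}(s),z)\|_{\H}^2\,\lambda(\d z)\,\d s$. The crux is to show $\lim_{n\to\infty}\limsup_{\e\to0}R_n^{\e}=0$. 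I would argue this by combining the uniform moment bounds of Lemma \ref{lem3.6} (which, via the higher moment \eqref{33}, give equi-integrability of $\int_{\R_0}\|\sigma^{\e}(\u_{\e},z)\|_{\H}^2\lambda(\d z)$ uniformly in $\e$) with the eigenbasis convergence in Hypothesis \ref{hyp3}: since $\int_{\R_0}(\sigma^{\e}(\u,z),\boldsymbol{e}_k)^2\lambda(\d z)\to(\sigma(\u),\boldsymbol{e}_k)^2$ for each $k$ by (H.3)(ii) while the total energy $\int_{\R_0}\|\sigma^{\e}(\u,z)\|_{\H}^2\lambda(\d z)\to\|\sigma(\u)\|_{\H}^2$ by (H.4), Scheff\'e's lemma forces the tail $\sum_{k>n}\int_{\R_0}(\sigma^{\e}(\u,z),\boldsymbol{e}_k)^2\lambda(\d z)$ to converge, as $\e\to0$, to $\sum_{k>n}(\sigma(\u),\boldsymbol{e}_k)^2$, which tends to $0$ as $n\to\infty$. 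Interchanging these limits with the space-time average against the (tight, uniformly integrable) family $\{\u_{\e}\}$ is the delicate point, and the moment bound \eqref{33} is precisely what legitimizes the interchange. Granting $\lim_n\limsup_{\e\to0}R_n^{\e}=0$, the three-epsilon argument closes: given $\delta>0$ choose $n$ with $d(\nu^n,\nu)<\delta/2$ and $\limsup_{\e\to0}d(\nu_{\e},\nu^n_{\e})<\delta/2$, whence $\limsup_{\e\to0}d(\nu_{\e},\nu)<\delta$, and as $\delta$ is arbitrary we conclude $\nu_{\e}\Rightarrow\nu$ on $\D([0,T];\H)$.
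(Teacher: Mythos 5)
Your overall strategy coincides with the paper's: project $\h$ and the noise coefficients onto $\mathrm{span}\{\boldsymbol{e}_1,\ldots,\boldsymbol{e}_n\}$ so that Theorem \ref{thm310} applies for each fixed $n$, then close a three-epsilon argument via $\mathrm{L}^2$ estimates on the pathwise differences; your verification of (H.5) for $P_n\sigma^{\e}$ and your treatment of the initial-data and $\F$ errors are in line with the paper. There are, however, two genuine gaps.

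First, the crux of your step (ii) is not closed. You reduce to $R_n^{\e}=\E\int_0^T\int_{\R_0}\|(I-P_n)\sigma^{\e}(\u_{\e}(s),z)\|_{\H}^2\lambda(\d z)\d s$ and argue that (H.3)(ii), (H.4) and Scheff\'e give, \emph{for each fixed} $\x\in\H$, the vanishing of $\int_{\R_0}\|(I-P_n)\sigma^{\e}(\x,z)\|_{\H}^2\lambda(\d z)$ in the iterated limit; you then assert that the interchange with the expectation over $\u_{\e}$ is ``legitimized'' by \eqref{33}. Uniform integrability does not suffice: the convergence is pointwise in $\x$ while the law of $\u_{\e}(s)$ moves with $\e$, and no tightness of $\{\u_{\e}\}$ is available under (H.1)--(H.4) alone (Proposition \ref{prop3.9} requires $\h\in\V$ and (H.5)), so there is no fixed reference process against which to integrate. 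The paper resolves precisely this point by splitting the jump term so that the residual $\G_n^{\e}$ is evaluated at $\u_{n,\e}$ rather than at $\u_{\e}$, and then transferring in three steps \eqref{3z1}--\eqref{3z3}: from $\u_{n,\e}(s)$ to $\u_n(s)$ (possible because, for fixed $n$, $\u_{n,\e}\to\u_n$ in law is already known from Theorem \ref{thm310} and is upgraded to $\mathrm{L}^2$ via Skorokhod representation and uniform integrability, combined with the locally Lipschitz property of $\G_n^{\e}$, uniform in $n,\e$, coming from \eqref{34}); then from $\u_n$ to the fixed process $\u$ via \eqref{3102}; and only then is the pointwise limit \eqref{3130} integrated by dominated convergence. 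With the residual sitting at $\u_{\e}$, no such transfer is available, so your argument as written does not close.

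Second, your claim that the convective term is absorbed by ``the local Lipschitz bounds \eqref{2p3}, \eqref{lip} and Young's inequality'' hides a necessary case analysis. The paper writes $\langle\B(\u_{n,\e})-\B(\u_{\e}),\u_{n,\e}-\u_{\e}\rangle=-\langle\B(\u_{n,\e}-\u_{\e},\u_{n,\e}-\u_{\e}),\u_{\e}\rangle$ and absorbs it into the damping produced by the monotonicity of $\mathcal{C}$, which works only for $r>3$ (or $r=3$ with $2\beta\mu>1$). For $d=2$ and $r\in[1,3]$ the resulting Gronwall coefficient involves $\|\u_{\e}(s)\|_{\wi\L^4}^4$, which is not bounded, and the paper must instead apply It\^o's formula to the weighted quantity $e^{-\gamma\int_0^{\cdot}\|\u_{\e}(s)\|_{\wi\L^4}^4\d s}\|\u_{n,\e}(\cdot)-\u_{\e}(\cdot)\|_{\H}^2$. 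Your sketch does not account for this regime.
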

\begin{proof}
	For each $n\in\N$, let $\h_n$, $\F_n (\u)$, $\sigma_n (\u)$, $\sigma_n ^{\e}(\u, z)$  denote the corresponding orthogonal
	projections of $\h$, $\F(\u)$, $\sigma(\u)$, $\sigma^{\e}(\u, z)$ into the $n$-dimensional space $\mathrm{span}\{\boldsymbol{e}_1,\ldots,\boldsymbol{e}_n\}$. Then, for each $n\in\N$, $\{\sigma_n^{\e}\}_{0<\e\leq\e_0}$  and $\F_n$ satisfy Hypothesis $(H.2)-(H.5)$. Furthermore, there is a constant $C>0$ independent	of $n$ such that for every $\u, \u_1, \u_2 \in\H$ 
	\begin{align}\label{395}
		\sup_{n\in\N}\|\F_n(\u)\|_{\H}^2+\sup_{n\in\N}\|\sigma_n(\u)\|_{\H}^2+\sup_{n\in\N,0<\e\leq\e_0}\int_{\R_0}\|\sigma_n^{\e}(\u,z)\|_{\H}^2\lambda(\d z)\leq K_3(1+\|\u\|_{\H}^2)
	\end{align}
where $K_3=\max\{K,K_1\}$ and 
\begin{align}\label{396}
	&\sup_{n\in\N}\|\F_n(\u_1)-\F_n(\u_2)\|_{\H}^2+\sup_{n\in\N}\|\sigma_n(\u_1)-\sigma_n(\u_2)\|_{\H}^2\nonumber\\&\qquad+\sup_{n\in\N,0<\e\leq\e_0}\int_{\R_0}\|\sigma_n^{\e}(\u_1,z)-\sigma_n^{\e}(\u_2,z)\|_{\H}^2\lambda(\d z)\leq L_3\|\u_1-\u_2\|_{\H}^2,
\end{align}
where $L_3=\max\{L_1,L_2\}$. Let $\u_{n,\e}(\cdot)$ and $\u_{n}(\cdot)$ be the solutions of the following stochastic CBFeD equations in $\V'+\wi\L^{\frac{r+1}{r}}$, $\mathbb{P}$-a.s.: 
\begin{align}
	\u_{n,\e}(t)&=\h_n-\mu\int_0^t\A\u_{n,\e}(s)\d s-\int_0^t\B(\u_{n,\e}(s))\d s-\alpha\int_0^t\widetilde{\mathcal{C}}(\u_{n,\e}(s))\d s-\beta\int_0^t\mathcal{C}(\u_{n,\e}(s))\d s\nonumber\\&\quad+\int_0^t\F_n(\u_{n,\e}(s))\d s+\int_0^t\int_{\R_0}\sigma_n^{\e}(\u_{n,\e}(s-),z)\tilde{\pi}(\d s,\d z),\\
	\u_{n}(t)&=\h_n-\mu\int_0^t\A\u_{n}(s)\d s-\int_0^t\B(\u_{n}(s))\d s-\alpha\int_0^t\widetilde{\mathcal{C}}(\u_{n}(s))\d s-\beta\int_0^t\mathcal{C}(\u_{n}(s))\d s\nonumber\\&\quad+\int_0^t\F_n(\u_{n}(s))\d s+\int_0^t\sigma_n(\u_{n}(s))\d\W(s).
\end{align}
By Theorem \ref{thm310},  we have for each $n\in\N$, 
\begin{align}\label{399}
	\u_{n,\e}\to\u_n \ \text{ as }\ \e\to 0\ \text{ in distribution on the space }\  \D([0,T];\H). 
\end{align}
Furthermore, as in the proof of \eqref{3p12}, using \eqref{395}, one can show that 
\begin{align}
	&\sup\limits_{n\in\N,0<\e\leq\e_0} \E\left[\sup_{t\in[0,T]}\|\u_{n,\e}(t)\|_{\H}^{2p}+\left(\int_0^T\|\u_{n,\e}(t)\|_{\V}^2\d t\right)^p+\left(\int_0^T\|\u_{n,\e}(t)\|_{\wi\L^{r+1}}^{r+1}\d t\right)^p\right]<\infty,\label{3100}\\
&	\sup\limits_{n\in\N} \E\left[\sup_{t\in[0,T]}\|\u_{n}(t)\|_{\H}^{2p}+\left(\int_0^T\|\u_{n}(t)\|_{\V}^2\d t\right)^p+\left(\int_0^T\|\u_{n}(t)\|_{\wi\L^{r+1}}^{r+1}\d t\right)^p\right]<\infty,\label{3101}
	\end{align}
where $p=\frac{1+\e}{2}\max\{3,r+1\}$.  We claim that for any $\delta>0$,
\begin{align}
	\lim_{n\to\infty}\mathbb{P}\left\{\sup_{0\leq t\leq T}\|\u_n(t)-\u(t)\|_{\H}>\delta\right\}=0,\label{3102}\\
	\lim_{n\to\infty}\lim_{\e\to 0}\mathbb{P}\left\{\sup_{0\leq t\leq T}\|\u_{n,\e}(t)-\u_{\e}(t)\|_{\H}>\delta\right\}=0.\label{3103}
	\end{align}
We only prove \eqref{3103} here due to its similarity. 

\vskip 0.2 cm
\noindent\textbf{Case 1:} \emph{$d=2,3$ and $r\in(3,\infty)$. } 
Let us first consider the case $d=2,3$ and $r\in(3,\infty)$. An application of infinite dimensional  It\^o's formula to the process $\|\u_{n,\e}(\cdot)-\u_{\e}(\cdot)\|_{\H}^2$ yields  for all $t\in[0,T]$,
\begin{align}\label{3104}
&	\|\u_{n,\e}(t)-\u_{\e}(t)\|_{\H}^2+2\mu\int_0^t\|\u_{n,\e}(s)-\u_{\e}(s)\|_{\H}^2\d s\nonumber\\&\quad+2\beta\int_0^t\langle\mathcal{C}(\u_{n,\e}(s))-\mathcal{C}(\u_{\e}(s)),\u_{n,\e}(s)-\u_{\e}(s)\rangle\d s\nonumber\\&=\|\h_n-\h\|_{\H}^2-2\alpha\int_0^t\langle\widetilde{\mathcal{C}}(\u_{n,\e}(s))-\widetilde{\mathcal{C}}(\u_{\e}(s)),\u_{n,\e}(s)-\u_{\e}(s)\rangle\d s\nonumber\\&\quad-2\int_0^t\langle\B(\u_{n,\e}(s))-\B(\u_{\e}(s)),\u_{n,\e}(s)-\u_{\e}(s)\rangle\d s\nonumber\\&\quad+2\int_0^t\langle\F_n(\u_{n,\e}(s))-\F(\u_{\e}(s)),\u_{n,\e}(s)-\u_{\e}(s)\rangle\d s\nonumber\\&\quad+2\int_0^t\int_{\R_0}(\sigma_n^{\e}(\u_{n,\e}(s-),z)-\sigma^{\e}(\u_{\e}(s-),z),\u_{n,\e}(s-)-\u_{\e}(s-))\tilde{\pi}(\d s,\d z)\nonumber\\&\quad+\int_0^t\int_{\R_0}\|\sigma_n^{\e}(\u_{n,\e}(s),z)-\sigma^{\e}(\u_{\e}(s),z)\|_{\H}^2{\pi}(\d s,\d z), \ \mathbb{P}\text{-a.s.}
\nonumber\\:&=\sum\limits_{k=1}^6I^k_{n,\e}(t).
\end{align}
From \eqref{2.23}, we easily have 
\begin{align}\label{2.27}
	\beta	\langle\mathcal{C}(\u_{n,\e})-\mathcal{C}(\u_{\e}),\u_{n,\e}-\u_{\e}\rangle &\geq \frac{\beta}{2}\||\u_{n,\e}|^{\frac{r-1}{2}}(\u_{n,\e}-\u_{\e})\|_{\H}^2+\frac{\beta}{2}\||\u_{\e}|^{\frac{r-1}{2}}(\u_{n,\e}-\u_{\e})\|_{\H}^2.
\end{align}
Note that $\langle\B(\u_{n,\e},\u_{n,\e}-\u_{\e}),\u_{n,\e}-\u_{\e}\rangle=0$ and it implies that
\begin{equation}\label{441}
	\begin{aligned}
	&	\langle \B(\u_{n,\e})-\B(\u_{\e}),\u_{n,\e}-\u_{\e}\rangle \nonumber\\&=\langle\B(\u_{n,\e},\u_{n,\e}-\u_{\e}),\u_{n,\e}-\u_{\e}\rangle +\langle \B(\u_{n,\e}-\u_{\e},\u_{\e}),\u_{n,\e}-\u_{\e}\rangle \nonumber\\&=\langle\B(\u_{n,\e}-\u_{\e},\u_{\e}),\u_{n,\e}-\u_{\e}\rangle=-\langle\B(\u_{n,\e}-\u_{\e},\u_{n,\e}-\u_{\e}),\u_{\e}\rangle.
	\end{aligned}
\end{equation} 
Using H\"older's and Young's inequalities, we estimate $|\langle\B(\u_{n,\e}-\u_{\e},\u_{n,\e}-\u_{\e}),\u_{\e}\rangle|$ as  
\begin{align}\label{2p28}
	|\langle\B(\u_{n,\e}-\u_{\e},\u_{n,\e}-\u_{\e}),\u_{\e}\rangle|&\leq\|\u_{n,\e}-\u_{\e}\|_{\V}\|\u_{\e}(\u_{n,\e}-\u_{\e})\|_{\H}\nonumber\\&\leq\frac{\mu }{2}\|\u_{n,\e}-\u_{\e}\|_{\V}^2+\frac{1}{2\mu }\|\u_{\e}(\u_{n,\e}-\u_{\e})\|_{\H}^2.
\end{align}
We take the term $\|\u_{\e}(\u_{n,\e}-\u_{\e})\|_{\H}^2$ from \eqref{2p28} and use H\"older's and Young's inequalities to estimate it as 
\begin{align}\label{2.29}
	&\int_{\mathbb{T}^d}|\u_{\e}(x)|^2|\u_{n,\e}(x)-\u_{\e}(x)|^2\d x\nonumber\\&=\int_{\mathbb{T}^d}|\u_{\e}(x)|^2|\u_{n,\e}(x)-\u_{\e}(x)|^{\frac{4}{r-1}}|\u_{n,\e}(x)-\u_{\e}(x)|^{\frac{2(r-3)}{r-1}}\d x\nonumber\\&\leq\left(\int_{\mathbb{T}^d}|\u_{\e}(x)|^{r-1}|\u_{n,\e}(x)-\u_{\e}(x)|^2\d x\right)^{\frac{2}{r-1}}\left(\int_{\mathbb{T}^d}|\u_{n,\e}(x)-\u_{\e}(x)|^2\d x\right)^{\frac{r-3}{r-1}}\nonumber\\&\leq\frac{\beta\mu }{4}\left(\int_{\mathbb{T}^d}|\u_{\e}(x)|^{r-1}|\u_{n,\e}(x)-\u_{\e}(x)|^2\d x\right)+\eta\left(\int_{\mathbb{T}^d}|\u_{n,\e}(x)-\u_{\e}(x)|^2\d x\right),
\end{align}
for $r>3$, where $\eta= \frac{r-3}{r-1}\left(\frac{8}{\beta\mu (r-1)}\right)^{\frac{2}{r-3}}$. Using \eqref{2.29} in \eqref{2p28}, we find 
\begin{align}\label{2.30}
	&|\langle\B(\u_{n,\e}-\u_{\e},\u_{n,\e}-\u_{\e}),\u_{\e}\rangle|\nonumber\\&\leq\frac{\mu }{2}\|\u_{n,\e}-\u_{\e}\|_{\V}^2+\frac{\beta}{8}\||\u_{\e}|^{\frac{r-1}{2}}(\u_{n,\e}-\u_{\e})\|_{\H}^2+\frac{\eta}{2\mu}\|\u_{n,\e}-\u_{\e}\|_{\H}^2.
\end{align}
Let us now consider $\langle\widetilde{\mathcal{C}}(\u_{n,\e})-\widetilde{\mathcal{C}}(\u_{\e}),\u_{n,\e}-\u_{\e}\rangle $ and estimate it using Taylor's formula and H\"older's inequalities as
\begin{align}\label{3p100}
	&\langle\widetilde{\mathcal{C}}(\u_{n,\e})-\widetilde{\mathcal{C}}(\u_{\e}),\u_{n,\e}-\u_{\e}\rangle 
	\nonumber\\&=\int_{\mathbb{T}^d}\left(|\u_{n,\e}(x)|^{q-1}\u_{n,\e}(x)-|\u_{\e}(x)|^{q-1}\u_{\e}(x)\right)\cdot\w_{n,\e}(x)\d x\nonumber\\&=\int_{\mathbb{T}^d}|\u_{n,\e}(x)|^{q-1}|\w_{n,\e}(x)|^2\d x+\int_{\mathbb{T}^d}\left(|\u_{n,\e}(x)|^{q-1}-|\u_{\e}(x)|^{q-1}\right)\u_{\e}(x)\cdot\w_{n,\e}(x)\d x\nonumber\\&=\int_{\mathbb{T}^d}|\u_{n,\e}(x)|^{q-1}|\w_{n,\e}(x)|^2\d x\nonumber\\&\quad+\int_{\mathbb{T}^d}\int_0^1|\theta\u_{n,\e}(x)+(1-\theta)\u_{\e}(x)|^{q-3}\left(\u_{n,\e}(x)+(1-\theta)\u_{\e}(x)\right)\cdot\w_{n,\e}(x)\d\theta\left(\u_{\e}(x)\cdot\w_{n,\e}(x)\right)\d x\nonumber\\&\leq \int_{\mathbb{T}^d}|\u_{n,\e}(x)|^{q-1}|\w_{n,\e}(x)|^2\d x+2^{q-3}\int_{\mathbb{T}^d}\left(|\u_{n,\e}(x)|^{q-1}+|\u_{n,\e}(x)|^{q-2}|\u_{\e}(x)|\right)|\w_{n,\e}(x)|^2\d x\nonumber\\&\leq\left(1+2^{q-2}\right) \int_{\mathbb{T}^d}|\u_{n,\e}(x)|^{q-1}|\w_{n,\e}(x)|^{\frac{2(q-1)}{r-1}}|\w_{n,\e}(x)|^{\frac{2(r-q)}{r-1}}\d x\nonumber\\&\quad+\frac{(q-2)}{(q-1)^{\frac{q-1}{q-2}}}2^{\frac{(q-3)^2}{q-2}}\int_{\mathbb{T}^d}|\u_{\e}(x)|^{q-1}|\w_{n,\e}(x)|^{\frac{2(q-1)}{r-1}}|\w_{n,\e}(x)|^{\frac{2(r-q)}{r-1}}\d x\nonumber\\&\leq \frac{\beta}{4\alpha}\int_{\mathbb{T}^d}|\u_{n,\e}(x)|^{r-1}|\w_{n,\e}(x)|^{2}\d x+\frac{\beta}{8\alpha}\int_{\mathbb{T}^d}|\u_{\e}(x)|^{r-1}|\w_{n,\e}(x)|^{2}\d x+\chi\int_{\mathbb{T}^d}|\w_{n,\e}(x)|^{2}\d x,
\end{align}
for $q\in[3,r)$, where $\w_{n,\e}=\u_{n,\e}-\u_{\e}$ and $$\chi=\left(\frac{r-q}{r-1}\right)\left(\frac{4\alpha(q-1)}{\beta(r-1)}\right)^{\frac{q-1}{r-q}}\left[\left(1+2^{q-2}\right)^{\frac{r-1}{r-q}}+2^{\frac{q-1}{r-q}}\left(\frac{(q-2)}{(q-1)^{\frac{q-1}{q-2}}}2^{\frac{(q-3)^2}{q-2}}\right)^{\frac{r-1}{r-q}}\right].$$ The case of $1\leq q<3$ can be handled in a similar way. Making use of  \eqref{2.27}, \eqref{2.30} and \eqref{3p100} in \eqref{3104}, we deduce for all $t\in[0,T]$,
\begin{align}\label{3109}
	&	\|\u_{n,\e}(t)-\u_{\e}(t)\|_{\H}^2+\mu\int_0^t\|\u_{n,\e}(s)-\u_{\e}(s)\|_{\H}^2\d s+\frac{\beta}{2^{r}}\int_0^t\|\u_{n,\e}(s)-\u_{\e}(s)\|_{\wi\L^{r+1}}^{r+1}\d s\nonumber\\&\leq \|\h_n-\h\|_{\H}^2+\left(\frac{\eta}{\mu}+2\chi\right)\int_0^t\|\u_{n,\e}(s)-\u_{\e}(s)\|_{\H}^2\d s+I_{n,\e}^3(t)+I_{n,\e}^4(t)+I_{n,\e}^5(t), \ \mathbb{P}\text{-a.s.}
\end{align}
Using the Lipschitz continuity of $\F(\cdot)$, we estimate 
\begin{align}\label{3110}
	&\E\left[\sup_{s\in[0,t]}|I_{n,\e}^3(s)|\right]\nonumber\\&\leq 2\E\left[ \int_0^t\left(\|\F_n(\u_{n,\e}(s))-\F(\u_{n,\e}(s))\|_{\H}+\|\F(\u_{n,\e}(s))-\F(\u_{\e}(s))\|_{\H}\right)\|\u_{n,\e}(s)-\u_{\e}(s)\|_{\H}\d s\right]\nonumber\\&\leq C\E\left[\int_0^t\|\u_{n,\e}(s)-\u_{\e}(s)\|_{\H}^2\d s\right]+\E\left[\int_0^t\|\F_n(\u_{n,\e}(s))-\F(\u_{n,\e}(s))\|_{\H}^2\d s\right]. 
\end{align}
Applying  Brukholder-Davis-Gundy's and Young's inequalities, and Hypothesis \ref{hyp2},  we get 
\begin{align}\label{3111}
	&\E\left[\sup_{s\in[0,t]}|I_{n,\e}^4(s)|\right]\nonumber\\&\leq  2\sqrt{3}\E\left[\left(\int_0^t\int_{\R_0}\|\sigma_n^{\e}(\u_{n,\e}(s),z)-\sigma^{\e}(\u_{\e}(s),z)\|_{\H}^2\|\u_{n,\e}(s)-\u_{\e}(s)\|_{\H}^2{\pi}(\d s,\d z)\right)^{\frac{1}{2}}\right]\nonumber\\&\leq \frac{1}{2}\E\left[\sup_{s\in[0,t]}\|\u_{n,\e}(s)-\u_{\e}(s)\|_{\H}^2\right]+6\E\left[\int_0^t\int_{\R_0}\|\sigma_n^{\e}(\u_{n,\e}(s),z)-\sigma^{\e}(\u_{\e}(s),z)\|_{\H}^2\lambda(\d z)\d s\right]\nonumber\\&\leq \frac{1}{2}\E\left[\sup_{s\in[0,t]}\|\u_{n,\e}(s)-\u_{\e}(s)\|_{\H}^2\right]+12L_2\E\left[\int_0^t\|\u_{n,\e}(s)-\u_{\e}(s)\|_{\H}^2\d s\right]\nonumber\\&\quad+12\E\left[\int_0^t\int_{\R_0}\|\sigma_n^{\e}(\u_{n,\e}(s),z)-\sigma^{\e}(\u_{n,\e}(s),z)\|_{\H}^2\lambda(\d z)\d s\right].
\end{align}
A similar calculation as above gives 
\begin{align}\label{3112}
	\E\left[\sup_{s\in[0,t]}|I_{n,\e}^5(s)|\right]&\leq \E\left[\int_0^t\int_{\R_0}\|\sigma_n^{\e}(\u_{n,\e}(s),z)-\sigma^{\e}(\u_{\e}(s),z)\|_{\H}^2\lambda(\d z)\d s\right]\nonumber\\&\leq 2L_2\E\left[\int_0^t\|\u_{n,\e}(s)-\u_{\e}(s)\|_{\H}^2\d s\right]\nonumber\\&\quad+2\E\left[\int_0^t\int_{\R_0}\|\sigma_n^{\e}(\u_{n,\e}(s),z)-\sigma^{\e}(\u_{n,\e}(s),z)\|_{\H}^2\lambda(\d z)\d s\right].
	\end{align}
Combining \eqref{3110}-\eqref{3112} and substituting it in \eqref{3109}, we deduce 
\begin{align}
	&\E\left[\sup_{s\in[0,t]}	\|\u_{n,\e}(s)-\u_{\e}(s)\|_{\H}^2\right]+2\mu\E\left[\int_0^t\|\u_{n,\e}(s)-\u_{\e}(s)\|_{\H}^2\d s\right]\nonumber\\&\quad+\frac{\beta}{2^{r-1}}\E\left[\int_0^t\|\u_{n,\e}(s)-\u_{\e}(s)\|_{\wi\L^{r+1}}^{r+1}\d s\right]\nonumber\\&\leq 2\|\h_n-\h\|_{\H}^2+2\left(\frac{\eta}{\mu}+2\chi+14L_2\right)\E\left[\int_0^t\|\u_{n,\e}(s)-\u_{\e}(s)\|_{\H}^2\d s\right] \nonumber\\&\quad+2\E\left[\int_0^t\|\F_n(\u_{n,\e}(s))-\F(\u_{n,\e}(s))\|_{\H}^2\d s\right]\nonumber\\&\quad+28\E\left[\int_0^t\int_{\R_0}\|\sigma_n^{\e}(\u_{n,\e}(s),z)-\sigma^{\e}(\u_{n,\e}(s),z)\|_{\H}^2\lambda(\d z)\d s\right].
\end{align}
An application of Grownall's inequality yields
\begin{align}\label{3114}
	&\E\left[\sup_{s\in[0,t]}	\|\u_{n,\e}(s)-\u_{\e}(s)\|_{\H}^2\right]\nonumber\\&\leq\left\{2\|\h_n-\h\|_{\H}^2 +2\E\left[\int_0^t\|\F_n(\u_{n,\e}(s))-\F(\u_{n,\e}(s))\|_{\H}^2\d s\right]\right.\nonumber\\&\left.\qquad+28\E\left[\int_0^t\int_{\R_0}\|\sigma_n^{\e}(\u_{n,\e}(s),z)-\sigma^{\e}(\u_{n,\e}(s),z)\|_{\H}^2\lambda(\d z)\d s\right]\right\}e^{2\left(\frac{\eta}{\mu}+2\chi+14L_2\right)T}.
\end{align}
We claim that 
\begin{align}
	\lim_{n\to\infty}\lim_{\e\to 0}\E\left[\int_0^t\|\F_n(\u_{n,\e}(s))-\F(\u_{n,\e}(s))\|_{\H}^2\d s\right]=0,\label{3115}\\
	\lim_{n\to\infty}\lim_{\e\to 0}\E\left[\int_0^t\int_{\R_0}\|\sigma_n^{\e}(\u_{n,\e}(s),z)-\sigma^{\e}(\u_{n,\e}(s),z)\|_{\H}^2\lambda(\d z)\d s\right]=0. \label{3116}
\end{align}
If \eqref{3115} and \eqref{3116} hold true, then from \eqref{3114}, one can deduce that 
\begin{align}\label{3117}
\lim_{n\to\infty}\lim_{\e\to 0}	\E\left[\sup_{s\in[0,t]}	\|\u_{n,\e}(s)-\u_{\e}(s)\|_{\H}^2\right]=0,
\end{align}
and the claim \eqref{3103} follows. It is now only left to show \eqref{3116} as the proof of \eqref{3115} similar and simpler. Let us define 
\begin{align}
	\G_n^{\e}(\x):=\int_{\R_0}\|\sigma_n^{\e}(\x,z)-\sigma^{\e}(\x,z)\|_{\H}^2\lambda(\d z), \ \x\in\H. 
\end{align}
It should be noted that 
\begin{align}\label{3119}
	\sup_{n\in\N}\sup_{0<\e\leq\e_0}\G_n^{\e}(\x)\leq C(1+\|\x\|_{\H}^2). 
\end{align}
Using  \eqref{3p12} and the dominated convergence theorem, in order to prove \eqref{3100}, it suffices to show that for each $s\in [0, T ],$
\begin{align}\label{3120}
	\lim_{n\to\infty}\lim_{\e\to 0}\G_n^{\e}(\u_{n,\e}(s))=0. 
\end{align}

Confirming the three equalities set forth will yield \eqref{3120}.
\begin{align}
	&\lim_{\e\to 0}\G_n^{\e}(\u_{n,\e}(s))=\lim_{\e\to 0}\G_n^{\e}(\u_{n}(s)), \ \text{ for all }\ n\in\N,\label{3z1}\\
	&	\lim_{n\to\infty}\lim_{\e\to 0}\G_n^{\e}(\u_{n}(s))=\lim_{n\to\infty}\lim_{\e\to 0}\G_n^{\e}(\u(s)),\label{3z2}\\
	&\lim_{n\to\infty}\lim_{\e\to 0}\G_n^{\e}(\u(s))=0. \label{3z3}
\end{align}
Let us first prove \eqref{3z1}. Since $\u_{n}(\cdot)$ is a continuous process, due to \eqref{399}, we see that for each $n\in\mathbb{N}$, $s\in[0,T]$, 
\begin{align}
	\u_{n,\e}(s)\to\u_n (s)\ \text{ as }\ \e\to 0\ \text{ in distribution.}
\end{align}
In order to prove \eqref{3z1}, one can use Skorohod's representation theorem to assume that $\|\u_{n,\e}(s)-\u_{n}(s)\|_{\H}^2\to 0$, $\mathbb{P}$-a.s. as $\e\to 0$. In view of \eqref{3100}, $\left\{\|\u_{n,\e}\|_{\H}^2\right\}_{0<\e\leq\e_0}$  is uniformly integrable, and therefore, one can further deduce that
\begin{align}\label{312}
	\lim_{\e\to 0}\E\left[\|\u_{n,\e}(s)-\u_n(s)\|_{\H}^2\right]=0. 
\end{align}
On the other hand,
\begin{align}\label{312a}
	&\E\left[|\G_n^{\e}(\u_{n,\e}(s))-\G_n^{\e}(\u_{n}(s))|\right]\nonumber\\&
	\leq\E\left[\int_{\R_0}\left|\|\sigma_n^{\e}(\u_{n,\e}(s),z)-\sigma^{\e}(\u_{n,\e}(s),z)\|_{\H}^2-\|\sigma_n^{\e}(\u_{n}(s),z)-\sigma^{\e}(\u_{n}(s),z)\|_{\H}^2\right|\lambda(\d z)\right]\nonumber\\&\leq  \E\left[\int_{\R_0}\left|\|\sigma_n^{\e}(\u_{n,\e}(s),z)-\sigma^{\e}(\u_{n,\e}(s),z)\|_{\H}^2-\|\sigma_n^{\e}(\u_{n}(s),z)-\sigma^{\e}(\u_{n}(s),z)\|_{\H}^2\right|\lambda(\d z)\right]\nonumber\\&\leq  \E\left[\int_{\R_0}\left(\|\sigma_n^{\e}(\u_{n,\e}(s),z)-\sigma_n^{\e}(\u_{n}(s),z)\|_{\H}+\|\sigma^{\e}(\u_{n,\e}(s),z)-\sigma^{\e}(\u_{n}(s),z)\|_{\H}\right)\right.\nonumber\\&\left.\qquad\times\left( \|\sigma_n^{\e}(\u_{n,\e}(s),z)-\sigma^{\e}(\u_{n,\e}(s),z)\|_{\H}+\|\sigma_n^{\e}(\u_{n}(s),z)-\sigma^{\e}(\u_{n}(s),z)\|_{\H}\right)\lambda(\d z)\right]\nonumber\\&\leq\left\{\sqrt{2}\E\left[\int_{\R_0}\left(\|\sigma_n^{\e}(\u_{n,\e}(s),z)-\sigma_n^{\e}(\u_{n}(s),z)\|_{\H}^2+\|\sigma^{\e}(\u_{n,\e}(s),z)-\sigma^{\e}(\u_{n}(s),z)\|_{\H}^2\right)\lambda(\d z)\right]^{\frac{1}{2}}\right\}\nonumber\\&\quad\times 2\left\{\E\left[\int_{\R_0}\left(\|\sigma_n^{\e}(\u_{n,\e}(s),z)\|_{\H}^2+\|\sigma^{\e}(\u_{n,\e}(s),z)\|_{\H}^2+\|\sigma_n^{\e}(\u_{n}(s),z)\|_{\H}^2+\|\sigma^{\e}(\u_{n}(s),z)\|_{\H}^2\right)\lambda(\d z)\right]^{\frac{1}{2}}\right\}\nonumber\\&:=I_1^{\e}\times I_2^{\e}. 
\end{align}
Using \eqref{32}, \eqref{395}, \eqref{3100} and \eqref{3101}, we estimate 
\begin{align}\label{312b}
	\sup_{0<\e\leq\e_0}|I_2^{\e}|^2\leq C\sup_{n\in\N,\ 0<\e\leq\e_0}\E\left[1+\|\u_{n,\e}(s)\|_{\H}^2+\|\u_n(s)\|_{\H}^2\right]<\infty. 
\end{align}
Making use of \eqref{34}, \eqref{396} and \eqref{312}, one can estimate $|I_1^{\e}|^2$  as 
\begin{align}\label{312c}
	|I_1^{\e}|^2\leq C\E\left[\|\u_{n,\e}(s)-\u_n(s)\|_{\H}^2\right]\to 0\ \text{ as }\ \e\to 0. 
\end{align}
Therefore, \eqref{3z1} follows from \eqref{312a}, \eqref{312b} and \eqref{312c}. In view of \eqref{3102}, a similar argument as above leads to 
\begin{align}
	\lim_{n\to\infty}\sup_{0<\e\leq\e_0}	\E\left[|\G_n^{\e}(\u_{n}(s))-\G_n^{\e}(\u(s))|\right]=0,
\end{align}
so that \eqref{3z2} holds. Note that Hypothesis \ref{hyp3} (H.4) and the condition (ii) of (H.3) imply
\begin{align}\label{3130}
	&\lim_{n\to\infty}\lim_{\e\to 0}\int_{\R_0}\|\sigma_n^{\e}(\x,z)-\sigma^{\e}(\x,z)\|_{\H}^2\lambda(\d z)\nonumber\\&=\lim_{n\to\infty}\lim_{\e\to 0}\left[\int_{\R_0}\|\sigma^{\e}(\x,z)\|_{\H}^2\lambda(\d z)-\int_{\R_0}\|\sigma_n^{\e}(\x,z)\|_{\H}^2\lambda(\d z)\right]\nonumber\\&=\|\sigma(\x)\|_{\H}^2-\lim_{n\to\infty}\|\sigma_n(\x)\|_{\H}^2=0, \ \text{ for all }\ \x\in\H. 
\end{align}
Therefore, \eqref{3z3}  immediately follows from \eqref{3130} and \eqref{3119} by the dominated convergence theorem. Hence, \eqref{3116}  is proved, and so is \eqref{3117}. 

Finally, we prove that $\nu^{\e}$ converges weakly to $\nu$. Let $\nu^{\e}_n$ and $\nu^n$ denote the laws of $\u_{n,\e}$ and $\u_n$ on $\mathcal{S} := \D([0, T ];\H)$, respectively. Let $G$  be any given bounded, uniformly continuous function on $\mathcal{S}$. For any $n\geq 1$, we write
\begin{align}\label{3121}
	&	\int_{\mathcal{S}}G(\w)\nu^{\e}(\d\w)-\int_{\mathcal{S}}G(\w)\nu(\d\w)\nonumber\\&= 	\int_{\mathcal{S}}G(\w)\nu^{\e}(\d\w)-\int_{\mathcal{S}}G(\w)\nu^{\e}_n(\d\w)+\int_{\mathcal{S}}G(\w)\nu^{\e}_n(\d\w)-\int_{\mathcal{S}}G(\w)\nu_n(\d\w)\nonumber\\&\quad+\int_{\mathcal{S}}G(\w)\nu_n(\d\w)-\int_{\mathcal{S}}G(\w)\nu(\d\w)\nonumber\\&=\E\left[G(\u_{\e})-G(\u_{n,\e})\right]+\left(\int_{\mathcal{S}}G(\w)\nu^{\e}_n(\d\w)-\int_{\mathcal{S}}G(\w)\nu_n(\d\w)\right)+\E\left[G(\u_n)-G(\u)\right].
\end{align}
One can rewrite $\E\left[G(\u_{\e})-G(\u_{n,\e})\right]$ as 
\begin{align}\label{3122}
	\E\left[G(\u_{\e})-G(\u_{n,\e})\right]&=\E\left[G(\u_{\e})-G(\u_{n,\e});\sup_{0\leq t\leq T}\|\u_{n,\e}(t)-\u_{\e}(t)\|_{\H}\leq\delta_1\right]\nonumber\\&\quad+\E\left[G(\u_{\e})-G(\u_{n,\e});\sup_{0\leq t\leq T}\|\u_{n,\e}(t)-\u_{\e}(t)\|_{\H}>\delta_1\right],
\end{align}
for any $\delta_1>0$. Since $G$ is uniformly continuous, given any $\delta>0$, there exists a $\delta_1>0$ such that 
\begin{align}\label{3123}
	\left|\E\left[G(\u_{\e})-G(\u_{n,\e});\sup_{0\leq t\leq T}\|\u_{n,\e}(t)-\u_{\e}(t)\|_{\H}\leq\delta_1\right]\right|\leq\frac{\delta}{4},
\end{align}
for all $n\geq 1$ and $\e>0$. Using the fact that $G(\cdot)$ is bounded and taking in account of  \eqref{3103}, there exists an $n_1$  and then $\e_{n_1}$ such that
\begin{align}\label{3124}
	&	\sup_{0<\e\leq \e_{n_1}}\left|\E\left[G(\u_{\e})-G(\u_{n_1,\e});\sup_{0\leq t\leq T}\|\u_{n_1,\e}(t)-\u_{\e}(t)\|_{\H}>\delta_1\right]\right|\nonumber\\&\leq C\sup_{0<\e\leq \e_{n_1}}\mathbb{P}\left\{\sup_{0\leq t\leq T}\|\u_{n_1,\e}(t)-\u_{\e}(t)\|_{\H}\right\}\leq\frac{\delta}{4}. 
\end{align}
Using \eqref{3122} and \eqref{3123} in \eqref{3124}, we deduce
\begin{align}\label{3125}
	\left|\E\left[G(\u_{\e_{n_1}})-G(\u_{n_1,\e_{n_1}})\right]\right|\leq\frac{\delta}{2}. 
\end{align}
Since $G$ is bounded and uniformly continuous, using \eqref{3102}, we obtain 
\begin{align}\label{3126}
	\left|\E\left[G(\u_{n_1})-G(\u)\right]\right|\leq\frac{\delta}{4}. 
\end{align}
On the other hand, by using \eqref{399}, we obtain the existence of an $\e_1$ such that for $0<\e\leq \e_1$
\begin{align}\label{3127}
	\left|\int_{\mathcal{S}}G(\w)\nu^{\e}_{n_1}(\d\w)-\int_{\mathcal{S}}G(\w)\nu_{n_1}(\d\w)\right|\leq\frac{\delta}{4}.
\end{align}
Putting \eqref{3125}-\eqref{3127} together  in \eqref{3121}, we obtain for $\e\leq\min\{\e_{n_1},\e_1\}$ that 
\begin{align}
	\left|	\int_{\mathcal{S}}G(\w)\nu^{\e}(\d\w)-\int_{\mathcal{S}}G(\w)\nu(\d\w)\right|\leq \delta.
\end{align}
Since $\delta>0$ is arbitrarily small, we deduce 
\begin{align}
	\lim_{\e\to 0}\int_{\mathcal{S}}G(\w)\nu^{\e}(\d\w)=\int_{\mathcal{S}}G(\w)\nu(\d\w),
\end{align}
which completes the proof.

\vskip 0.2 cm
\noindent\textbf{Case 2:} \emph{$d=r=3$ and $2\beta\mu>1$. } 
For the case $d=3$ and $2\beta\mu> 1$, one needs to estimate the terms \eqref{2p28} and \eqref{3p100} only. Rest of the calculations follow as in the previous case. It can be easily seen that 
\begin{align}
|\langle\B(\u_{n,\e}-\u_{\e},\u_{n,\e}-\u_{\e}),\u_{\e}\rangle|&\leq\theta\mu\|\u_{n,\e}-\u_{\e}\|_{\V}^2+\frac{1}{4\theta\mu }\|\u_{\e}(\u_{n,\e}-\u_{\e})\|_{\H}^2,\\
|\langle\widetilde{\mathcal{C}}(\u_{n,\e})-\widetilde{\mathcal{C}}(\u_{\e}),\u_{n,\e}-\u_{\e}\rangle |&\leq\frac{\beta\theta}{\alpha}\|\u_{\e}(\u_{n,\e}-\u_{\e})\|_{\H}^2+\frac{\beta\theta}{\alpha}\|\u_{n,\e}(\u_{n,\e}-\u_{\e})\|_{\H}^2\nonumber\\&\quad+C\|\u_{n,\e}-\u_{\e}\|_{\H}^2,
\end{align}
for some $0<\theta<1$. 
\vskip 0.2 cm
\noindent\textbf{Case 3:} \emph{$d=2$ and $r\in[1,3]$. } 
For the case $d=2$ and $r\in[1,3]$, we apply infinite dimensional It\^o's formula to the process  $e^{-\gamma\int_0^{\cdot}\|\u_{\e}(s)\|_{\wi\L^4}^4\d s}\|\u_{n,\e}(\cdot)-\u_{\e}(\cdot)\|_{\H}^2$ to get 
\begin{align}\label{3p134}
&	e^{-\gamma\int_0^{t}\|\u_{\e}(s)\|_{\wi\L^4}^4\d s}\|\u_{n,\e}(t)-\u_{\e}(t)\|_{\H}^2 +2\mu\int_0^te^{-\gamma\int_0^{s}\|\u_{\e}(r)\|_{\wi\L^4}^4\d r}\|\u_{n,\e}(s)-\u_{\e}(s)\|_{\H}^2\d s\nonumber\\&\quad+2\beta\int_0^te^{-\gamma\int_0^{s}\|\u_{\e}(r)\|_{\wi\L^4}^4\d r}\langle\mathcal{C}(\u_{n,\e}(s))-\mathcal{C}(\u_{\e}(s)),\u_{n,\e}(s)-\u_{\e}(s)\rangle\d s\nonumber\\&=\|\h_n-\h\|_{\H}^2-\gamma\int_0^te^{-\gamma\int_0^{s}\|\u_{\e}(r)\|_{\wi\L^4}^4\d r}\|\u_{\e}(s)\|_{\wi\L^4}^4\|\u_{n,\e}(s)-\u_{\e}(s)\|_{\H}^2\d s\nonumber\\&\quad-2\alpha\int_0^te^{-\gamma\int_0^{s}\|\u_{\e}(r)\|_{\wi\L^4}^4\d r}\langle\widetilde{\mathcal{C}}(\u_{n,\e}(s))-\widetilde{\mathcal{C}}(\u_{\e}(s)),\u_{n,\e}(s)-\u_{\e}(s)\rangle\d s\nonumber\\&\quad-2\int_0^te^{-\gamma\int_0^{s}\|\u_{\e}(r)\|_{\wi\L^4}^4\d r}\langle\B(\u_{n,\e}(s))-\B(\u_{\e}(s)),\u_{n,\e}(s)-\u_{\e}(s)\rangle\d s\nonumber\\&\quad+2\int_0^te^{-\gamma\int_0^{s}\|\u_{\e}(r)\|_{\wi\L^4}^4\d r}\langle\F_n(\u_{n,\e}(s))-\F(\u_{\e}(s)),\u_{n,\e}(s)-\u_{\e}(s)\rangle\d s\nonumber\\&\quad+2\int_0^t\int_{\R_0}e^{-\gamma\int_0^{s}\|\u_{\e}(r)\|_{\wi\L^4}^4\d r}(\sigma_n^{\e}(\u_{n,\e}(s-),z)-\sigma^{\e}(\u_{\e}(s-),z),\u_{n,\e}(s-)-\u_{\e}(s-))\tilde{\pi}(\d s,\d z)\nonumber\\&\quad+\int_0^t\int_{\R_0}e^{-\gamma\int_0^{s}\|\u_{\e}(r)\|_{\wi\L^4}^4\d r}\|\sigma_n^{\e}(\u_{n,\e}(s),z)-\sigma^{\e}(\u_{\e}(s),z)\|_{\H}^2{\pi}(\d s,\d z), \ \mathbb{P}\text{-a.s.}
\end{align}
One can estimate the term $\langle\B(\u_{n,\e})-\B(\u_{\e}),\u_{n,\e}-\u_{\e}\rangle$ by using H\"older's, Ladyzheskaya's and Young's inequalities as 
\begin{align}\label{3p135}
	|\langle\B(\u_{n,\e})-\B(\u_{\e}),\u_{n,\e}-\u_{\e}\rangle|&=|\langle\B(\u_{n,\e}-\u_{\e},\u_{n,\e}-\u_{\e}),\u_{\e}\rangle|\nonumber\\&\leq\|\u_{\e}\|_{\wi\L^4}\|\u_{n,\e}-\u_{\e}\|_{\V}\|\u_{n,\e}-\u_{\e}\|_{\wi\L^4}\nonumber\\&\leq 2^{1/4}\|\u_{\e}\|_{\wi\L^4}\|\u_{n,\e}-\u_{\e}\|_{\V}^{3/2}\|\u_{n,\e}-\u_{\e}\|_{\H}^{1/4}\nonumber\\&\leq\frac{\mu}{2} \|\u_{n,\e}-\u_{\e}\|_{\V}^2+\frac{27}{32\mu^3}\|\u_{\e}\|_{\wi\L^4}^4\|\u_{n,\e}-\u_{\e}\|_{\H}^2. 
\end{align}
Using \eqref{2.27}, \eqref{3p100} and \eqref{3p135} in \eqref{3p134}, we obtain 
\begin{align}
	&	e^{-\gamma\int_0^{t}\|\u_{\e}(s)\|_{\wi\L^4}^4\d s}\|\u_{n,\e}(t)-\u_{\e}(t)\|_{\H}^2 +\mu\int_0^te^{-\gamma\int_0^{s}\|\u_{\e}(r)\|_{\wi\L^4}^4\d r}\|\u_{n,\e}(s)-\u_{\e}(s)\|_{\H}^2\d s\nonumber\\&\quad+\frac{\beta}{2^{r-1}}\int_0^te^{-\gamma\int_0^{s}\|\u_{\e}(r)\|_{\wi\L^4}^4\d r}\|\u_{n,\e}(s)-\u_{\e}(s)\|_{\wi\L^{r+1}}^{r+1}\d s\nonumber\\&\leq\|\h_n-\h\|_{\H}^2+2\int_0^te^{-\gamma\int_0^{s}\|\u_{\e}(r)\|_{\wi\L^4}^4\d r}\langle\F_n(\u_{n,\e}(s))-\F(\u_{\e}(s)),\u_{n,\e}(s)-\u_{\e}(s)\rangle\d s\nonumber\\&\quad+2\int_0^t\int_{\R_0}e^{-\gamma\int_0^{s}\|\u_{\e}(r)\|_{\wi\L^4}^4\d r}(\sigma_n^{\e}(\u_{n,\e}(s-),z)-\sigma^{\e}(\u_{\e}(s-),z),\u_{n,\e}(s-)-\u_{\e}(s-))\tilde{\pi}(\d s,\d z)\nonumber\\&\quad+\int_0^t\int_{\R_0}e^{-\gamma\int_0^{s}\|\u_{\e}(r)\|_{\wi\L^4}^4\d r}\|\sigma_n^{\e}(\u_{n,\e}(s),z)-\sigma^{\e}(\u_{\e}(s),z)\|_{\H}^2{\pi}(\d s,\d z), \ \mathbb{P}\text{-a.s.,}
\end{align}
by choosing $\gamma \geq \frac{27}{16\mu^3}$. The rest of the calculations can be completed as in the case of $d=2,3$ and $r\in(3,\infty)$. 
\end{proof}
	\begin{remark}
		Examples of $\{\sigma^{\e}\}_{\e>0}$ satisfying Hypotheses discussed in section \ref{Sec3} can be obtained from \cite[Section 4]{SSTZ}. 
	\end{remark}
	\medskip\noindent
	{\bf Acknowledgments:}    M. T. Mohan would  like to thank the Department of Science and Technology (DST) Science $\&$ Engineering Research Board (SERB), India for a MATRICS grant (MTR/2021/000066).

%	
%	\medskip\noindent	{\bf  Declaration:} 
%	
%	\noindent 	{\bf  Ethical Approval:}   Not applicable 
%	
%	\noindent  {\bf   Competing interests: } The authors declare no competing interests. 
%	
%%	\noindent 	{\bf   Authors' contributions: } All authors have contributed equally. 
%	
%	\noindent 	{\bf   Funding: } DST-SERB, India, MTR/2021/000066 (M. T. Mohan). 
%	
%	
%	
%	\noindent 	{\bf   Availability of data and materials: } Not applicable. 
%	

\end{document}